\newcommand{\mace}[1]{{{#1}}}
\newcommand{\mna}[1]{{\mathcal{#1}}}
\newcommand{\omace}[1]{\mbox{$\overline{\mace{#1}}$}} 
\newcommand{\umace}[1]{\mbox{$\underline{\mace{#1}}$}} 
\def\Mid#1{#1_c}
\def\Rad#1{#1_\Delta}
\newcommand{\R}[0]{{\mathbb{R}}}
\newcommand{\N}[0]{{\mathbb{N}}}
\def\eps{{\varepsilon}}
\newcommand{\mmid}[0]{;\,}		%pouzivejte v definici mnozin!
\newcommand{\Ss}[0]{\mbox{\large$\Sigma$}}	%solution set
\newcommand{\nic}[1]{{}}
\newcommand{\seznam}[1]{{\{1, \ldots, {#1}\}}}
\def\clqq{``}
\def\crqq{''}
\def\quo#1{\clqq{}#1\crqq{}}  % snadny zapis ang. uvozovek
\newcommand{\st}[0]{{\ \ \mbox{subject to}\ \ }}
\DeclareMathOperator{\sgn}{sgn}	
\def\nref#1{\eqref{#1}}
\newtheorem{theorem}{Theorem}
\newtheorem{proposition}{Proposition}
\newtheorem{lemma}{Lemma}
\newtheorem{corollary}{Corollary}
\theoremstyle{definition}
\begin{document}

\title{Properties of the solution set of absolute value equations and the related matrix classes}

\author{Milan Hlad\'{i}k\thanks{Charles University, Faculty  of  Mathematics  and  Physics,
Department of Applied Mathematics, 
Malostransk\'e n\'am.~25, 11800, Prague, Czech Republic,
  (\href{mailto:hladik@kam.mff.cuni.cz}{hladik@kam.mff.cuni.cz}, \url{https://kam.mff.cuni.cz/\string~hladik}).
  }}

\maketitle

\begin{abstract}
The absolute value equations (AVE) problem is an algebraic problem of solving $Ax+|x|=b$. So far, most of the research focused on methods for solving AVEs, but we address the problem itself by analysing properties of AVE and the corresponding solution set. In particular, we investigate topological properties of the solution set, such as convexity, boundedness, connectedness, or whether it consists of finitely many solutions. Further, we address problems related to nonnegativity of solutions such as solvability or unique solvability. AVE can be formulated by means of different optimization problems, and in this regard we are interested in how the solutions of AVE are related with optima, Karush--Kuhn--Tucker points and feasible solutions of these optimization problems. 

We characterize the matrix classes associated with the above mentioned properties and inspect the computational complexity of the recognition problem; some of the classes are polynomially recognizable, but some others are proved to be NP-hard. For the intractable cases, we propose various sufficient conditions. We also post new challenging problems that raised during the investigation of the problem. 
\end{abstract}

\textbf{Keywords:}\textit{ absolute value equations, linear complementarity problem, special matrices, Interval analysis, NP-hardness.}

\textbf{AMS:}\textit{ 65G40, 90C33, 15Bxx.}

%%%%%%%%%%%%%%%%%%%%%%%%%%%%%%%%%%%%%%%%%%%%%%%%%%%%%%%%%%%%%%% 
% INTRODUCTION
%%%%%%%%%%%%%%%%%%%%%%%%%%%%%%%%%%%%%%%%%%%%%%%%%%%%%%%%%%%%%%% 
\section{Introduction}

The absolute value equations (AVE) problem reads
\begin{align*}%\label{ave}
Ax+|x|=b,
\end{align*}
where $A\in\R^{n\times n}$ and $b\in\R^n$. 
The demonstration of the equivalence of AVE and the linear complementarity problem \cite{Man2007,ManMey2006}, as well as other applications, stimulated intensive research in AVE; see \cite{Man2015,Pro2009,WuLi2018,ZamHla2021a,ZamHla2020ab} among many others. 
The problem is still challenging since many issues related to AVE and computationally hard, e.g., checking solvability of AVE~\cite{Man2007}, or checking uniqueness~\cite{Pro2009}. 

So far, most of the research has been directed towards developing methods for solving AVEs. Our focus is different. Our aim is to inspect the properties of AVE and the solution set. In particular, we want to find out under which conditions the solution set is nonempty, convex, bounded, connected, or it consists of finitely many solutions. We also address the questions regarding nonnegativity of solutions. 

All these properties are reflected in the properties of matrix~$A$. Similarly as for the linear complementarity problem \cite{Cot2010,CotPan2009,Mur1997}, this naturally defines the matrix classes associated with the particular properties of AVE. We characterize these matrix classes and show which are easy to recognize and, in contrast, which are intractable.

%%%
\paragraph{Notation}
$D_v$ is the diagonal matrix with entries given by vector~$v$, $I_n$ is the identity matrix of size $n\times n$, and $e_i$ is the $i$th canonical unit vector (i.e., the $i$th column of $I_n$). The all-ones vector of appropriate length is denoted by~$e$, and $v_I$ stands for the subvector of $v$ defined by the index set~$I$. Given a matrix $A\in\R^{n\times n}$, $\rho(A)$ stands for its spectral radius, $\sigma_{\min}(A)$ for the minimal singular value, $\|A\|_2$ for the spectral norm, $A_{i*}$ is the $i$th row of $A$, and $A_{i_1:i_2,j_1:j_2}$ for the submatrix $(a_{ij})$, where $i_1\leq i\leq i_2$ and $j_1\leq j\leq j_2$. 
The sign of a real $r$ is $\sgn(r)=1$ if $r>0$, $\sgn(r)=-1$ if $r<0$ and $\sgn(r)=0$ if $r=0$.

An interval matrix is defined as a set of matrices $[\umace{A},\omace{A}]=\{A'\in\R^{m\times n}\mmid \umace{A}\leq A'\leq \omace{A}\}$, where the inequality is understood entrywise. Often, we simply write the interval matrix as $[\Mid{A}\pm\Rad{A}]$, where $\Mid{A}=\frac{1}{2}(\umace{A}+\omace{A})$ is the midpoint matrix and $\Rad{A}=\frac{1}{2}(\omace{A}-\umace{A})$ is the radius matrix. 
An interval matrix is called \emph{regular} if every $A\in [\umace{A},\omace{A}]$ is nonsingular; otherwise it is called \emph{irregular}.

%%%%%%%%%%%%%%%%%%%%%%%%%%%%%%%%%%%%%%%%%%%%%%%%%%%%%%%%%%%%%%% 
\section{The solution set and solvability}\label{sSolv}

%%%%%
\paragraph{The solution set}
The solution set of AVE is naturally defined as
\begin{align*}%\label{aveSs}
\Ss=\Ss(b)=\{x\in\R^n\mmid Ax+|x|=b\};
\end{align*}
we will use the notation $\Ss(b)$ if the vector $b$ is subject to changes.

The basic properties of the solution set follow from the orthant decomposition of the space $\R^n$. Recall that the orthant defined by the sign vector $s\in\{\pm1\}^n$ is characterized by $D_sx\geq0$. Within this orthant, we can replace $|x|$ by $D_sx$, so that the solution set is a convex polyhedron there.

\begin{proposition}\label{propDecomp}
The solution set $\Ss$ located in the orthant given by $s\in\{\pm1\}^n$ is characterized by the linear system $(A+D_s)x=b$, $D_sx\geq0$.
\end{proposition}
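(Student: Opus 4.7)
The proposition is essentially a direct translation of the definition of the orthant into an algebraic identity, so the plan is short.

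First, I would unpack the characterization of the orthant: $D_s x \geq 0$ means $s_i x_i \geq 0$ for every coordinate $i$. This is equivalent to saying that $x_i$ and $s_i$ have the same sign (in the weak sense that allows $x_i = 0$). Consequently $|x_i| = s_i x_i$ holds coordinatewise, which I would package as the vector identity $|x| = D_s x$ valid throughout the orthant.

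Second, I would substitute this identity into the defining equation $Ax + |x| = b$ to obtain $(A + D_s)x = b$. Combined with the orthant condition $D_s x \geq 0$, this yields the claimed linear system. Conversely, any $x$ satisfying $(A + D_s)x = b$ and $D_s x \geq 0$ lies in the orthant, so by the same identity $|x| = D_s x$ it satisfies $Ax + |x| = b$ and therefore belongs to $\Sigma$.

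The only point that might look like an obstacle is the boundary of the orthant, where some $x_i$ vanish and are shared with a neighboring orthant associated with a different sign vector $s'$. However, for $x_i = 0$ we trivially have $s_i x_i = 0 = |x_i|$ regardless of the choice of $s_i \in \{\pm 1\}$, so the identity $|x| = D_s x$ continues to hold and the characterization is consistent across boundaries. No further calculation is needed.
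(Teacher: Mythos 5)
Your argument is correct and is exactly the justification the paper gives (informally, in the text preceding the proposition): on the orthant $D_sx\geq0$ one has $|x|=D_sx$, so the defining equation becomes the stated linear system, and the converse follows by the same identity. The extra remarks about both directions and the orthant boundaries are fine but add nothing beyond the paper's one-line reasoning.
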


As a consequence, if finite, then $\Ss$ consists of at most $2^n$ points. 
Figure~\ref{figSolSet} illustrates various shapes of the solution set~$\Ss$.
%\begin{example}
%\end{example}

\begin{figure}[t]
\begin{subfigure}[b]{0.47\textwidth}
\centering
\psset{unit=8ex,arrowscale=1.5}
\begin{pspicture}(-1.3,-1.3)(2.1,2.1)
\psaxes[ticksize=2pt,labels=all,ticks=all, Oy=0, Dx=1, Dy=1, showorigin=false]{->}(0,0)(-1.2,-1.2)(2,2)
\uput[-135](-0.04,-0.04){$0$}
\psline[linewidth=1.5pt,linecolor=blue](1,0)(0,1)
\uput[45](0.51,0.51){\blue$\Ss$}
\end{pspicture}
\caption{%Example of AVE 
$\begin{pmatrix}1&2\\2&1\end{pmatrix}x+|x|=\begin{pmatrix}2\\2\end{pmatrix}$.\label{figSolSet1}}
\end{subfigure}
\hfill
\begin{subfigure}[b]{0.47\textwidth}
\centering
\psset{unit=8ex,arrowscale=1.5}
\begin{pspicture}(-1.3,-1.5)(2.6,1.6)
\psaxes[ticksize=2pt,labels=all,ticks=all, Oy=0, Dx=1, Dy=1, showorigin=false]{->}(0,0)(-1.2,-1.2)(2.5,1.5)
\uput[-135](-0.05,-0.08){$0$}
\psline[linewidth=1.5pt,linecolor=blue](-1.2,-1.2)(0,0)(2.5,0)
\uput[90](1.5,0.1){\blue$\Ss$}
\end{pspicture}
\caption{%Example of AVE 
$\begin{pmatrix}-1&2\\0&1\end{pmatrix}x+|x|=\begin{pmatrix}0\\0\end{pmatrix}$.\label{figSolSet2}}
\end{subfigure}

\begin{subfigure}[b]{0.47\textwidth}
\centering
\psset{unit=8ex,arrowscale=1.5}
\begin{pspicture}(-1.3,-1.3)(2.6,2.8)
\psaxes[ticksize=2pt,labels=all,ticks=all, Oy=0, Dx=1, Dy=1, showorigin=false]{->}(0,0)(-1.2,-1.2)(2.5,2.5)
\uput[-135](-0.04,-0.04){$0$}
\psline[linewidth=1.5pt,linecolor=blue](0,1)(1.4,2.4)
\uput[-45](0.7,1.7){\blue$\Ss$}
\end{pspicture}
\caption{%Example of AVE 
$\begin{pmatrix}1&-2\\2&-3\end{pmatrix}x+|x|=\begin{pmatrix}-2\\-2\end{pmatrix}$.\label{figSolSet3}}
\end{subfigure}
\hfill
\begin{subfigure}[b]{0.47\textwidth}
\centering
\psset{unit=8ex,arrowscale=1.5}
\begin{pspicture}(-1.5,-1.6)(2.6,2.3)
\psaxes[ticksize=2pt,labels=all,ticks=all, Oy=0, Dx=1, Dy=1, showorigin=false]{->}(0,0)(-1.4,-1.4)(2.5,2)
\uput[-135](-0.05,-0.065){$0$}
\psline[linewidth=1.5pt,linecolor=blue](1,0)(2.4,1.4)
\uput[45](0.2,0.2){\blue$\Ss$}
\psset{linecolor=blue}
\qdisk(-1,-1){2pt}
\end{pspicture}
\caption{%Example of AVE 
$\begin{pmatrix}1&-2\\2&-3\end{pmatrix}x+|x|=\begin{pmatrix}2\\2\end{pmatrix}$.\label{figSolSet4}}
\end{subfigure}

\begin{subfigure}[b]{0.47\textwidth}
\centering
\psset{unit=8ex,arrowscale=1.5}
\begin{pspicture}(-2.3,-1.9)(1.6,1.6)
\psaxes[ticksize=2pt,labels=all,ticks=all, Oy=0, Dx=1, Dy=1, showorigin=false]{->}(0,0)(-2.2,-1.6)(1.5,1.4)
\uput[-135](-0.05,-0.065){$0$}
\psline[linewidth=1.5pt,linecolor=blue](-2,-1)(0,-1)(1,0)
\uput[-45](0.5,-0.5){\blue$\Ss$}
\end{pspicture}
\caption{%Example of AVE 
$\begin{pmatrix}1&-2\\0&1\end{pmatrix}x+|x|=\begin{pmatrix}2\\0\end{pmatrix}$.\label{figSolSet5}}
\end{subfigure}
\hfill
\begin{subfigure}[b]{0.47\textwidth}
\centering
\psset{unit=8ex,arrowscale=1.5}
\begin{pspicture}(-2.2,-1.6)(2.2,2.3)
\psaxes[ticksize=2pt,labels=all,ticks=all, Oy=0, Dx=1, Dy=1, showorigin=false]{->}(0,0)(-1.9,-1.4)(1.9,2)
\uput[-135](-0.045,-0.099){$0$}
\psline[linewidth=1.5pt,linecolor=blue](-1.8182,-1.1237)(0,0)(1.1111,1.7978)%(1.1765,1.9036)%(1.25,2.0226)
\uput[-40](1,1.6){\blue$\Ss$}
\end{pspicture}
\caption{%Example of AVE 
$\begin{pmatrix}\sqrt{5}&-2\\2&-\sqrt{5}\end{pmatrix}x+|x|=\begin{pmatrix}0\\0\end{pmatrix}$.\label{figSolSet6}}
\end{subfigure}

\caption{Six examples of various shapes of the solution set of AVE.\label{figSolSet}}
\end{figure}

%%%%%
\paragraph{Solvability} 
First we recall the result on unique solvability for an arbitrary right-hand side. Notice that it is open whether AVEs with this property are efficiently solvable~\cite{GharGil2012}.

\begin{theorem}[\cite{WuLi2018}]
The AVE has a unique solution for each $b\in\R^n$ if and only if $[A\pm I_n]$ is regular.
\end{theorem}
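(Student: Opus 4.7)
The natural approach is to recast uniqueness-for-all-$b$ as a bijectivity statement about the piecewise linear map $F\colon\R^n\to\R^n$ defined by $F(x)=Ax+|x|$. By Proposition~\ref{propDecomp}, the restriction of $F$ to the orthant $\{x\colon D_sx\ge 0\}$ coincides with the linear map $x\mapsto(A+D_s)x$, and the matrices $A+D_s$ for $s\in\{\pm1\}^n$ are exactly the vertex matrices of the interval matrix $[A-I_n,A+I_n]$. So the task reduces to showing that $F$ is a bijection of $\R^n$ if and only if $[A\pm I_n]$ is regular.

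For the sufficiency, I would invoke the classical characterization (due to Rohn) that regularity of $[A\pm I_n]$ is equivalent to all vertex matrices $A+D_s$ being nonsingular with determinants of a common sign. This makes $F$ a \emph{coherently oriented} continuous piecewise linear map, and a standard theorem on such maps (Samelson/Scholtes) guarantees that $F$ is then a global PL homeomorphism of $\R^n$ onto itself, which yields unique solvability for every right-hand side~$b$.

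For the necessity, I would argue by contraposition. Suppose $[A\pm I_n]$ is irregular, so some $B=A-D_d$ with $d\in[-e,e]$ is singular. Pick $u\neq 0$ with $Bu=0$; then $Au=D_du$, so $|Au|\le|u|$ componentwise. Set $v:=Au$. For each coordinate $i$, the scalar function $\varphi_i(t)=|t+u_i|-|t|$ is continuous with range exactly $[-|u_i|,|u_i|]$ (identically zero when $u_i=0$, in which case also $v_i=0$), so one can pick $y_i\in\R$ with $\varphi_i(y_i)=-v_i$. Assembling these gives a vector $y$ with $|y+u|-|y|=-v$, whence
\[
F(y+u)-F(y)=Au+|y+u|-|y|=v-v=0,
\]
and $y+u\ne y$. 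Thus $F$ is not injective, so uniqueness fails for $b:=F(y)$.

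The main technical obstacle is the sufficiency direction: one must legitimately invoke (or re-prove from scratch) both Rohn's vertex-sign characterization of interval regularity and the PL coherent-orientation theorem guaranteeing that a PL self-map of $\R^n$ with pieces of constant-sign determinant is a homeomorphism. The necessity direction is elementary once the right reformulation $|Au|\le|u|$ of singularity of $[A\pm I_n]$ is in hand, and the coordinatewise surjectivity of $\varphi_i$ is the only computation needed.
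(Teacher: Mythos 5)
The paper does not prove this theorem---it is imported verbatim from \cite{WuLi2018}---so there is no internal proof to compare against and your argument has to stand on its own. Your necessity direction does: from a singular $B=A+D\in[A\pm I_n]$ (with $D$ diagonal, $|D|\le I_n$) you extract $u\neq0$ with $|Au|\le|u|$, and since each $\varphi_i(t)=|t+u_i|-|t|$ is continuous with range exactly $[-|u_i|,|u_i|]$, you really can solve $|y+u|-|y|=-Au$ coordinatewise and obtain $F(y+u)=F(y)$ with $y+u\neq y$. This is complete; it is in effect a re-derivation of one half of Rohn's characterization of irregularity via the inequality $|Au|\le|u|$, which the paper itself invokes elsewhere through \cite{Roh2009}.

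The sufficiency direction is where precision is needed. Your chain is: regularity $\Rightarrow$ all $\det(A+D_s)$, $s\in\{\pm1\}^n$, share a nonzero sign (Rohn) $\Rightarrow$ coherent orientation $\Rightarrow$ global homeomorphism. The last implication is \emph{false} for general piecewise affine maps, and even for general conical subdivisions: coherent orientation yields openness and surjectivity but not injectivity (a coherently oriented PL map can have topological degree $2$, e.g.\ a PL model of $z\mapsto z^2$ on eight cones in $\R^2$). What rescues your argument is the specific structure of the pieces: writing $F(x)=(A+I_n)x^+-(A-I_n)x^-$, the selection matrices are obtained columnwise from $A+I_n$ and $A-I_n$, and the statement you need is exactly the Samelson--Thrall--Wesler partition theorem (equivalently the Kuhn--L\"{o}wen bijectivity theorem for $x\mapsto Sx^+-Tx^-$), not a generic coherent-orientation principle. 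Since you name Samelson I take this to be the intended citation, but the write-up must make the reliance on the orthant/columnwise structure explicit, or the step does not follow. Alternatively, sufficiency admits a more elementary route that avoids determinant signs entirely: if $Ax+|x|=Ay+|y|$ then $A(x-y)=|y|-|x|$, and the reverse triangle inequality gives $|A(x-y)|\le|x-y|$, which by Rohn's criterion (regularity of $[A\pm I_n]$ is equivalent to $|Az|\le|z|\Rightarrow z=0$) forces $x=y$; surjectivity then follows from invariance of domain plus properness of $F$ (one has $\|F(x)\|\ge c\|x\|$ with $c=\min_s\sigma_{\min}(A+D_s)>0$), so the image of $F$ is open, closed and nonempty, hence all of $\R^n$.
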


Checking this property is NP-hard since AVE is equivalent to the linear complementarity problem and the unique solvability therein is equivalent to the P-matrix property; see \cite{Cox1994,Roh2012c}.

Regularity of $[A\pm I_n]$ is tractable in some cases. The generally known tractable interval matrix classes are M-matrices, strongly regular matrices and some others; see the overview in~\cite{Hla2020a}. In particular, any of the following two conditions is sufficient to ensure regularity of $[A\pm I_n]$:
\begin{align}
\label{condRegSuff1}
\rho(|A^{-1}|)&<1,\\
\label{condRegSuff2}
\sigma_{\min}(A)&>1.
\end{align}
In both cases, the unique solution can be computed in polynomial time. For the former see \cite{ZamHla2021a}, and for the latter see~\cite{ManMey2006}.

We can show tractability for symmetric interval matrices, too. Recall that a signature of a symmetric matrix is the triple of the numbers of its positive, negative and zero eigenvalues.
We denote by $\lambda_i(B)$ the $i$th largest eigenvalue of a symmetric matrix~$B$.

\begin{proposition}
If $A\in\R^{n\times n}$ is symmetric, then $[A\pm I_n]$ is regular if and only if both matrices $A-I_n$ and $A+I_n$ have the same signature.
\end{proposition}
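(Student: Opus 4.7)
The plan is to reduce the signature equality to an equivalent condition on the eigenvalues of $A$ alone, and then invoke the already stated sufficient condition~\eqref{condRegSuff2} for one implication and exhibit an explicit singular matrix in the interval for the other.

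First I would exploit the fact that the radius matrix $I_n$ is diagonal: every $A'\in[A\pm I_n]$ has the form $A'=A+D_d$ for some $d\in[-1,1]^n$, because off-diagonal entries cannot be perturbed. Since $A$ is symmetric, all these matrices are symmetric, so their non-singularity is equivalent to having no zero eigenvalue.

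Second, I would translate the signature condition. Writing $\lambda_1(A)\ge\dots\ge\lambda_n(A)$ for the eigenvalues of $A$, the eigenvalues of $A-I_n$ and $A+I_n$ are $\lambda_i(A)-1$ and $\lambda_i(A)+1$ respectively. Counting positive, negative and zero eigenvalues, the signatures coincide iff for every index $i$ the two numbers $\lambda_i(A)-1$ and $\lambda_i(A)+1$ carry the same strict sign. A short case analysis on the boundaries $\lambda_i(A)=\pm 1$ (which always unbalance the triples, since a $+1$ eigenvalue contributes to the zero count of $A-I_n$ but to the positive count of $A+I_n$, and symmetrically for $-1$) shows this is equivalent to $|\lambda_i(A)|>1$ for every $i$, i.e.\ $\sigma_{\min}(A)>1$.

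With this reduction, the implication ``same signature $\Rightarrow$ $[A\pm I_n]$ regular'' is precisely the symmetric case of the sufficient condition \eqref{condRegSuff2}. For the converse I would argue by contrapositive: if $\sigma_{\min}(A)\le 1$, pick an eigenvalue $\mu$ of $A$ with $|\mu|\le 1$; then choosing $d=-\mu e\in[-1,1]^n$ gives the matrix $A+D_d=A-\mu I_n\in[A\pm I_n]$, which is singular, so $[A\pm I_n]$ is irregular.

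I don't expect a serious obstacle. The only step that requires a small amount of care is the boundary analysis $\lambda_i(A)=\pm 1$, which is what forces the strict inequality $\sigma_{\min}(A)>1$ rather than the non-strict one; everything else is a direct consequence of the diagonal structure of the perturbation together with \eqref{condRegSuff2}.
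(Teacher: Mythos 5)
Your proof is correct, and it follows the same skeleton as the paper's: both arguments reduce the statement to the claim that regularity of $[A\pm I_n]$ and equality of the signatures are each equivalent to $|\lambda_i(A)|>1$ for every $i$, and both obtain the \quo{regular $\Rightarrow$} direction by exhibiting the singular member $A-\mu I_n\in[A\pm I_n]$ for an eigenvalue $\mu$ with $|\mu|\leq1$. The one place you diverge is the sufficiency direction: the paper invokes the eigenvalue enclosure $\lambda_i(B)\in[\lambda_i(A)-1,\lambda_i(A)+1]$ for symmetric interval matrices from \cite{HlaDan2010}, whereas you route through $\sigma_{\min}(A)>1$ and the already stated condition \eqref{condRegSuff2}; these are interchangeable standard perturbation facts, so neither buys more generality. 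A small point in your favour: you state explicitly that the radius matrix $I_n$ is diagonal, so every member of $[A\pm I_n]$ is of the form $A+D_d$ and hence symmetric. This observation is actually needed to close the gap between \quo{every symmetric member is nonsingular} (which is what the eigenvalue-range argument directly gives) and \quo{every member is nonsingular} (which is what regularity demands); the paper's proof leaves it implicit, and your version makes the proof self-contained on this point. Your boundary analysis at $\lambda_i(A)=\pm1$, forcing the strict inequality, is also carried out correctly.
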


\begin{proof}
By \cite{HlaDan2010}, each eigenvalue of $B\in[A\pm I_n]$ lies in the interval bounds $\lambda_i(B)\in[\lambda_i(A)-1,\lambda_i(A)+1]$. Since every value in $[\lambda_i(A)-1,\lambda_i(A)+1]$ is obtained for a certain matrix $B\in[A\pm I_n]$ (in particular, the value in $\lambda_i(A)-\alpha$ is attained for $A-\alpha I_n$), we have that interval $[\lambda_i(A)-1,\lambda_i(A)+1]$ is the exact range of $i$th eigenvalues of matrices in $[A\pm I_n]$. Thus $[A\pm I_n]$ is regular if and only if no such interval contains zero. 
\end{proof}

%%%%%
%\paragraph{Nonnegativity.}
%%%%%%%%%%%%%%%%%%%%%%%%%%%%%%%%%%%%%%%%%%%%%%%%%%%%%%%%%%%%%%% 
\section{Nonnegativity}

Some issues concerning nonnegativity of the solutions are tractable. Nevertheless, nonnegativity also raises some new open questions.

\begin{proposition}\label{propSolvNonneg}
The AVE has a unique nonnegative solution for each $b\geq0$ if and only if $(A+I_n)^{-1}\geq0$.
%In this case, the nonnegative solution is unique.
\end{proposition}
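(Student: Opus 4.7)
The plan is to apply Proposition \ref{propDecomp} to the nonnegative orthant (where $s=e$) so that a nonnegative solution of the AVE is precisely a nonnegative solution of the linear system $(A+I_n)x=b$. The claim then reduces to: the system $(A+I_n)x=b$ has a unique nonnegative solution for every $b\geq0$ if and only if $(A+I_n)^{-1}\geq0$.

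For the \emph{if} direction, I would just observe that if $(A+I_n)^{-1}\geq0$, then $A+I_n$ is nonsingular, and for every $b\geq0$ the vector $x=(A+I_n)^{-1}b$ is nonnegative and is the unique solution of $(A+I_n)x=b$.

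For the \emph{only if} direction, I would first argue that $A+I_n$ must be nonsingular. Uniqueness at $b=0$ gives that the only nonnegative vector in $\ker(A+I_n)$ is $0$; more importantly, the existence hypothesis says that the image of $\R_+^n$ under $A+I_n$ contains $\R_+^n$. Since $\R_+^n$ spans $\R^n$ (it contains the standard basis), this forces the column space of $A+I_n$ to be all of $\R^n$, hence $A+I_n$ is invertible. Then for each $i$, applying the hypothesis to $b=e_i\geq 0$ yields $(A+I_n)^{-1}e_i\geq0$, i.e., every column of $(A+I_n)^{-1}$ is nonnegative, which is $(A+I_n)^{-1}\geq0$.

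The only mildly subtle point is the passage from the hypothesis (which only constrains the behavior of $A+I_n$ on $\R_+^n$) to full invertibility on $\R^n$; once that is in hand, the rest is immediate from $b=e_i$. No heavy machinery is needed and no interval-analytic tools from the earlier section are invoked.
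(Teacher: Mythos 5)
Your proof is correct and follows essentially the same route as the paper: reduce via Proposition~\ref{propDecomp} to the linear system $(A+I_n)x=b$, $x\geq0$, and then test $b=e_i$ to read off the columns of $(A+I_n)^{-1}$. The only difference is that you supply an explicit justification (surjectivity, since the image of $\R_+^n$ must contain the standard basis) for the nonsingularity of $A+I_n$, a step the paper merely asserts; this is a welcome elaboration rather than a different approach.
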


\begin{proof}
\quo{If.} 
By Proposition~\ref{propDecomp}, nonnegative solutions are characterized by the system 
$(A+I_n)x=b$, $x\geq0$. From this equation we derive $x=(A+I_n)^{-1}b\geq0$, so there is a nonnegative solution and is unique.

\quo{Only if.} 
If $(A+I_n)$ is singular, then there cannot be a unique nonnegative solution for each~$b$. Thus it is nonsingular and suppose to the contrary that $(A+I_n)^{-1}_{ij}<0$ for some $i,j$. Put $b\coloneqq e_j$. Then the vector $x\coloneqq (A+I_n)^{-1}b$ satisfies $x_i<0$, so there is no nonnegative solution; a contradiction.
\end{proof}

\begin{proposition}
The AVE has a nonnegative solution for each $b\geq0$ if and only if it has a unique nonnegative solution for each $b\geq0$.
\end{proposition}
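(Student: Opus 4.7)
The ``if'' direction is immediate, so the content lies in showing that existence of a nonnegative solution for every $b\geq 0$ forces uniqueness of that solution. My plan is to reduce the problem to the linear system governing nonnegative solutions and then invoke Proposition~\ref{propSolvNonneg}.

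By Proposition~\ref{propDecomp} (applied with $s=e$), $x\geq 0$ solves the AVE if and only if $(A+I_n)x=b$ and $x\geq 0$. Write $M\coloneqq A+I_n$. The hypothesis then translates to
\begin{align*}
\R^n_+\subseteq M\R^n_+,
\end{align*}
i.e., the image of the nonnegative orthant under $M$ contains the whole nonnegative orthant.

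First I would argue that $M$ must be nonsingular. Indeed, $\R^n_+$ has nonempty interior and hence spans $\R^n$; so if $M\R^n_+\supseteq \R^n_+$, then the linear span of $M\R^n_+$ is all of $\R^n$, which forces $\rank(M)=n$. Next, specializing the hypothesis to the canonical vectors $b=e_j$ for $j=1,\dots,n$, there exist $x^{(j)}\geq 0$ with $Mx^{(j)}=e_j$. Since $M$ is nonsingular, $x^{(j)}=M^{-1}e_j$, which is the $j$th column of $M^{-1}$. Hence every column of $M^{-1}$ is nonnegative, that is $(A+I_n)^{-1}\geq 0$. Proposition~\ref{propSolvNonneg} then yields that the AVE has a unique nonnegative solution for each $b\geq 0$, completing the proof.

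I do not expect a real obstacle here: the only mild subtlety is ensuring $M$ is nonsingular before identifying the preimages of the $e_j$ with the columns of $M^{-1}$, and this follows from a dimension count on $M\R^n_+$.
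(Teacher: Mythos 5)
Your proof is correct, but it takes a genuinely different route from the paper. Both arguments reduce the claim to showing $(A+I_n)^{-1}\geq0$ and then invoke Proposition~\ref{propSolvNonneg}, but the paper gets there by duality: it applies the Farkas lemma to the feasibility of $(A+I_n)x=b$, $x\geq0$, concludes that the polyhedral cone $\{y\mmid (A+I_n)^Ty\geq0\}$ lies in the nonnegative orthant, deduces nonsingularity from the fact that such a cone cannot contain a line, and identifies the edges of that cone with the columns of $(A+I_n)^{-T}$. You instead work entirely on the primal side: nonsingularity of $M=A+I_n$ follows from the span/dimension count on $M\R^n_+\supseteq\R^n_+$, and nonnegativity of $M^{-1}$ follows by testing the hypothesis on the canonical vectors $b=e_j$. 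Your argument is the more elementary of the two --- it needs no Farkas lemma and no facts about edges of polyhedral cones --- and both steps are airtight (the span of $M\R^n_+$ is contained in the range of $M$ and contains $\R^n$, so $M$ has full rank; once $M$ is invertible, the nonnegative preimage of $e_j$ is necessarily the $j$th column of $M^{-1}$). What the paper's dual formulation buys is mainly a geometric picture of the same fact; for this particular statement it offers no extra generality.
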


\begin{proof}
The \quo{If} part is obvious, so we focus on the \quo{Only if} part. By the assumption, for each $b\geq0$, the system
\begin{align*}
(A+I_n)x=b,\ x\geq0
\end{align*}
is feasible. By the Farkas lemma~\cite{Schr1998}, there is no $y$ such that
\begin{align*}
(A+I_n)^Ty\geq0,\ b^Ty<0.
\end{align*}
That is, for every $b\geq0$ and for every $y$ such that $(A+I_n)^Ty\geq0$ we have $b^Ty\geq0$. Therefore the convex polyhedral cone described by $(A+I_n)^Ty\geq0$ lies in the nonnegative orthant. This cannot happen when $A+I_n$ is singular (in which case the cone contains a whole line), so it is nonsingular. 
%in fact, it contains the kernel of $(A+I_n)^T$
We have in particular that all edges of the cone are nonnegative. Since these edges are the columns of $(A+I_n)^{-T}$, we get $(A+I_n)^{-T}\geq0$. Eventually, apply Proposition~\ref{propSolvNonneg}.
\end{proof}

A natural question appears here whether unique solvability of AVE for each $b\geq0$ (or $b\leq0$) is sufficient for unique solvability of AVE for every~$b$; it remains open. 

Proposition~\ref{propSolvNonneg} can easily be extended to other orthants, too. In total, it yields the following sufficient condition for solvability.

\begin{proposition}\label{propSolSufOrth}
The AVE has a solution for each $b\geq0$ if $(AD_s+I_n)^{-1}\geq0$ for some $s\in\{\pm1\}^n$. 
In which case, there is a unique solution in the orthant $D_sx\geq0$. 
\end{proposition}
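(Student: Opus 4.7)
The plan is to reduce the claim to Proposition~\ref{propSolvNonneg} by a linear change of variables that maps the orthant $D_s x \geq 0$ onto the nonnegative orthant, after which the statement becomes the one already proved.

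First I would invoke Proposition~\ref{propDecomp}: the solutions of AVE lying in the orthant prescribed by $s \in \{\pm 1\}^n$ are exactly the $x$ satisfying
\begin{align*}
(A + D_s)x = b, \quad D_s x \geq 0.
\end{align*}
Next I would perform the substitution $y \coloneqq D_s x$, so that $x = D_s y$ since $D_s^2 = I_n$. Plugging in, the equation $(A+D_s)D_s y = b$ simplifies to $(A D_s + I_n) y = b$, and the sign constraint $D_s x \geq 0$ becomes simply $y \geq 0$. Hence looking for a solution of AVE in the orthant $s$ is equivalent to looking for a nonnegative solution of the linear system $(A D_s + I_n) y = b$.

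From here the argument mirrors the \quo{If} direction of Proposition~\ref{propSolvNonneg}. The hypothesis $(A D_s + I_n)^{-1} \geq 0$ implicitly requires $A D_s + I_n$ to be nonsingular, so $y = (A D_s + I_n)^{-1} b$ is the unique solution of the linear system. Since $(A D_s + I_n)^{-1} \geq 0$ and $b \geq 0$, we have $y \geq 0$, which gives a valid solution $x = D_s y$ of AVE in the orthant~$s$, and it is the unique one there.

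There is no real obstacle; the only subtlety worth flagging in the write-up is that the uniqueness claim is local to the orthant $D_s x \geq 0$ — the proposition does not assert, and the hypothesis does not entail, that AVE has no other solutions in other orthants.
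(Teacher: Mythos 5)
Your proof is correct and follows essentially the same route as the paper: both reduce to the orthant system $(A+D_s)x=b$, $D_sx\geq0$ via Proposition~\ref{propDecomp} and exhibit the solution $x=D_s(AD_s+I_n)^{-1}b$, which is nonnegative in the sense $D_sx\geq 0$ because $(AD_s+I_n)^{-1}\geq0$ and $b\geq0$. Your explicit substitution $y=D_sx$ and your remark that nonsingularity of $AD_s+I_n$ gives uniqueness within the orthant make the argument, if anything, slightly more complete than the paper's.
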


\begin{proof}
Suppose that $(AD_s+I_n)^{-1}\geq0$ for some $s\in\{\pm1\}^n$. By Proposition~\ref{propDecomp}, solutions lying in the orthant $D_sx\geq0$ are characterized by the system 
$(A+D_s)x=b$, $D_sx\geq0$. Since $(AD_s+I_n)^{-1}\geq0$ and $b\geq0$, we have that the vector defined by $x^b\coloneqq D_s(AD_s+I_n)^{-1}b$ lies in the orthant $D_sx\geq0$. We also verify that it solves the AVE since 
$Ax^b+|x^b|
=(A+D_s)x^b
=(A+D_s)D_s(AD_s+I_n)^{-1}b=b$.
\end{proof}

Checking the condition from Proposition~\ref{propSolSufOrth} might not be easy; we suspect that the problem is NP-hard. It becomes tractable in certain cases, e.g., when $A$ has rank one; in this can we simply use the Sherman--Morrison formula.

Anyway, a more efficiently computable condition is of interest. Below, we state one such condition. It is more general than that from \cite[Prop.~5]{ManMey2006}, which employs a matrix norm instead of the spectral radius. 

\begin{corollary}\label{corSolNonnegRho}
Let $\rho(A)<1$ and $A\leq0$. Then the AVE has a unique nonnegative solution for each $b\geq0$.
\end{corollary}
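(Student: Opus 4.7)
The plan is to reduce the statement to the criterion provided by Proposition~\ref{propSolvNonneg}, which tells us that unique nonnegative solvability for all $b\geq0$ is equivalent to $(A+I_n)^{-1}\geq 0$. So the whole task becomes showing that, under the assumptions $\rho(A)<1$ and $A\leq 0$, the matrix $(A+I_n)^{-1}$ exists and is entrywise nonnegative.

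For existence, note that $\rho(A)<1$ in particular excludes $-1$ from the spectrum of $A$, so $A+I_n$ is nonsingular. For the sign pattern, I would use a Neumann series expansion: since the eigenvalues of $-A$ are the negatives of those of $A$, one has $\rho(-A)=\rho(A)<1$, and therefore
\begin{align*}
(A+I_n)^{-1} \;=\; \bigl(I_n-(-A)\bigr)^{-1} \;=\; \sum_{k=0}^{\infty}(-A)^k.
\end{align*}
The hypothesis $A\leq 0$ gives $-A\geq 0$, hence every power $(-A)^k$ is a product of nonnegative matrices and is itself nonnegative. The (convergent) sum of entrywise nonnegative matrices is nonnegative, so $(A+I_n)^{-1}\geq 0$.

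Once this is established, Proposition~\ref{propSolvNonneg} applies directly and yields the conclusion. There is no real obstacle here; the only point worth being careful about is the justification $\rho(-A)=\rho(A)$ (so that the Neumann series converges), which is immediate from the fact that the spectra of $A$ and $-A$ differ only by a sign.
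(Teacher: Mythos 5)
Your proposal is correct and follows essentially the same route as the paper: both reduce the claim to Proposition~\ref{propSolvNonneg} by establishing $(A+I_n)^{-1}\geq0$. The paper does this by observing that $A+I_n$ is an M-matrix, while you unfold the standard Neumann-series argument $(A+I_n)^{-1}=\sum_{k=0}^{\infty}(-A)^k\geq0$ that underlies that very fact, so the two proofs differ only in whether the M-matrix machinery is cited or spelled out.
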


\begin{proof}
From the asumptions, $A+I_n$ is an M-matrix, so Proposition~\ref{propSolvNonneg} applies.
\end{proof}

Now, we present an extension of Corollary~\ref{corSolNonnegRho} to cover all orthants. Notice that checking the assumptions of Proposition~\ref{propSolvNonnegRhoOrth} is an easy task.

\begin{proposition}\label{propSolvNonnegRhoOrth}
Let $\rho(A)<1$ and $AD_s\leq0$ for some $s\in\{\pm1\}^n$. Then the AVE has a solution for each $b\geq0$. In which case, there is a unique solution in the orthant $D_sx\geq0$. 
\end{proposition}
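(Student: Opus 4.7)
The plan is to reduce the claim to Proposition~\ref{propSolSufOrth} by verifying its sole hypothesis $(AD_s+I_n)^{-1}\geq 0$. Once this is in hand, that proposition immediately supplies both conclusions: existence of a solution of AVE for every $b\geq 0$, and uniqueness within the orthant $D_sx\geq 0$ by means of the explicit formula $x = D_s(AD_s+I_n)^{-1}b$.

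To verify the nonnegativity of the inverse, I would set $M := -AD_s$. The sign hypothesis $AD_s\leq 0$ rewrites as $M\geq 0$, and the target matrix becomes $AD_s+I_n = I_n - M$. This places us in the classical M-matrix framework: provided $\rho(M)<1$, the Neumann series
\begin{align*}
(I_n-M)^{-1} = \sum_{k=0}^{\infty} M^k
\end{align*}
converges entrywise and, as a sum of nonnegative terms, yields $(I_n-M)^{-1}\geq 0$. This mirrors exactly the mechanism driving Corollary~\ref{corSolNonnegRho}, with $AD_s$ now playing the role that $A$ played there.

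The main bookkeeping step is the spectral transfer from $A$ to $M$. Since $|D_s|=I_n$, the hypothesis $AD_s\leq 0$ implies the entry-wise identity $|A| = -AD_s = M$, whence $\rho(M) = \rho(|A|)$. In the special case $s=e$ of Corollary~\ref{corSolNonnegRho} this reduces to $\rho(|A|) = \rho(-A) = \rho(A) < 1$, which is why $\rho(A)<1$ suffices there; for general $s$ the same identification $|A|=M$ combined with the spectral hypothesis on $A$ should close the argument. I expect this spectral-radius transfer to be the only nontrivial point; once $\rho(M)<1$ is secured, the Neumann series argument is automatic and Proposition~\ref{propSolSufOrth} concludes the proof, with the explicit orthant solution $x = D_s(AD_s+I_n)^{-1}b$ satisfying $D_sx = (AD_s+I_n)^{-1}b\geq 0$ by the nonnegativity of both factors.
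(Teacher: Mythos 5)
Your route is essentially the paper's own: the paper restricts to the orthant $D_sx\geq0$, substitutes $y=D_sx$ to obtain $(AD_s+I_n)y=b$, $y\geq0$, and invokes Corollary~\ref{corSolNonnegRho} for the matrix $AD_s$; your verification of $(AD_s+I_n)^{-1}\geq0$ via the Neumann series is exactly the M-matrix mechanism behind that corollary, so the two arguments coincide in substance. The problem is the step you yourself single out as the only nontrivial point and then leave with ``should close the argument.'' You correctly identify $M=-AD_s=|A|$, so what you need is $\rho(|A|)<1$, equivalently $\rho(AD_s)<1$; but the hypothesis supplies only $\rho(A)<1$, and this does \emph{not} imply $\rho(|A|)<1$. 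The identity $\rho(A)=\rho(-A)=\rho(|A|)$ that rescues the case $s=e$ (and $s=-e$) uses that $A$ and $-A$ have eigenvalues of equal modulus; for general $s$ one would need the spectra of $A=-|A|D_s$ and of $|A|$ to have the same moduli, which is false.

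The gap is genuine and cannot be closed as stated. Take
\begin{align*}
A=\begin{pmatrix}-0.9 & 0.9\\ -0.9 & 0.9\end{pmatrix},\qquad s=(1,-1)^T.
\end{align*}
Here $A$ is nilpotent, so $\rho(A)=0<1$, and $AD_s=-0.9\,ee^T\leq0$, yet $\rho(AD_s)=\rho(|A|)=1.8>1$ and
\begin{align*}
(AD_s+I_2)^{-1}=\frac{1}{-0.8}\begin{pmatrix}0.1 & 0.9\\ 0.9 & 0.1\end{pmatrix}\leq 0,
\end{align*}
so the Neumann series diverges and the hypothesis of Proposition~\ref{propSolSufOrth} fails. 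Worse, for $b=e$ the unique candidate $y=(AD_s+I_2)^{-1}e=(-1.25,-1.25)^T$ is not nonnegative, so there is \emph{no} solution in the orthant $D_sx\geq0$ (the AVE is still solvable for this $b$, namely by $x=(1,1)^T$ in the positive orthant, but not where the proposition locates it). For what it is worth, the paper's one-line proof makes the same silent leap, applying Corollary~\ref{corSolNonnegRho} to $AD_s$ without checking $\rho(AD_s)<1$; your argument (and the paper's) becomes correct if the hypothesis is strengthened to $\rho(|A|)<1$, which coincides with $\rho(A)<1$ precisely when $s=\pm e$.
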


\begin{proof}
In the orthant $D_sx\geq0$, the solutions are characterized by the system 
$(A+D_s)x=b$, $D_sx\geq0$. Substitute $y\equiv D_sx$ to obtain $(AD_s+I_n)y=b$, $y\geq0$. 
Now, apply Corollary~\ref{corSolNonnegRho}. 
\end{proof}

%%%%%
\paragraph{Inverse nonnegative matrices} 
Recall that an interval matrix $[\umace{A},\omace{A}]$ is \emph{inverse nonnegative} if $A^{-1}\geq0$ for each $A\in[\umace{A},\omace{A}]$. By the theorem of Kuttler \cite{Kut1971}, an interval matrix $[\umace{A},\omace{A}]$ is inverse nonnegative if and only if $\umace{A}^{-1}\geq0$ and $\omace{A}^{-1}\geq0$. We will also need the following lemma.

\begin{lemma}\label{lmmKuttlerExt}
%If $[\umace{A},\omace{A}]$ is regular and $\omace{A}^{-1}\geq0$, then $\umace{A}^{-1}\geq0$.
If $[\umace{A},\omace{A}]$ is regular and $\omace{A}^{-1}\geq0$, then $[\umace{A},\omace{A}]$ is inverse nonnegative.
\end{lemma}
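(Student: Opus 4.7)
My strategy is to reduce to Kuttler's theorem by establishing $\umace{A}^{-1}\geq 0$, and to extract this nonnegativity from a Neumann series expansion of $\umace{A}^{-1}$ centred at $\omace{A}$.

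Writing $C\coloneqq\omace{A}-\umace{A}\geq 0$ and $M\coloneqq\omace{A}^{-1}C$, the matrix $M$ is nonnegative as a product of two nonnegative matrices, and I obtain the factorisation
\[
\umace{A}=\omace{A}-C=\omace{A}(I_n-M).
\]
Thus it is enough to show that $(I_n-M)^{-1}$ exists and is nonnegative; multiplying on the right by $\omace{A}^{-1}\geq 0$ then yields $\umace{A}^{-1}\geq 0$ as a product of nonnegative matrices, and Kuttler's theorem concludes the proof.

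The heart of the argument — and the step I expect to be the main obstacle — is showing $\rho(M)<1$. To this end I look at the convex combinations $A(t)\coloneqq(1-t)\omace{A}+t\umace{A}=\omace{A}(I_n-tM)$ for $t\in[0,1]$. Each such $A(t)$ sits entrywise between $\umace{A}$ and $\omace{A}$, so it belongs to $[\umace{A},\omace{A}]$ and is nonsingular by regularity. Consequently $I_n-tM$ is nonsingular for every $t\in[0,1]$.

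Because $M\geq 0$, Perron--Frobenius guarantees that $\rho(M)$ is itself an eigenvalue of $M$. If one had $\rho(M)\geq 1$, then $t^*\coloneqq 1/\rho(M)\in(0,1]$ would make $I_n-t^*M$ singular, contradicting the previous paragraph. Hence $\rho(M)<1$, the Neumann series $(I_n-M)^{-1}=\sum_{k\geq 0}M^k$ converges and is nonnegative, and the proof is complete.
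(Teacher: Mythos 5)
Your proof is correct, but it takes a genuinely different route from the paper's. The paper argues directly that every $A\in[\umace{A},\omace{A}]$ has $A^{-1}\geq0$: since the partial derivatives of the entries of $A^{-1}$ with respect to the entries of $A$ are nonpositive whenever $A^{-1}\geq0$, and regularity keeps the whole box nonsingular, the inverse stays nonnegative as one moves down from $\omace{A}$ — a monotonicity argument that never invokes Kuttler's theorem. You instead target only the lower endpoint: the factorisation $\umace{A}=\omace{A}(I_n-M)$ with $M=\omace{A}^{-1}(\omace{A}-\umace{A})\geq0$, the Perron--Frobenius fact that $\rho(M)$ is an eigenvalue of the nonnegative matrix $M$, and nonsingularity of $I_n-tM$ along the segment $t\in[0,1]$ together force $\rho(M)<1$; the Neumann series then gives $(I_n-M)^{-1}\geq0$, hence $\umace{A}^{-1}=(I_n-M)^{-1}\omace{A}^{-1}\geq0$, and Kuttler's theorem finishes. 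Your version is more elementary and self-contained (no differentiation of the matrix inverse), and it actually uses less than full regularity — only nonsingularity of the convex combinations of the two endpoints — delegating the rest of the interval to Kuttler; the paper's version is shorter but leans on an external calculus fact and an implicit continuity argument. Every step of yours checks out.
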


\begin{proof}
Since $[\umace{A},\omace{A}]$ is regular, $\omace{A}^{-1}\geq0$ and the derivatives of $\omace{A}^{-1}$ with respect to the matrix entries are nonpositive~\cite{PetPed2012}, we have that $A^{-1}\geq0$ for every $A\in[\umace{A},\omace{A}]$.
\end{proof}

Since inverse nonnegative interval matrices are regular, the corresponding AVE has a unique solution for each $b\in\R^n$. If $b\geq0$, then we have an explicit formula for it, $x=(A+I_n)^{-1}b\geq0$. Thus: 

\begin{proposition}\label{propSolvInvNonnegNonneg}
If $[A\pm I_n]$ is inverse nonnegative, then for each $b\geq0$ the AVE has a unique solution and this solution is nonnegative.
\end{proposition}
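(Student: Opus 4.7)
The plan is to combine two earlier results: the unique solvability of AVE under regularity of $[A\pm I_n]$ (the Wu--Li theorem) and the explicit formula for the nonnegative solution from Proposition~\ref{propSolvNonneg}. First I would note that any inverse nonnegative interval matrix is automatically regular, since invertibility of each $M\in[A-I_n,A+I_n]$ is built into the definition. Hence by the Wu--Li theorem the AVE admits a unique solution $x\in\R^n$ for every $b\in\R^n$, and in particular for every $b\geq 0$.

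Next I would produce this unique solution explicitly in the nonnegative orthant. Since $[A\pm I_n]$ is inverse nonnegative, in particular $(A+I_n)^{-1}\geq 0$, so the hypothesis of Proposition~\ref{propSolvNonneg} is met. That proposition then guarantees a nonnegative solution for every $b\geq 0$, given by $x=(A+I_n)^{-1}b\geq 0$; indeed, the orthant decomposition (Proposition~\ref{propDecomp}) shows that in the nonnegative orthant the AVE reduces to $(A+I_n)x=b$, so $x=(A+I_n)^{-1}b$ solves $Ax+|x|=b$ and is nonnegative.

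Finally I would merge the two observations: the vector $(A+I_n)^{-1}b$ is a solution of the AVE lying in $\R^n_{\geq 0}$, and the Wu--Li theorem forces it to be the \emph{only} solution in $\R^n$. Thus for each $b\geq 0$ the unique solution of the AVE is given by $(A+I_n)^{-1}b$ and is nonnegative. There is essentially no obstacle here; the statement is a direct synthesis of Proposition~\ref{propSolvNonneg} and the regularity-based uniqueness theorem, with inverse nonnegativity providing exactly the two ingredients (regularity of $[A\pm I_n]$ and nonnegativity of $(A+I_n)^{-1}$) that both prerequisites require.
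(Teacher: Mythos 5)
Your proposal is correct and follows essentially the same route as the paper: inverse nonnegativity gives regularity of $[A\pm I_n]$ (hence unique solvability for every $b$), and $(A+I_n)^{-1}\geq0$ gives the explicit nonnegative solution $x=(A+I_n)^{-1}b$ in the nonnegative orthant, which by uniqueness must be the solution. No gaps.
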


In the general case, the solution is efficiently computable by linear programming \cite{ZamHla2021a}. However, we can say more -- the problem is strongly polynomial. Our algorithm is inspired by the method for finding the interval hull of interval systems with inverse nonnegative matrices~\cite{Neu1990}.

\begin{algorithm}[t]
\caption{Solving AVE with $[A\pm I_n]$ inverse nonnegative\label{algInvNonneg}}
\begin{algorithmic}[1]
\STATE
put $x^0\coloneqq (A+I_n)^{-1}b$, $k\coloneqq 0$
\REPEAT
\STATE%\label{stLoop}
$s^k\coloneqq\sgn(x^k)$
\STATE
$x^{k+1}\coloneqq(A+D_{s^k})^{-1}b$
\STATE
$k\coloneqq k+1$
\UNTIL{$x^k=x^{k-1}$}
\RETURN
$x^k$
\end{algorithmic}
\end{algorithm}

\begin{proposition}%\label{propInvNonnegIter}
If $[A\pm I_n]$ is inverse nonnegative, then Algorithm~\ref{algInvNonneg} returns the unique solution of AVE in at most $n$ iterations.
\end{proposition}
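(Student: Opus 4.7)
The plan is to combine a well-definedness step with a monotonicity analysis of the iterates. For well-definedness, observe that every matrix of the form $A+D_{s^k}$ with $s^k\in\{\pm1\}^n$ lies in the interval matrix $[A\pm I_n]$; by inverse nonnegativity, the inverse $(A+D_{s^k})^{-1}$ exists and is entrywise nonnegative. This guarantees that each iterate $x^{k+1}=(A+D_{s^k})^{-1}b$ is well defined.

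The technical core will be a monotonicity lemma. I introduce the convention $s^{-1}\coloneqq e$ so that $x^0=(A+D_{s^{-1}})^{-1}b$, and denote by $x^*$ the unique AVE solution with $s^*\coloneqq\sgn(x^*)$, equivalently $x^*=(A+D_{s^*})^{-1}b$. The resolvent identity gives
\begin{align*}
x^{k+1}-x^k &= (A+D_{s^k})^{-1}(D_{s^{k-1}}-D_{s^k})x^k,\\
x^k-x^* &= (A+D_{s^{k-1}})^{-1}(D_{s^*}-D_{s^{k-1}})x^*.
\end{align*}
A coordinate-wise case analysis using $s^k_i=\sgn(x^k_i)$ and $s^*_i=\sgn(x^*_i)$ shows that $(D_{s^{k-1}}-D_{s^k})x^k\leq0$ and $(D_{s^*}-D_{s^{k-1}})x^*\geq0$ entrywise. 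Pre-multiplying by the respective nonnegative inverses yields the monotonicity inequalities $x^{k+1}\leq x^k$ and $x^k\geq x^*$ for every $k\geq0$.

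With monotonicity in hand, I track the index sets $J_k\coloneqq\{i:x^k_i<0\}$ and $J^*\coloneqq\{i:x^*_i<0\}$. The inequalities above translate into $J_k\subseteq J_{k+1}$ and $J_k\subseteq J^*$, so $(J_k)_{k\geq0}$ is a nondecreasing chain of subsets of $J^*\subseteq\seznam{n}$. Since $|J^*|\leq n$, the chain can grow strictly at most $n$ times, so the sign vector $s^k$ must stabilize within $n$ iterations; as soon as $s^{k-1}=s^{k-2}$, the algorithm produces $x^k=x^{k-1}$ and terminates. The returned vector then satisfies $(A+D_{s^{k-1}})x^k=b$ together with $D_{s^{k-1}}x^k=|x^k|$ (because $s^{k-1}=\sgn(x^k)$), hence $Ax^k+|x^k|=b$, and uniqueness follows from the regularity of $[A\pm I_n]$. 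I expect the main obstacle to be the sign case analysis for the two correction vectors, together with bookkeeping for zero coordinates of the iterates, which can be handled by adopting the convention $\sgn(0)=1$ so that all sign vectors remain in $\{\pm1\}^n$.
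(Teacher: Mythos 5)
Your proof is correct and follows essentially the same route as the paper's: the core is the componentwise monotonicity $x^{k+1}\leq x^k$, obtained from $(A+D_{s^k})x^k = Ax^k+|x^k| \geq (A+D_{s^{k-1}})x^k=b$ (your resolvent identity is just a rewriting of this computation) together with nonnegativity of $(A+D_{s^k})^{-1}$, followed by the observation that components can only flip from nonnegative to negative, which bounds the number of sign changes by $n$. The extra lower bound $x^k\geq x^*$ and the index-set bookkeeping $J_k\subseteq J_{k+1}\subseteq J^*$ are harmless additions the paper does not need, and your explicit verification that the fixed point solves the AVE fills in a step the paper leaves implicit.
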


\begin{proof}
For any $k>0$ we have 
$$
(A+D_{s^k})x^k
=Ax^k+|x^k|
\geq Ax^k+D_{s^{k-1}}x^k
=(A+D_{s^{k-1}})x^k=b.
$$
By inverse nonnegativity, we have
$$
x^k
\geq (A+D_{s^k})^{-1}b
=x^{k+1}.
$$
Hence the sequence of vectors $x^0,x^1,x^2,\dots$ forms a (componentwise) nonincreasing sequence. This proves that the number of iterations is finite. The number of $n$ for the maximum number of iterations follows from the fact that the sign vector $s^k$ changes only if some $x_i^k$ changes its sign (from positive value to negative), which happens at most $n$-times.
\end{proof}

Proposition~\ref{propSolvInvNonnegNonneg} can be generalized as follows.

\begin{proposition}\label{propSolvInvNonnegOrth}
If $[AD_s\pm I_n]$ is inverse nonnegative for some $s\in\{\pm1\}^n$, then for each $b\geq0$ the AVE has a unique solution and this solution lies in the orthant $D_sx\geq0$.
\end{proposition}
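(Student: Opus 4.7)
The plan is to reduce this statement to the already-proved Proposition~\ref{propSolvInvNonnegNonneg} via a linear change of variables. The natural substitution is $y\coloneqq D_s x$; since $D_s=D_s^{-1}$ has entries in $\{\pm1\}$, this map is an involution on $\R^n$, and, crucially, it preserves absolute values entrywise, since $|x_i|=|s_i y_i|=|y_i|$ for each~$i$, i.e., $|x|=|y|$ for every $x\in\R^n$.

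First I would verify that $x$ solves $Ax+|x|=b$ if and only if $y=D_s x$ solves the transformed AVE
\begin{align*}
(AD_s)\,y+|y|=b,
\end{align*}
which is immediate from $x=D_s y$ and $|x|=|y|$. Since $x\mapsto D_s x$ is a bijection on $\R^n$, solution sets of the two systems are in one-to-one correspondence, so uniqueness is transported along the substitution. Next, I would invoke Proposition~\ref{propSolvInvNonnegNonneg} applied to the matrix $AD_s$ (whose associated interval matrix $[AD_s\pm I_n]$ is inverse nonnegative by hypothesis): for each $b\geq0$ it yields a unique solution $y$ of the transformed AVE and guarantees $y\geq0$.

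Translating back, $x\coloneqq D_s y$ is the unique solution of the original AVE, and $D_s x=D_s^2 y=y\geq0$ places it in the orthant $D_s x\geq0$, as claimed.

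The argument is essentially routine once the substitution is identified; there is no real obstacle. The only point worth double-checking is the identity $|x|=|y|$ (it relies on $s_i\in\{\pm1\}$ for every $i$), together with the fact that the substitution does not create or destroy solutions outside the orthant $D_s x\geq0$ — but this follows since the bijection covers all of $\R^n$ and the unique $y$ produced by Proposition~\ref{propSolvInvNonnegNonneg} is nonnegative, forcing every preimage to sit in the prescribed orthant.
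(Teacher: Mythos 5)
Your proof is correct and matches the paper's argument in substance: both rest on conjugation by $D_s$, which turns the AVE into one with matrix $AD_s$ (using $|D_sy|=|y|$) and reduces everything to inverse nonnegativity and regularity of $[AD_s\pm I_n]$. The paper carries this out inline via the orthant decomposition and the formula $D_sx=(AD_s+I_n)^{-1}b\geq0$, whereas you package it as an explicit bijective change of variables feeding into Proposition~\ref{propSolvInvNonnegNonneg}; both are sound, and your reduction cleanly reuses the earlier result rather than re-deriving it.
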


\begin{proof}
Unique solvability follows from regularity of $[AD_s\pm I_n]$. In the orthant $D_sx\geq0$, the solution are characterized by the system $(A+D_s)x=b$, $D_sx\geq0$. From the equation we derive $D_sx=D_s(A+D_s)^{-1}b=(AD_s+I_n)^{-1}b\geq0$, so the solution fits the right orthant.
\end{proof}

It is not hard to verify the assumptions of Proposition~\ref{propSolvInvNonnegOrth}. One just calculates $A^{-1}$, which provides a sign pattern. If $\sgn(A^{-1})=se^T$ for some $s\in\{\pm1\}^n$, then the problem reduces to checking inverse nonnegativity of $[AD_s\pm I_n]$; otherwise, the assumption cannot be satisfied.

It is an open question whether the converse implication in Proposition~\ref{propSolvInvNonnegNonneg} is valid. We can state it under stronger assumptions on regularity of $[A\pm I_n]$.

\begin{proposition}
Let $[A\pm I_n]$ be regular. If for each $b\geq0$ the AVE has a unique solution and this solution is nonnegative, then $[A\pm I_n]$ is inverse nonnegative.
\end{proposition}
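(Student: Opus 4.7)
The plan is to reduce the claim to Lemma~\ref{lmmKuttlerExt} by establishing the nonnegativity of a single inverse, namely $(A+I_n)^{-1}$. Since $[A\pm I_n]$ is assumed regular, in particular $A+I_n$ is nonsingular, so $(A+I_n)^{-1}$ exists and it only remains to show it is entrywise nonnegative.

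First, I would fix an arbitrary $b\geq 0$ and apply the hypothesis: the AVE has a unique solution $x^b$ and $x^b\geq 0$. By Proposition~\ref{propDecomp}, any solution lying in the nonnegative orthant satisfies $(A+I_n)x=b$, so $x^b=(A+I_n)^{-1}b$. Thus $(A+I_n)^{-1}b\geq 0$ for every $b\geq 0$. Choosing $b=e_j$ for $j=1,\dots,n$ shows that every column of $(A+I_n)^{-1}$ is nonnegative, i.e.\ $(A+I_n)^{-1}\geq 0$.

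Now $\overline{A}\coloneqq A+I_n$ is the upper endpoint of the interval matrix $[A\pm I_n]$, and by assumption $[A\pm I_n]$ is regular. Lemma~\ref{lmmKuttlerExt} applied with $\overline{A}=A+I_n$ then yields that $[A\pm I_n]$ is inverse nonnegative, which completes the proof.

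The argument is quite short; the only subtle point is recognizing that the nonnegativity hypothesis, combined with Proposition~\ref{propDecomp}, pins the solution down to the closed-form expression $(A+I_n)^{-1}b$ and therefore forces $(A+I_n)^{-1}\geq 0$. After that, Lemma~\ref{lmmKuttlerExt} does all the remaining work, so there is no real obstacle beyond invoking the right auxiliary result.
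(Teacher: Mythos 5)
Your proof is correct and follows essentially the same route as the paper: the key step is establishing $(A+I_n)^{-1}\geq 0$ and then invoking Lemma~\ref{lmmKuttlerExt}. The only cosmetic difference is that the paper obtains $(A+I_n)^{-1}\geq 0$ by citing the \quo{only if} direction of Proposition~\ref{propSolvNonneg}, whereas you re-derive that fact inline via Proposition~\ref{propDecomp} and the test vectors $b=e_j$.
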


\begin{proof}
From Proposition~\ref{propSolvNonneg} we get $(A+I_n)^{-1}\geq0$. 
From Lemma~\ref{lmmKuttlerExt}, we have the rest. 
\end{proof}

Now, we can give a complete characterization of inverse nonnegative interval matrices in terms of solvability of AVE.

\begin{proposition}
The interval matrix $[A\pm I_n]$ is inverse nonnegative if and only if for each $b\geq0$ the AVE has a unique nonnegative solution and for each $b\leq0$ the AVE has a unique nonpositive solution.
\end{proposition}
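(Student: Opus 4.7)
The plan is to use Kuttler's characterization to reduce the question to proving nonnegativity of the two endpoint inverses $(A+I_n)^{-1}$ and $(A-I_n)^{-1}$, and to exploit the orthant decomposition of Proposition~\ref{propDecomp} to relate each endpoint inverse to the existence of correctly signed AVE solutions.

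For the \textbf{if} direction I would assume $[A\pm I_n]$ is inverse nonnegative. The $b\geq0$ case is already Proposition~\ref{propSolvInvNonnegNonneg}, so it remains to handle $b\leq0$. Inverse nonnegativity implies regularity of $[A\pm I_n]$, hence by the previously cited uniqueness theorem there is a unique solution $x$ of the AVE for every $b$. To show $x\leq0$, I would apply Proposition~\ref{propDecomp} with $s=-e$: any solution lying in the negative orthant must satisfy $(A-I_n)x=b$, whose candidate $x=(A-I_n)^{-1}b$ is nonpositive because $(A-I_n)^{-1}\geq0$ (inverse nonnegativity of the upper endpoint is not even needed here) and $b\leq0$. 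This $x$ verifies $D_{-e}x\geq0$, so it is a genuine solution, and by global uniqueness it is the unique one.

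For the \textbf{only if} direction, Proposition~\ref{propSolvNonneg} applied to the $b\geq0$ hypothesis immediately gives $(A+I_n)^{-1}\geq0$. I would then mimic its proof verbatim with signs reversed to obtain $(A-I_n)^{-1}\geq0$: singularity of $A-I_n$ precludes global uniqueness of the nonpositive solution, and if some entry $(A-I_n)^{-1}_{ij}<0$, then choosing $b\coloneqq -e_j\leq0$ produces the candidate solution $x=(A-I_n)^{-1}b$ with $x_i>0$, contradicting the assumed nonpositivity. With both $(A+I_n)^{-1}\geq0$ and $(A-I_n)^{-1}\geq0$ in hand, Kuttler's theorem (cited just before Lemma~\ref{lmmKuttlerExt}) concludes that $[A\pm I_n]$ is inverse nonnegative.

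There is no real obstacle here; the statement is essentially a repackaging of Proposition~\ref{propSolvInvNonnegNonneg} together with its mirror image for the $b\leq0$, $x\leq0$ orthant, glued by Kuttler's two-endpoint criterion. The only point to be a little careful about is that one should not invoke regularity of $[A\pm I_n]$ in the only-if direction before deducing both inverse inequalities, since regularity is itself part of what Kuttler's theorem delivers once nonnegativity at the two endpoints is established.
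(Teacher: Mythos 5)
Your proof is correct and follows essentially the same route as the paper: one direction via Proposition~\ref{propSolvInvNonnegNonneg} and its mirrored construction $x=(A-I_n)^{-1}b\leq0$ for $b\leq0$, the other via Proposition~\ref{propSolvNonneg}, its sign-reversed analogue, and Kuttler's theorem. The only difference is that you spell out the steps the paper dismisses as \quo{obvious} and \quo{analogously} (and your \quo{if}/\quo{only if} labels are swapped relative to the paper's, which is immaterial).
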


\begin{proof}
\quo{Only if.} Obvious in view of Proposition~\ref{propSolvInvNonnegNonneg}.

\quo{If.}
By Proposition~\ref{propSolvNonneg}, if AVE has a unique nonnegative solution for each $b\geq0$, then $(A+I_n)^{-1}\geq0$. Analogously we can show that if AVE has a unique nonpositive solution for each $b\leq0$, then $(A-I_n)^{-1}\geq0$. By Kuttler's theorem, $[\umace{A},\omace{A}]$ is inverse nonnegative.
\end{proof}

%%%%%%%%%%%%%%%%%%%%%%%%%%%%%%%%%%%%%%%%%%%%%%%%%%%%%%%%%%%%%%% 
\section{Finite and infinite number of solutions}

%%%%%
\paragraph{The infinite case} 
We first discuss the question of how many orthants may possess infinitely many solutions and under which assumptions it may happen.

\begin{proposition}
There is no AVE such that each orthant contains infinitely many solutions.
\end{proposition}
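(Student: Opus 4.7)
The plan is to reduce the statement to a linear-algebra impossibility, then kill it with a multilinear expansion of the determinant.

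First I would use Proposition~\ref{propDecomp} to translate the hypothesis: if the orthant $D_s x\geq 0$ contains infinitely many solutions of AVE, then the system $(A+D_s)x=b$ has infinitely many solutions in that orthant, so in particular $A+D_s$ must be singular. Thus, if every one of the $2^n$ closed orthants contained infinitely many solutions, we would have $\det(A+D_s)=0$ for every $s\in\{\pm1\}^n$.

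Next I would consider the function
\begin{align*}
f(d_1,\dots,d_n)\coloneqq \det(A+\diag(d_1,\dots,d_n)).
\end{align*}
Since the variable $d_i$ only appears in position $(i,i)$, multilinearity of the determinant in its rows gives the expansion
\begin{align*}
f(d)=\sum_{S\subseteq\{1,\dots,n\}} \det(A_{\bar S,\bar S}) \prod_{i\in S} d_i,
\end{align*}
where $A_{\bar S,\bar S}$ denotes the principal submatrix of $A$ with rows and columns indexed by $\{1,\dots,n\}\setminus S$ (with the convention that the determinant of the empty matrix is $1$). In particular, the coefficient of the top monomial $d_1d_2\cdots d_n$ equals $1$, so $f$ is a nonzero multilinear polynomial.

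The final step is to observe that a nonzero multilinear polynomial on $n$ variables cannot vanish on the whole vertex set $\{\pm 1\}^n$. This is a standard fact — one easy argument is induction on $n$ by splitting $f(d)=d_n g(d_1,\dots,d_{n-1})+h(d_1,\dots,d_{n-1})$ and plugging in $d_n=\pm1$ to get $g\equiv h\equiv 0$ on $\{\pm1\}^{n-1}$; alternatively, one notes that evaluation at the $2^n$ sign vectors is an invertible linear map from the $2^n$-dimensional space of multilinear polynomials. Either way, this contradicts $f(s)=\det(A+D_s)=0$ for all $s\in\{\pm1\}^n$, finishing the proof.

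The only genuinely nontrivial ingredient is locating the nonzero coefficient of $f$; once the multilinear structure is spotted, the argument is essentially automatic, so I expect no real obstacle.
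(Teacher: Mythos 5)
Your proof is correct, but it takes a genuinely different route from the paper. Both arguments share the first step: via Proposition~\ref{propDecomp}, infinitely many solutions in every orthant forces $\det(A+D_s)=0$ for all $s\in\{\pm1\}^n$. The paper then invokes an external result on interval matrices twice --- first to conclude that every matrix in $[A\pm I_n]$ is singular, and then to conclude that the radius matrix $I_n$ would have to be singular, a contradiction. You instead make the contradiction self-contained: the multilinear expansion
\begin{align*}
\det(A+\diag(d))=\sum_{S\subseteq\{1,\dots,n\}} \det(A_{\bar S,\bar S}) \prod_{i\in S} d_i
\end{align*}
is correct (expand the determinant row by row, writing row $i$ as $A_{i*}+d_ie_i^T$, and note the leading coefficient of $d_1\cdots d_n$ is $1$), and the fact that a nonzero multilinear polynomial cannot vanish on all of $\{\pm1\}^n$ is standard; your induction argument for it is sound. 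What your approach buys is elementarity and transparency --- no citation is needed, and the argument isolates exactly why the conclusion holds (the leading principal-minor coefficient $\det(A_{\emptyset,\emptyset})=1$ is nonzero, which is precisely the nonsingularity of $I_n$ in disguise). What the paper's approach buys is brevity and a pointer to a more general structural result about singularity of interval matrices, of which this is a special case. Either proof is acceptable.
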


\begin{proof}
Suppose to the contrary that such AVE system exists. Then matrix $A+D_s$ is singular for every $s\in\{\pm1\}^n$. By \cite{Hla2018e} this means that every matrix in $[A\pm I_n]$ is singular. By \cite{Hla2018e} again we have that $I_n$ is singular as well, a contradiction.
\end{proof}

Even though there cannot be infinitely many solutions in all orthants, there can be in all orthants but one. Consider, for example, the system $x+|x|=0$. All orthants contain infinitely many solutions, except for the positive orthant.

The above example works, but, as we will see in Proposition~\ref{propInfOrthBezJed}, it is in some sense degenerate. 
Consider now the AVE system
\begin{align*}
\begin{pmatrix}0_{n-1} & -e\\0 & 1\end{pmatrix}x+|x|=
\begin{pmatrix}2e\\0 \end{pmatrix}.
\end{align*}
All $2^{n-1}$ orthants with the negative last coordinate contain infinitely many solutions in their interiors. In particular, the orthant given by $s\in\{\pm1\}^{n-1}\times\{-1\}$ contains the interior point solution $x=s$. 

\begin{proposition}\label{propInfOrthBezJed}
If the AVE has infinitely many solutions in $2^n-1$ orthants, then $b=0$, $\det(A)=a_{11}\cdots a_{nn}$ and $|a_{ii}|=1$ for every $i=1,\dots,n$.
\end{proposition}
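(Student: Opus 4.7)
The plan is to combine the singularity implied by ``infinitely many solutions in an orthant'' with the consistency of the corresponding linear system, read off the matrix identities by a multilinear (Fourier-type) expansion of the determinant, and then extract $b=0$ via an augmented-determinant argument.

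First, by Proposition~\ref{propDecomp}, infinitely many solutions in the orthant $D_s x\geq 0$ means the system $(A+D_s)x=b$, $D_s x\geq 0$ has a solution set of positive dimension; this forces $A+D_s$ to be singular and the system to be consistent. Consistency with rank-deficient $A+D_s$ is equivalent to every $n\times n$ minor of the augmented matrix $[A+D_s\mid b]$ being zero. Writing $C_k(s)$ for the matrix obtained from $A+D_s$ by replacing its $k$-th column with $b$, this reads: for every $s\neq s^*$ (the exceptional orthant) and every $k\in\{1,\dots,n\}$,
\[
\det(A+D_s)=0 \quad\text{and}\quad \det(C_k(s))=0.
\]

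For the matrix conditions I would use the multilinear expansion
\[
\det(A+D_s)=\sum_{T\subseteq[n]} \det(A_{T,T}) \prod_{i\notin T} s_i.
\]
As a function on $\{\pm1\}^n$, this vanishes at $2^n-1$ of the $2^n$ points, so -- since the space of multilinear polynomials vanishing on a fixed $(2^n-1)$-point subset of $\{\pm1\}^n$ is one-dimensional -- it must be a scalar multiple of $\prod_i(s^*_i+s_i)$, which vanishes exactly off $s^*$. Matching the coefficient of $\prod_i s_i$ (equal to $\det(A_{\emptyset,\emptyset})=1$ on the left, and to $1$ on the right) fixes the scalar and yields $\det(A_{T,T})=\prod_{i\in T} s^*_i$ for every $T\subseteq[n]$. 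The special cases $T=\{i\}$ and $T=[n]$ immediately give $a_{ii}=s^*_i\in\{\pm1\}$, hence $|a_{ii}|=1$, and $\det(A)=\prod_i s^*_i=a_{11}\cdots a_{nn}$.

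To prove $b=0$, observe that $\det(C_k(s))$ is independent of $s_k$, because $s_k$ enters $A+D_s$ only at the $(k,k)$-entry, and this is precisely the column that has been overwritten by $b$ in $C_k(s)$. Fixing $s_k=-s^*_k$ forces $s\neq s^*$ for every choice of the remaining coordinates, so $\det(C_k(s))=0$ for all $s_{[n]\setminus\{k\}}\in\{\pm1\}^{n-1}$. A multilinear polynomial in $n-1$ variables that vanishes at every point of $\{\pm1\}^{n-1}$ is identically zero, hence $\det(C_k(s))\equiv 0$ as a polynomial in $s$, and in particular at $s=s^*$. Since $\det(A+D_{s^*})=\prod_i(2s^*_i)\neq 0$, the matrix $A+D_{s^*}$ is invertible, and Cramer's rule gives
\[
\bigl((A+D_{s^*})^{-1}b\bigr)_k=\frac{\det(C_k(s^*))}{\det(A+D_{s^*})}=0
\]
for every $k$, so $b=(A+D_{s^*})\cdot 0=0$.

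The main obstacle I anticipate is spotting the $s_k$-independence of $\det(C_k(s))$, which is what turns finitely many per-orthant consistency conditions into an identical vanishing and thereby into Cramer's rule at $s^*$. Without this trick one is pushed toward analyzing the combinatorial structure of $A$ directly -- for instance, proving that it is permutation-similar to a triangular matrix by chasing cycle-vanishings through the higher principal minors -- which is feasible but considerably more involved.
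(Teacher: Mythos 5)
Your proof is correct, and it follows a genuinely different route from the paper's. The paper deduces singularity of $A+D_s$ for all $s$ in each half-cube $\{s: s_i=-s^*_i\}$ and invokes the Cohen--Johnson characterization of such singularity patterns to extract a nested zero structure of $A$ (a row with only a diagonal nonzero, then a row with at most two nonzeros, etc.), from which $\det(A)=a_{11}\cdots a_{nn}$, $|a_{ii}|=1$ and, by a row-by-row induction, $b=0$ follow. You instead work with the Walsh-type multilinear expansion $\det(A+D_s)=\sum_{T}\det(A_{T,T})\prod_{i\notin T}s_i$: since a multilinear polynomial on $\{\pm1\}^n$ vanishing at $2^n-1$ points is a scalar multiple of $\prod_i(s^*_i+s_i)$, and the leading coefficient is $1$, you get $\det(A_{T,T})=\prod_{i\in T}s^*_i$ for \emph{every} principal submatrix, which contains the stated diagonal and determinant identities (and shows $a_{ii}=s^*_i$, pinning down the exceptional orthant). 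Your treatment of $b=0$ via the $s_k$-independence of the consistency minors $\det(C_k(s))$ and Cramer's rule at $s^*$ is a clean replacement for the paper's structural induction; note that you do not even need the full ``identically zero polynomial'' step, since vanishing on the whole half-cube $s_k=-s^*_k$ already covers the evaluation point $(s^*_j)_{j\neq k}$. What your approach buys is a self-contained argument (no appeal to external singularity-pattern results) and the stronger byproduct on all principal minors; what the paper's approach buys is explicit combinatorial information about the off-diagonal zero pattern of $A$, which is not visible from the principal-minor identities alone.
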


\begin{proof}
From the assumption, $A+D_s$ is singular for each $s\in\{\pm1\}^n$ but possibly one $s^*$. Let $i\in\seznam{n}$ be arbitrary and fix $s_i\coloneqq -s^*_i$. Then for every admissible sign vector $s$, the matrix $A+D_s$ is singular. By \cite{CohDan1996,CohJoh1989}, one of the submatrices $(A+D_s)_{1:i,i:n}$ or $(A+D_s)_{i:n,1:i}$ must be zero. This shows that $\det(A)=a_{11}\cdots a_{nn}$ and $|a_{ii}|=1$ for every $i=1,\dots,n$.

We claim that there is $k$ such that $|A_{k*}|=e_k^T$. We can see it by induction. From the above observation with $i\coloneqq n$, we have $|A_{n*}|=e_n^T$ or $|A_{*n}|=e_n$. In the first case, we are done. In the second case, we inductively inspect the upper-left submatrix. 

In fact, the statements can be generalized: for every $\ell$, there exist $\ell$ rows of $A$ such that each of them has at most $\ell$ nonzero elements (one of them is on the diagonal).

Let $k$ be such that $|A_{k*}|=e_k^T$. Then $b_k=0$ since otherwise the $k$-th equation in the AVE is unsolvable either for $x_k\geq0$, or for $x_k\leq0$. As a consequence, we necessarily have $x_k=0$ when restricted to $x_k\geq0$ or to $x_k\leq0$.
Now, take $k'$ such that the $k'$ row of $A$ has at most two nonzeros $a_{k',k'}$ and $a_{k',k}$. From the above reasoning, we have $x_k=0$ in half of the orthants, so we again deduce $b_{k'}=0$. 
By induction, we proceed further until we show $b=0$.
\end{proof}

\begin{proposition}
Let $n=2$ and $b\not=0$. Then the solution set of AVE cannot have infinitely many solutions in the opposite quadrants.
\end{proposition}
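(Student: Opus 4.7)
The plan is to argue by contradiction. Suppose that opposite quadrants $D_s x \geq 0$ and $D_{-s} x \geq 0$ (for some $s \in \{\pm 1\}^2$) both contain infinitely many solutions of AVE. By Proposition~\ref{propDecomp}, these sub-solution-sets are described by $\{x : (A+D_s)x = b,\ D_s x \geq 0\}$ and $\{x : (A-D_s)x = b,\ D_s x \leq 0\}$ respectively; for each to be infinite, the corresponding matrix $A \pm D_s$ must be singular and the system consistent.

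First I would rule out the rank-$0$ case: if $A + D_s = 0$, then $(A + D_s)x = 0 \neq b$ is inconsistent, a contradiction; the same applies to $A - D_s$. Hence both matrices have rank exactly~$1$. Writing $A = \begin{pmatrix} \alpha & \beta \\ \gamma & \delta \end{pmatrix}$, subtracting and adding the equations $\det(A+D_s)=0$ and $\det(A-D_s)=0$ yields the structural relations $s_1 \delta + s_2 \alpha = 0$ and $\beta \gamma = s_1 s_2 (1-\alpha^2)$.

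The core step is to compare the one-dimensional column spaces of $A+D_s$ and $A-D_s$. Their first columns are $(\alpha+s_1,\ \gamma)^T$ and $(\alpha-s_1,\ \gamma)^T$. When $\gamma \neq 0$, these are not proportional, so the two ranges are distinct lines through the origin and meet only at the zero vector; but by consistency $b$ must lie in both ranges, forcing $b = 0$, contrary to the hypothesis. In the remaining case $\gamma = 0$, the relation $\beta \gamma = s_1 s_2(1-\alpha^2)$ forces $\alpha^2 = 1$, and a brief inspection of the second columns in each of the two resulting sub-cases $\alpha = \pm 1$ again gives $b = 0$.

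The main obstacle I anticipate is not conceptual but bookkeeping: the case $\gamma = 0$ further splits according to $\alpha = \pm 1$, and one must check in each sub-case that the ranges of $A + D_s$ and $A - D_s$ cannot share a nonzero vector. The unifying idea throughout is that two singular rank-$1$ matrices $A + D_s$ and $A - D_s$ can share a nonzero vector in both their images only when their column spaces coincide, and the structural relations above preclude such a coincidence whenever $b \neq 0$.
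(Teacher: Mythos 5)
Your proof is correct, and it reaches the contradiction by a route that is dual to the paper's. You work on the column side: both $A+D_s$ and $A-D_s$ must be singular with consistent systems, hence rank exactly one (your rank-zero exclusion is right), so each has a one-dimensional range that must contain $b$; you then compute explicitly that these two lines are distinct, splitting into the cases $\gamma\neq0$ and $\gamma=0$ (with $\alpha=\pm1$). The paper works on the row side instead: infinitely many solutions of $(A\pm D)x=b$ yield left null vectors $y,y'\neq0$ with $y^Tb=y'^Tb=0$; since $n=2$ and $b\neq0$, the orthogonal complement of $b$ is one-dimensional, so $y$ and $y'$ are parallel, and then $y^T(A+D)=y^T(A-D)=0$ forces $y^TD=0$, i.e.\ $y=0$ because $D$ is nonsingular. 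The two arguments encode the same geometric fact --- the ranges of $A+D$ and $A-D$ cannot share the nonzero vector $b$ --- but the paper's left-null-vector formulation dispatches it in three lines with no case analysis and no need for the structural relations $s_1\delta+s_2\alpha=0$ and $\beta\gamma=s_1s_2(1-\alpha^2)$, whereas your version is more elementary and makes the obstruction concrete at the cost of the $\gamma=0$, $\alpha=\pm1$ bookkeeping you anticipated. One small imprecision: in the $\gamma=0$ sub-cases the spanning column is the second column for one of the two matrices but the first column for the other (e.g.\ for $\alpha=-s_1$ the range of $A+D_s$ is spanned by $(\beta,2s_2)^T$ while that of $A-D_s$ is spanned by $(1,0)^T$), so "inspection of the second columns" should read "inspection of whichever column is nonzero"; the conclusion $b=0$ is unaffected.
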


\begin{proof}
Suppose to the contrary that there is an AVE system $Ax+|x|=b$ such that there are infinitely many solutions in the orthants represented by $\pm D$, where $|D|=I_n$. Since the system $(A+D)x=b$ has infinitely many solutions, there is $y\not=0$ such that $y^T(A+D)=0^T$ and $y^Tb=0$. Similarly, there is $y'\not=0$ such that $y'^T(A-D)=0^T$ and $y'^Tb=0$. Since $n=2$ and $b\not=0$, both vectors $y$ and $y'$ represent the same direction. Thus we can assume without loss of generality that $y'=y$. However, this implies $y^T(A+D)=0^T=y^T(A-D)$, whence $y^TD=0^T$. Therefore $y=0$; a contradiction. 
\end{proof}

If $b=0$, then the situation may occur. Figure~\ref{figSolSet2} shows the counterexample, and Figure~\ref{figSolSet6} presents a system with infinitely many solutions in the interiors of the opposite quadrants.
If $n>2$, then the situation may occur, too. Consider the AVE system with
$$
A=\begin{pmatrix}6 & -8 & 3\\ 9 & -13 & 5\\ 12 & -18 & 7\end{pmatrix},\quad
b=\begin{pmatrix}1\\ 3\\ 6\end{pmatrix}.
$$
The solution set in the nonnegative orthant forms a line passing through point $x=(1,3,6)^T$, and similarly in the nonpositive orthant it forms a line passing through point $-x$.

%%%%%
\paragraph{The finite case} 
Now, we characterize the case where the solution set $\Ss(b)$ is finite (possibly empty) for every right-hand side vector~$b$. The property turns out to be hard to check, so we also present several sufficient conditions.

\begin{proposition}\label{propFinChar}
The set $\Ss(b)$ is finite for each $b\in\R^n$ if and only if $A+D_s$ is nonsingular for each $s\in\{\pm1\}^n$.
\end{proposition}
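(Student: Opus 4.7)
The plan is to reduce the problem to Proposition~\ref{propDecomp}, which decomposes $\Ss(b)$ according to the $2^n$ orthants of $\R^n$ and describes each piece as the solutions of the linear system $(A+D_s)x=b$ subject to the sign constraint $D_sx\geq0$. Once this decomposition is in place, both directions become statements about the kernels of the matrices $A+D_s$.

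For the \quo{if} direction, suppose $A+D_s$ is nonsingular for every $s\in\{\pm1\}^n$. Then for each fixed $b$ the linear system $(A+D_s)x=b$ has the unique solution $(A+D_s)^{-1}b$, so the portion of $\Ss(b)$ in the corresponding orthant is either empty or a single point. Summing over the $2^n$ orthants yields $|\Ss(b)|\leq 2^n$, which is finite for every~$b$.

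For the \quo{only if} direction I argue contrapositively: assume some $A+D_{s^*}$ is singular and construct a right-hand side whose solution set is infinite. Pick a nonzero $v$ with $(A+D_{s^*})v=0$, and choose a point $x^0$ that lies \emph{strictly} in the interior of the orthant defined by $s^*$; a concrete choice is $x^0:=s^*$, which gives $D_{s^*}x^0=e>0$. Define $b:=(A+D_{s^*})x^0=Ax^0+|x^0|$, so that $x^0\in\Ss(b)$. For every $t\in\R$ we have $(A+D_{s^*})(x^0+tv)=b$, and by continuity the strict inequality $D_{s^*}(x^0+tv)>0$ persists for all sufficiently small $|t|$. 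Proposition~\ref{propDecomp} then guarantees $x^0+tv\in\Ss(b)$ for a whole open interval of $t$, giving infinitely many solutions.

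The only delicate point is the \quo{only if} direction: a kernel vector $v$ of $A+D_{s^*}$ could a priori point outside the orthant $D_{s^*}x\geq0$, so an arbitrary solution on the boundary might not admit a nonzero perturbation staying in that orthant. This is precisely why one starts at an \emph{interior} point $x^0$ of the orthant rather than at an arbitrary solution—after that, the argument is a routine continuity/openness remark.
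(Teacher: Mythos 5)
Your proposal is correct and follows essentially the same route as the paper: the \quo{if} direction is the identical orthant-decomposition count, and the \quo{only if} direction uses the very same construction $b:=(A+D_{s^*})s^*$ with the interior point $x^0=s^*$. You merely spell out more explicitly (via the kernel vector $v$ and the perturbation $x^0+tv$) the continuity step that the paper's proof leaves implicit.
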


\begin{proof}
\quo{If.} 
By Proposition~\ref{propDecomp}, the solution set lying in the orthant given by $s\in\{\pm1\}^n$ is described by $(A+D_s)x=b$, $D_sx\geq0$. By nonsingularity of $A+D_s$, there is none or one solution located in this orthant.

\quo{Only if.} 
Suppose to the contrary that $A+D_s$ is singular for certain $s\in\{\pm1\}^n$. Define $b\coloneqq (A+D_s)s$. In the orthant given by $s$, there lies the solution $x=s$. Since $A+D_s$ is singular and $x=s$ lies in the interior of the orthant, there are infinitely many solutions as well.
\end{proof}

Deciding on the above property is intractable even for rank-one matrices.

\begin{proposition}
Checking whether $A+D_s$ is nonsingular for each $s\in\{\pm1\}^n$ is co-NP-hard an a class of problems with $A$ having rank one and natural entries.
\end{proposition}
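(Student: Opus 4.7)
The plan is to reduce from the \textsc{Partition} problem: given naturals $a_1,\dots,a_n$ with $W\coloneqq\sum_i a_i$ even, decide whether some signing $\tau\in\{\pm1\}^n$ satisfies $\sum_i\tau_i a_i=0$. Since \textsc{Partition} is NP-hard, an NP-hardness proof of the property ``$A+D_s$ is singular for some $s\in\{\pm1\}^n$'' will immediately give co-NP-hardness of the complementary property from the proposition.

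First, I would exploit the rank-one structure. Write $A=e w^T$ for a vector $w\in\N^{n'}$ (with $n'$ chosen below); then $A$ has rank one and natural entries $w_j$. Applying the matrix determinant lemma to the rank-one update of the diagonal matrix $D_s$, and using $D_s^{-1}=D_s$ together with $D_s e=s$, one obtains
\begin{align*}
\det(A+D_s)=\det(D_s+ew^T)=(\det D_s)(1+w^T D_s^{-1}e)=\Bigl(\prod_i s_i\Bigr)\Bigl(1+\sum_i s_i w_i\Bigr).
\end{align*}
Thus $A+D_s$ is singular for some $s\in\{\pm1\}^{n'}$ if and only if the equation $\sum_i s_i w_i=-1$ admits a solution in $\{\pm1\}^{n'}$.

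Second, I would encode the \textsc{Partition} instance into $w$. Set $n'\coloneqq n+1$, $w_i\coloneqq 2a_i$ for $i\leq n$, and $w_{n+1}\coloneqq 1$; then
\begin{align*}
\sum_{i=1}^{n+1} s_i w_i=2\sum_{i=1}^{n}s_i a_i+s_{n+1}.
\end{align*}
The inner sum $\sum_{i=1}^n s_i a_i=W-2\sum_{s_i=-1}a_i$ has the same parity as $W$ and is therefore even, so $2\sum_{i=1}^n s_i a_i$ is divisible by $4$. Hence the equation $\sum s_i w_i=-1$ forces $s_{n+1}=-1$ and $\sum_{i=1}^n s_i a_i=0$; conversely, any partition signing $\tau$ yields a solution by taking $s\coloneqq(\tau,-1)$. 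The reduction is polynomial, and the constructed $A=ew^T$ is a rank-one matrix with natural entries. This proves NP-hardness of detecting the existence of a singular $A+D_s$, and therefore co-NP-hardness of the property in the proposition.

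The main obstacle is the parity trick in the second step: a naive choice $w_i=a_i$ makes the parity of $\sum s_i w_i$ depend on $W$, and does not target $-1$ correctly. The auxiliary unit coordinate $w_{n+1}=1$ together with the doubling $w_i=2a_i$ is what aligns parities and pins $s_{n+1}$, cleanly translating the equation $\sum s_i w_i=-1$ back into an exact partition of the $a_i$.
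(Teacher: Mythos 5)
Your proof is correct and follows essentially the same route as the paper: write $A=ew^T$ with natural entries, use a rank-one determinant identity to reduce nonsingularity of $A+D_s$ to $w^Ts\neq-1$, and then invoke an NP-hard signed-sum problem. The only difference is that the paper merely asserts NP-hardness of deciding whether $v^Ts=-1$ as "a variant of subset sum" where two subsets must differ by one, whereas you supply the explicit reduction from \textsc{Partition} (the doubling-plus-unit-coordinate parity trick), which actually makes the argument more complete than the paper's.
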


\begin{proof}
Consider the matrix in the form $A=ev^T$ for some $v\in\N^n$. Now,  $A+D_s$ is nonsingular if and only if $D_SA+I_n=sv^T+I_n$ is nonsingular. By the Sherman--Morrison formula, we equivalently have $v^Ts\not=-1$. This is a variant of the NP-hard subset sum problem, in which natural numbers $v_1,\dots,v_n$ are given, and one asks to split them into two subsets the sums of which differ by one.
\end{proof}

Obviously, unique solvability of AVE for each $b\in\R^{n}$, i.e., regularity of $[A\pm I_n]$, implies a finite number of solutions. When matrix $A$ is small enough, $[A\pm I_n]$ is often not regular and we cannot use this condition. However, for small matrices we have the following criterion.
We use the relation $A\succ B$ to denote that $A-B$ is positive definite.

\begin{corollary}
$\Ss(b)$ is finite for each $b\in\R^n$ if and only if $A^TA+I_n+D_sA+A^TD_s\succ0$ for each $s\in\{\pm1\}^n$.
\end{corollary}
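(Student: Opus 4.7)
The plan is to reduce the claim, via Proposition~\ref{propFinChar}, to a characterization of nonsingularity of $A+D_s$ through positive definiteness. That proposition already states that $\Ss(b)$ is finite for every $b\in\R^n$ if and only if $A+D_s$ is nonsingular for every $s\in\{\pm1\}^n$, so it suffices to show that for a fixed sign vector $s$, nonsingularity of $A+D_s$ is equivalent to $A^TA+I_n+D_sA+A^TD_s\succ0$.

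First I would write down the algebraic identity
$$(A+D_s)^T(A+D_s)=A^TA+A^TD_s+D_sA+D_s^2=A^TA+I_n+D_sA+A^TD_s,$$
which uses that $D_s$ is symmetric and that $s\in\{\pm1\}^n$ forces $D_s^2=I_n$. Thus the matrix appearing in the corollary is precisely the Gram matrix of $A+D_s$.

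Then I would invoke the standard fact that a real square matrix $M$ is nonsingular if and only if $M^TM\succ0$: the Gram matrix $M^TM$ is always positive semidefinite, and $x^TM^TMx=\|Mx\|_2^2$ vanishes exactly when $Mx=0$, so positive definiteness is equivalent to $M$ having trivial kernel. Applying this with $M=A+D_s$ and quantifying over $s\in\{\pm1\}^n$ combines with Proposition~\ref{propFinChar} to finish the argument.

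There is no real obstacle here: the corollary is essentially a rewriting, via the identity $D_s^2=I_n$, of the nonsingularity condition from the previous proposition. The only thing one has to be mildly careful about is collecting the cross terms $A^TD_s$ and $D_sA$ correctly using $D_s^T=D_s$, which is automatic since $D_s$ is diagonal.
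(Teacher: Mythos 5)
Your proof is correct and follows exactly the paper's argument: the paper likewise observes that $(A+D_s)^T(A+D_s)=A^TA+I_n+D_sA+A^TD_s$ and that nonsingularity of $A+D_s$ is equivalent to positive definiteness of this Gram matrix, then combines this with Proposition~\ref{propFinChar}. Your write-up just makes the two standard facts (the expansion using $D_s^2=I_n$ and the kernel characterization of $M^TM\succ0$) explicit.
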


\begin{proof}
Matrix $A+D_s$ is nonsingular if and only if matrix  $(A+D_s)^T(A+D_s)=A^TA+I_n+D_sA+A^TD_s$ is positive definite.
\end{proof}

There are two practical conditions as a result.

\begin{corollary}
$\Ss(b)$ is finite for each $b\in\R^n$ if $\rho(|A|+|A|^T)<1+\lambda_{\min}(A^TA)$.
\end{corollary}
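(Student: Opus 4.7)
The plan is to invoke the previous corollary and reduce the claim to showing $A^{T}A+I_n+D_sA+A^{T}D_s\succ 0$ for every $s\in\{\pm 1\}^n$. I would write this matrix as $(A^{T}A+I_n)+(D_sA+A^{T}D_s)$ and estimate its minimal eigenvalue by Weyl's inequality
\begin{align*}
\lambda_{\min}(A^{T}A+I_n+D_sA+A^{T}D_s)
 \geq \lambda_{\min}(A^{T}A)+1+\lambda_{\min}(D_sA+A^{T}D_s),
\end{align*}
so it remains to bound $\lambda_{\min}(D_sA+A^{T}D_s)$ from below uniformly in $s$.

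Since $D_sA+A^{T}D_s$ is symmetric, $\lambda_{\min}(D_sA+A^{T}D_s)\geq -\rho(D_sA+A^{T}D_s)$. The key step is the entrywise bound
\begin{align*}
|(D_sA+A^{T}D_s)_{ij}|=|s_ia_{ij}+s_ja_{ji}|\leq |a_{ij}|+|a_{ji}|=(|A|+|A|^{T})_{ij},
\end{align*}
which gives $|D_sA+A^{T}D_s|\leq |A|+|A|^{T}$ componentwise. Applying the standard Perron--Frobenius monotonicity ($\rho(B)\leq\rho(|B|)\leq\rho(C)$ whenever $0\leq|B|\leq C$), I obtain $\rho(D_sA+A^{T}D_s)\leq\rho(|A|+|A|^{T})$.

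Combining the inequalities yields
\begin{align*}
\lambda_{\min}(A^{T}A+I_n+D_sA+A^{T}D_s)
 \geq 1+\lambda_{\min}(A^{T}A)-\rho(|A|+|A|^{T})>0
\end{align*}
by the hypothesis, valid for every $s\in\{\pm 1\}^n$. Hence the preceding corollary gives the finiteness of $\Ss(b)$ for every $b\in\R^n$.

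I expect no serious obstacle here: the only subtle point is the combined use of the sign-invariant majorization $|D_sA+A^{T}D_s|\leq|A|+|A|^{T}$ together with the Perron--Frobenius monotonicity, which lets the bound be independent of $s$ and thus reduces $2^n$ conditions to a single spectral radius inequality.
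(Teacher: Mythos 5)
Your proof is correct and follows essentially the same route as the paper: both reduce to positive definiteness of $A^TA+I_n+D_sA+A^TD_s$ via the preceding corollary, and both control the perturbation term by the entrywise majorization $|D_sA+A^TD_s|\leq|A|+|A|^T$ together with Perron--Frobenius monotonicity of the spectral radius. You merely spell out the Weyl-inequality step that the paper expresses as the Loewner comparison $-D_sA-A^TD_s\prec I_n+\lambda_{\min}(A^TA)I_n\preceq I_n+A^TA$.
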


\begin{proof}
We have $\rho(-D_sA-A^TD_s)\leq\rho(|A|+|A|^T)<1+\lambda_{\min}(A^TA)$ for each $s\in\{\pm1\}^n$. Consequently, $-D_sA-A^TD_s\prec I_n+\lambda_{\min}(A^TA)I_n\preceq I_n+A^TA$.
\end{proof}

\begin{corollary}
$\Ss(b)$ is finite for each $b\in\R^n$ if $\|A\|_2<\frac{1}{2}$.
\end{corollary}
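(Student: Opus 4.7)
The plan is to reduce to the preceding characterization (Proposition~\ref{propFinChar}) and show that $A+D_s$ is nonsingular for every sign vector $s\in\{\pm1\}^n$. Because $D_s^2=I_n$, I would factor
\begin{align*}
A+D_s = D_s(I_n+D_sA),
\end{align*}
so since $D_s$ is trivially nonsingular, it suffices to verify nonsingularity of $I_n+D_sA$.

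Next, I would use that $D_s$ is orthogonal, so $\|D_sA\|_2=\|A\|_2$. The hypothesis $\|A\|_2<\tfrac12$ then gives $\|D_sA\|_2<1$, and by the standard perturbation-of-identity bound the smallest singular value of $I_n+D_sA$ is at least $1-\|D_sA\|_2>\tfrac12>0$. Hence $I_n+D_sA$ is invertible, and Proposition~\ref{propFinChar} concludes the argument.

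Alternatively, and more in line with the two immediately preceding corollaries, one can verify the positive-definiteness condition $A^TA+I_n+D_sA+A^TD_s\succ0$ directly: from $\|D_sA+A^TD_s\|_2\leq 2\|A\|_2<1$ and symmetry of $D_sA+A^TD_s$ one obtains $I_n+D_sA+A^TD_s\succ0$, and adding $A^TA\succeq0$ preserves the strict inequality. Either route works; no serious obstacle arises, the only nuance being the observation that the spectral norm is invariant under left-multiplication by the signature matrix $D_s$, which is exactly what lets the global bound $\|A\|_2<\tfrac12$ control all $2^n$ sign patterns simultaneously.
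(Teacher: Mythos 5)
Your proposal is correct, and your second route is exactly the paper's proof: the paper bounds $\rho(D_sA+A^TD_s)\leq\|D_sA\|_2+\|A^TD_s\|_2=2\|A\|_2<1$ and concludes $A^TA+I_n+D_sA+A^TD_s\succeq I_n+D_sA+A^TD_s\succ0$, invoking the positive-definiteness characterization. Your first route, however, is genuinely different and in fact sharper. Writing $A+D_s=D_s(I_n+D_sA)$ and using orthogonality of $D_s$ to get $\|D_sA\|_2=\|A\|_2$, you only need $\|A\|_2<1$ to conclude $\sigma_{\min}(I_n+D_sA)\geq1-\|A\|_2>0$, hence nonsingularity of every $A+D_s$ and finiteness of $\Ss(b)$ by Proposition~\ref{propFinChar}. (Equivalently: $\|(A+D_s)x\|_2\geq\|D_sx\|_2-\|Ax\|_2\geq(1-\|A\|_2)\|x\|_2$.) The factor $\tfrac12$ in the statement is thus an artifact of the positive-definiteness route, where the cross term $D_sA+A^TD_s$ costs a factor of $2$; your elementary argument shows the hypothesis can be relaxed to $\|A\|_2<1$, which is what the decomposition buys you over the paper's approach.
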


\begin{proof}
If $\|A\|_2<\frac{1}{2}$, then 
$$
\rho(D_sA+A^TD_s)
\leq \|D_sA+A^TD_s\|_2
\leq \|D_sA\|_2+\|A^TD_s\|_2
=2\|A\|_2<1.
$$
Thus $A^TA+I_n+D_sA+A^TD_s\succeq I_n+D_sA+A^TD_s\succ 0$. 
\end{proof}

We present one more sufficient condition, which is incomparable with the above ones.

\begin{proposition}\label{propBoundSuff}
The set $\Ss(b)$ is finite for each $b\in\R^n$ if $\rho(|A|)<1$.
\end{proposition}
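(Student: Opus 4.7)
The plan is to reduce the statement to Proposition~\ref{propFinChar} and then to control the spectrum of $A+D_s$ via the spectral radius of $|A|$. By Proposition~\ref{propFinChar}, it suffices to prove that $A+D_s$ is nonsingular for every sign vector $s\in\{\pm1\}^n$.

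Since $D_s$ is itself nonsingular (indeed $D_s^{-1}=D_s$ and $|D_s|=I_n$), the nonsingularity of $A+D_s$ is equivalent to that of $D_s(A+D_s)=D_sA+I_n$. So I will show instead that $-1$ is not an eigenvalue of $D_sA$. For this I would invoke the standard Perron--Frobenius style inequality $\rho(M)\leq\rho(|M|)$ valid for any square matrix $M$; applying it to $M\coloneqq D_sA$ and using $|D_sA|=|D_s|\cdot|A|=|A|$ yields
\begin{align*}
\rho(D_sA)\leq\rho(|D_sA|)=\rho(|A|)<1.
\end{align*}
In particular, every eigenvalue of $D_sA$ lies strictly inside the unit disk, so $-1$ is not among them, and $D_sA+I_n$ is nonsingular as required.

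There is no substantive obstacle: the only nonelementary ingredient is the inequality $\rho(M)\leq\rho(|M|)$, which is classical (it follows, e.g., from applying the Gelfand formula to $|M^k|\leq|M|^k$ componentwise together with monotonicity of the spectral radius on nonnegative matrices). Everything else is a direct substitution and appeal to Proposition~\ref{propFinChar}.
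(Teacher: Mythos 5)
Your proof is correct, but it takes a different route from the paper's. You normalize $A+D_s$ to $D_sA+I_n$ and then invoke the classical eigenvalue-domination inequality $\rho(M)\leq\rho(|M|)$ with $M=D_sA$, using $|D_sA|=|A|$; this immediately places all eigenvalues of $D_sA$ strictly inside the unit disk, so $-1$ cannot be among them. The paper instead argues by contradiction at the level of a null vector: from $(A+D_s)x=0$ with $x\neq0$ it derives the subinvariance relation $|x|\leq|A|\cdot|x|$, perturbs $|A|$ to a positive matrix $B=|A|+\eps ee^T$, and applies the Perron--Frobenius/Collatz--Wielandt bound (if $y\leq By$ for some $y>0$ and $B>0$, then $\rho(B)\geq1$) together with continuity of the spectral radius. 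Both arguments rest on nonnegative-matrix theory, but yours is shorter and avoids the perturbation and limiting step, since $\rho(M)\leq\rho(|M|)$ packages all of that. What the paper's more hands-on technique buys is reusability: the same subinvariance argument carries over verbatim to the preconditioned criterion of Proposition~\ref{propBoundSuffC}, where one derives $|x|\leq(|CA|+|I_n-C|)\,|x|$ and the dominating matrix is no longer of the form $|M|$ for the relevant $M$, so your inequality would not apply directly there. As a minor point, your reduction step is clean ($D_s^{-1}=D_s$, so $A+D_s$ is nonsingular iff $D_sA+I_n$ is), and the cited justification of $\rho(M)\leq\rho(|M|)$ via $|M^k|\leq|M|^k$ and the Gelfand formula is standard; no gaps.
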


\begin{proof}
In view of Proposition~\ref{propFinChar}, suppose to the contrary that $x\not=0$ solves $Ax+D_sx=0$ for some $s\in\{\pm1\}^n$. Then $|x|=|-D_sx|=|Ax|\leq|A|\cdot|x|$. Define $B\coloneqq |A|+\eps ee^T>0$ for $\eps>0$ sufficiently small. Then $|x|\leq B |x|$, whence $y\leq By$ for $y\coloneqq B\cdot|x|>0$. By the Perron--Frobenius theory \cite{HorJoh1985,Mey2000} we have $\rho(B)\geq1$. Therefore $\rho(A)\geq1$ as well; a contradiction.
\end{proof}

We now extend the condition such that $C=I_n$ reduces the above case.

\begin{proposition}\label{propBoundSuffC}
The $\Ss(b)$ is finite for each $b\in\R^n$ if $\rho(|CA|+|I_n-C|)<1$ for certain $C\in\R^{n\times n}$.
\end{proposition}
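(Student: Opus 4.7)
The plan is to reduce to Proposition~\ref{propFinChar} and then mimic the Perron--Frobenius argument of Proposition~\ref{propBoundSuff}. Fix a matrix $C$ with $\rho(|CA|+|I_n-C|)<1$ and set $M\coloneqq |CA|+|I_n-C|$. By Proposition~\ref{propFinChar}, the task reduces to showing that $A+D_s$ is nonsingular for every $s\in\{\pm1\}^n$, so I would argue by contradiction, supposing $(A+D_s)x=0$ for some $s$ and some $x\neq 0$.

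The key algebraic move is the substitution $z\coloneqq D_s x$. Working with $x$ directly via the identity $x=-D_sAx$ and multiplying by $C$ produces the quantity $|CD_sA|$, which in general is incomparable with $|CA|$; one wants the sign matrix on the \emph{right} of $A$ rather than the left. Under the substitution, $|z|=|x|\neq 0$ and $z$ satisfies $(AD_s+I_n)z=0$, hence after multiplying by $C$ and using $z=(I_n-C)z+Cz$ one obtains
\begin{align*}
z=(I_n-C)z-CAD_s z.
\end{align*}
Taking componentwise absolute values and invoking the elementary identity $|CAD_s|=|CA|$ (right multiplication by a $\pm 1$ diagonal matrix preserves the absolute value of every entry), I arrive at
\begin{align*}
|z|\leq (|CA|+|I_n-C|)|z| = M|z|, \quad |z|\neq 0.
\end{align*}

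From here the Perron--Frobenius argument of Proposition~\ref{propBoundSuff} applies almost verbatim: perturb $M$ to the strictly positive matrix $B\coloneqq M+\eps ee^T$ for $\eps>0$ small, put $w\coloneqq B|z|>0$, observe $w\leq Bw$ (this uses $B\geq M$ and $|z|\leq M|z|$), conclude $\rho(B)\geq 1$, and let $\eps\to 0$ to contradict $\rho(M)<1$. The only nontrivial ingredient is spotting the substitution $z=D_s x$; once this is identified, everything else is routine bookkeeping together with the Perron--Frobenius trick already used in Proposition~\ref{propBoundSuff}. The main obstacle is precisely that initial move: without it, the matrix $C$ lands on the wrong side of $D_s$ and the hypothesis on $|CA|$ cannot be applied.
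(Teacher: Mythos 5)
Your proof is correct and is essentially the paper's own argument: the substitution $z\coloneqq D_sx$ leading to $z=(I_n-C)z-CAD_sz$ is just a renaming of the paper's step, which multiplies $Ax+D_sx=0$ by $C$ and isolates $D_sx=-CAx+(I_n-C)D_sx$ to reach the same inequality $|x|\leq(|CA|+|I_n-C|)|x|$, followed by the identical Perron--Frobenius perturbation from Proposition~\ref{propBoundSuff}. (Your worry about $|CD_sA|$ only arises if one isolates $x$ rather than $D_sx$; the paper sidesteps it the same way you do.)
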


\begin{proof}
Suppose to the contrary that $x\not=0$ solves $Ax+D_sx=0$ for some $s\in\{\pm1\}^n$. Then it solves $CAx+CD_sx=0$, from which $D_sx=-CAx+(I_n-C)D_sx$. We derive $|x|=|D_sx|\leq(|CA|+|I_n-C|)|x|$, and the rest is analogous to the proof of Proposition~\ref{propBoundSuff}.
\end{proof}

Surprisingly, it turns out that Proposition~\ref{propBoundSuffC} is not more general than Proposition~\ref{propBoundSuff}; both conditions hold simultaneously. The proof is inspired by the technique from~\cite{RexRoh1995}, but we cannot use it directly since matrix $A$ can be singular.

\begin{proposition}
If $\rho(|CA|+|I_n-C|)<1$ for certain $C\in\R^{n\times n}$, then $\rho(|A|)\leq \rho(|CA|+|I_n-C|)$.
\end{proposition}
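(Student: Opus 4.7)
The plan is to reduce the claim $\rho(|A|)\leq\rho(|CA|+|I_n-C|)=:\alpha$ to a Collatz--Wielandt-type inequality $|A|v\leq\alpha v$ for a strictly positive vector~$v$, from which the desired bound on the Perron root follows immediately. The starting identity is $A=CA+(I_n-C)A$, which, after taking entrywise absolute values and using $|(I_n-C)A|\leq|I_n-C|\cdot|A|$, gives
\begin{align*}
|A|v\leq |CA|v+|I_n-C|\cdot|A|v \quad\text{for every }v\geq 0.
\end{align*}

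The next observation is that the assumption $\alpha<1$ allows us to invert $I_n-|I_n-C|$ with a nonnegative inverse. Indeed, $0\leq|I_n-C|\leq|CA|+|I_n-C|$, so monotonicity of the spectral radius on nonnegative matrices yields $\rho(|I_n-C|)\leq\alpha<1$, and thus $(I_n-|I_n-C|)^{-1}=\sum_{k\geq 0}|I_n-C|^k\geq 0$ by the Neumann series.

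To get a \emph{strictly positive} test vector, I would apply the standard perturbation trick: set $M_\eps:=|CA|+|I_n-C|+\eps ee^T>0$ and let $v_\eps>0$ be its Perron eigenvector with eigenvalue $\alpha_\eps:=\rho(M_\eps)$. By continuity of the spectral radius, $\alpha_\eps\to\alpha$ as $\eps\to 0^+$, so $\alpha_\eps<1$ for small enough~$\eps$. Dropping the perturbation term gives $|CA|v_\eps+|I_n-C|v_\eps\leq\alpha_\eps v_\eps$. Substituting $v=v_\eps$ in the first displayed inequality, a short rearrangement shows
\begin{align*}
(I_n-|I_n-C|)(|A|v_\eps-\alpha_\eps v_\eps)\leq(\alpha_\eps-1)|I_n-C|v_\eps\leq 0,
\end{align*}
where $\alpha_\eps\leq 1$ is the only place the smallness assumption enters the algebra. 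Left-multiplying by the nonnegative matrix $(I_n-|I_n-C|)^{-1}$ yields $|A|v_\eps\leq\alpha_\eps v_\eps$, and since $v_\eps>0$ this forces $\rho(|A|)\leq\alpha_\eps$; letting $\eps\to 0^+$ completes the argument.

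The main obstacle I anticipate is the one hinted at after the statement: using the Perron eigenvector of $|CA|+|I_n-C|$ directly is tempting but fails because that vector need not be strictly positive (the matrix may be reducible), and the argument from \cite{RexRoh1995} relies on invertibility of $A$, which we do not have. Both difficulties are resolved by the pair of tricks above: the $\eps ee^T$ perturbation supplies a strictly positive Perron vector without losing the bound in the limit, and only $\rho(|I_n-C|)<1$ (not nonsingularity of $A$) is needed to invert $I_n-|I_n-C|$ via its Neumann series.
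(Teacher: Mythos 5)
Your proof is correct. The rearrangement at the heart of it checks out: from $(I_n-|I_n-C|)\,|A|v_\eps\leq|CA|v_\eps\leq\alpha_\eps v_\eps-|I_n-C|v_\eps$ one indeed gets $(I_n-|I_n-C|)(|A|v_\eps-\alpha_\eps v_\eps)\leq(\alpha_\eps-1)|I_n-C|v_\eps\leq0$, and the premultiplication by the nonnegative Neumann inverse, the Collatz--Wielandt bound $\rho(|A|)\leq\alpha_\eps$ from $|A|v_\eps\leq\alpha_\eps v_\eps$ with $v_\eps>0$, and the limit $\eps\to0^+$ all work as stated. The overall scaffolding is the same as the paper's: the $\eps ee^T$ perturbation to obtain a strictly positive Perron vector, the observation $\rho(|I_n-C|)\leq\rho(|CA|+|I_n-C|)<1$, and the Neumann-series inversion of $I_n-|I_n-C|$. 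Where you genuinely diverge is the decomposition of $A$: the paper first proves $C$ is nonsingular and expands $A=C^{-1}CA=\sum_{k\geq0}(I_n-C)^kCA$ to obtain the matrix bound $|A|\leq(I_n-|I_n-C|)^{-1}|CA|$, whereas you use the one-step identity $A=CA+(I_n-C)A$, which gives $(I_n-|I_n-C|)|A|\leq|CA|$ directly and is then resolved on the test vector. Your route buys a small but real simplification --- nonsingularity of $C$ is never needed (it is still a consequence of the hypothesis, but plays no role), and the infinite series expansion of $C^{-1}$ is avoided; the paper's route makes the intermediate bound $|A|\leq(I_n-|I_n-C|)^{-1}|CA|$ explicit as a standalone matrix inequality, which is slightly more informative but costs the extra preliminary step.
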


\begin{proof}
First we show that $C$ must be nonsingular, which follows from
\begin{align}\label{ineqPfPropBoundSuffEq}
\rho(I_n-C)
\leq \rho(|I_n-C|)
\leq \rho(|CA|+|I_n-C|)<1.
\end{align}
Now, define
$$
G\coloneqq  |CA|+|I_n-C|+\eps ee^T,\quad
\alpha\coloneqq  \rho(G)<1,
$$
where $\eps>0$ is small enough.
Since $G>0$, by the Perron--­Frobenius theorem there is $x>0$ such that $Gx=\alpha x$. 
%This implies
Using also $\alpha<1$ we derive
$$
|CA|x+\alpha |I_n-C|x
\leq (|CA|+|I_n-C|)x
<\alpha x,
$$
and from this 
\begin{align}\label{ineqPfPropBoundSuffEq2}
|CA|x<\alpha(I_n-|I_n-C|) x.
\end{align}
By the Neumann series theorem and \nref{ineqPfPropBoundSuffEq}, we get
$
(I_n-|I_n-C|)^{-1}=\sum_{k=0}^{\infty}|I_n-C|^k\geq0.
$ 
Hence premultiplying inequality \nref{ineqPfPropBoundSuffEq2} by $(I_n-|I_n-C|)^{-1}$ we obtain
$$
(I_n-|I_n-C|)^{-1}|CA|x<\alpha x.
$$
Now, from 
$$
A=C^{-1}CA=(I_n-(I_n-C))^{-1}CA=\sum_{k=0}^{\infty}(I_n-C)^k CA
$$
we derive
$$
|A|
\leq \sum_{k=0}^{\infty}|I_n-C|^k|CA|
=(I_n-|I_n-C|)^{-1}|CA|.
$$
Putting all together, we obtain
$$
|A|x \leq (I_n-|I_n-C|)^{-1}|CA|x < \alpha x,
$$
Therefore $\rho(|A|)<\alpha$, from which the statement follows due to continuity of the spectral radius \cite{HorJoh1985,Mey2000}.
\end{proof}

%%%
\paragraph{Isolated solutions} 
We say that a solution $x^*\in\Ss$ is \emph{isolated} if $N(x^*)\cap\Ss=\{x^*\}$ for some neighborhood $N(x^*)$ of $x^*$.

\begin{proposition}
Let $x^*\in\Ss$ and denote $s^*\coloneqq\sgn(x^*)$. Then $x^*$ is an isolated solution if and only if $A+D_s$ is nonsingular for each $s\in\{\pm1\}^n$ such that $D_s s^*=s^*$.
\end{proposition}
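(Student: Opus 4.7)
The plan is to prove both directions of the equivalence using the orthant decomposition of $\Ss$ from Proposition~\ref{propDecomp}. Let $J = \{i : x^*_i = 0\}$. The condition $D_s s^* = s^*$ picks out precisely the admissible sign vectors $s \in \{\pm1\}^n$, namely those agreeing with $s^*$ on $J^c$; these are exactly the orthants whose closures contain $x^*$, and for each such $s$ we have $|x^*| = D_s x^*$ and therefore $(A+D_s)x^* = b$.

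For the \quo{if} direction (nonsingularity of every admissible $A+D_s$ implies $x^*$ is isolated) I would argue by contraposition. If $x^*$ is not isolated, pick a sequence $x^k \in \Ss \setminus \{x^*\}$ with $x^k \to x^*$ and pass to a subsequence on which the pattern $\sgn(x^k)$ stabilizes to some $\tilde s \in \{-1,0,1\}^n$. Since $x^*_i \neq 0$ for $i \notin J$, necessarily $\tilde s_i = s^*_i$ on $J^c$. Extending $\tilde s$ to a sign vector $s \in \{\pm1\}^n$ by filling in its zero entries arbitrarily with $+1$ produces an admissible $s$ satisfying $|x^k| = D_s x^k$ and $|x^*| = D_s x^*$; subtracting the two AVE identities gives $(A+D_s)(x^k - x^*) = 0$ with $x^k - x^* \neq 0$, so $A+D_s$ is singular.

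For the \quo{only if} direction I would argue the contrapositive by explicit construction. Suppose $A+D_{s_0}$ is singular for some admissible $s_0$ and pick $y \neq 0$ in $\ker(A+D_{s_0})$; then $x^* + ty$ automatically satisfies $(A+D_{s_0})(x^* + ty) = b$ for every $t \in \R$. The task is to choose $t \neq 0$ of small absolute value so that $x^* + ty$ lies in the closed orthant $\{D_{s_0} x \geq 0\}$, thus becoming a genuine AVE solution and witnessing non-isolation. For $i \notin J$ this is automatic once $|t|$ is small enough, because $s_0^i x^*_i = |x^*_i| > 0$; for $i \in J$ it reduces to the sign condition $s_0^i t y_i \geq 0$. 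When all products $s_0^i y_i$ for $i \in J$ share a common sign, a suitably signed small $t$ delivers a nearby solution distinct from $x^*$.

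The main obstacle is the mixed-sign case, in which the scalars $(s_0^i y_i)_{i \in J}$ take both positive and negative values so that neither $+y$ nor $-y$ fits into the closed orthant of~$s_0$. Here I would exploit the remaining freedom: for indices $i \in J$ with $y_i = 0$, the entry $s_0^i$ may be toggled without affecting either singularity of $A+D_{s_0}$ or admissibility, and different elements of a higher-dimensional kernel may redistribute the sign pattern of the products $s_0^i y_i$. Combined with the fact that the admissible closed orthants cover a full neighborhood of $x^*$, a combinatorial argument should produce an admissible $s$ together with a kernel vector whose sign pattern on $J$ is uniform with respect to~$s$, completing the construction of a nearby solution distinct from~$x^*$.
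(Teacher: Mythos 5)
Your reading of the admissibility condition $D_ss^*=s^*$ as selecting exactly the sign vectors whose closed orthants contain $x^*$ matches the paper's intent, and your proof of the \quo{if} direction is complete and correct: the admissible closed orthants cover a neighbourhood of $x^*$, and your subsequence argument extracts a nonzero kernel vector of some admissible $A+D_s$ from any sequence of solutions converging to $x^*$. This is essentially the paper's argument for that direction, written out more carefully.

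The mixed-sign obstacle you flag in the \quo{only if} direction, however, is not something a cleverer combinatorial argument will remove: that direction of the proposition is in fact false, and your construction breaks down exactly where you suspect. Take
\begin{align*}
A=\begin{pmatrix}0&1\\2&1\end{pmatrix},\qquad b=0,\qquad x^*=0 .
\end{align*}
The first equation gives $x_2=-|x_1|$, hence $|x_2|=|x_1|$, and the second equation then reads $2x_1=0$; thus $\Ss=\{0\}$ and $x^*$ is isolated. Since $s^*=0$, every $s\in\{\pm1\}^2$ is admissible, yet $A+D_{(1,1)}=A+I_2$ is singular with kernel spanned by $y=(1,-1)^T$. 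Here $J=\{1,2\}$, the products $s_iy_i$ take both signs, the kernel is one-dimensional, and the matrices $A+D_s$ for the other three sign vectors are all nonsingular, so none of the escape routes proposed in your last paragraph is available. Geometrically, the line $\{x\mmid (A+D_{(1,1)})x=b\}$ meets the nonnegative orthant only in the boundary point $x^*=0$, so singularity produces no nearby solutions. For what it is worth, the paper's own proof of this direction makes precisely the leap you were uncomfortable with: it asserts that singularity of $A+D_s$ yields infinitely many solutions in the orthant of $s$, an inference that is valid only when the affine solution set meets the orthant in more than one point (for instance when $x^*$ lies in the interior of the orthant, which is the situation exploited in Proposition~\ref{propFinChar} but not here). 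The stated equivalence does hold when $x^*$ has no zero components, since then the unique admissible orthant contains $x^*$ in its interior and your small-$t$ construction goes through; in the general case only the \quo{if} direction survives.
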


\begin{proof}
The condition says that we inspect all orthants, where $x^*$ lies (even on their borders). The nonsingularity assumption then implies that $x^*$ is a unique solution lying in the interior of a union of certain orthants. Conversely, if $x^*$ is isolated, then it is a unique solution of each processed orthant. Thus $A+D_s$ is nonsingular since otherwise there are infinitely many solutions in the orthant given by~$s$.
\end{proof}

By the proposition, one has to process $2^{k}$ orthants, where $k$ is the cardinality of $\{i\mmid x^*_i=0\}$. 
As we will see later in Proposition~\ref{propBoundNp}, checking if $x^*\coloneqq0$ is an isolated solution is a co-NP-hard problem on a class of AVEs with $b=0$ and $A$ having rank one.

%%%%%%%%%%%%%%%%%%%%%%%%%%%%%%%%%%%%%%%%%%%%%%%%%%%%%%%%%%%%%%% 
\section{Boundedness}

Obviously, regularity of $[A\pm I_n]$ implies boundedness of the solution set $\Ss(b)$ for an arbitrary right-hand side vector $b\in\R^n$. The converse implication is not true in general; simply consider $A=0$. 
Below, we present a complete characterization of boundedness. By convention, the empty set is bounded.

\begin{proposition}
The set $\Ss(b)$ is bounded for each $b\in\R^n$ if and only if $Ax+|x|=0$ has only the trivial solution $x=0$.
\end{proposition}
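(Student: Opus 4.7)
The plan is to exploit the orthant decomposition from Proposition~\ref{propDecomp}, which reduces unboundedness of $\Ss(b)$ to the existence of a nonzero recession direction of one of the orthant polyhedra $\{x\mmid (A+D_s)x=b,\ D_s x\geq 0\}$. The key observation that ties the two statements together is that for any $s\in\{\pm1\}^n$, the condition $D_s d\geq 0$ forces $|d|=D_s d$ componentwise, so $(A+D_s)d=0$ is exactly the statement $Ad+|d|=0$.

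For the easy direction, I would suppose that $Ax+|x|=0$ admits a nontrivial solution $x^*\neq 0$. Then for every scalar $t\geq 0$, the vector $tx^*$ satisfies $A(tx^*)+|tx^*|=t(Ax^*+|x^*|)=0$, so the entire ray $\{tx^*\mmid t\geq 0\}$ lies in $\Ss(0)$, which is therefore unbounded. This gives a specific $b$ (namely $b=0$) for which $\Ss(b)$ is unbounded, contradicting the hypothesis.

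For the converse, I would assume $\Ss(b)$ is unbounded for some $b\in\R^n$. By Proposition~\ref{propDecomp}, $\Ss(b)$ is a finite union (over $s\in\{\pm1\}^n$) of the polyhedra $P_s\coloneqq\{x\mmid (A+D_s)x=b,\ D_s x\geq 0\}$, so at least one $P_s$ must be unbounded. Standard polyhedral theory then provides a nonzero recession direction $d\in\R^n$ of $P_s$, i.e., $(A+D_s)d=0$ and $D_s d\geq 0$. Since $s\in\{\pm 1\}^n$, the inequalities $s_i d_i\geq 0$ give $|d_i|=s_i d_i$ for every $i$, hence $|d|=D_s d$. Substituting, $Ad+|d|=(A+D_s)d=0$, so $d\neq 0$ is a nontrivial solution of $Ax+|x|=0$.

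There is no real obstacle here; the only point that requires a moment's thought is that a recession direction of $P_s$ automatically satisfies $|d|=D_s d$ because it lies in the closed orthant $\{D_s x\geq 0\}$. Everything else is a direct application of Proposition~\ref{propDecomp} and the linearity of the absolute value along a ray inside a fixed orthant.
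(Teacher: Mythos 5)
Your proof is correct and follows essentially the same route as the paper: decompose $\Ss(b)$ into the orthant polyhedra of Proposition~\ref{propDecomp} and identify nontrivial solutions of $Ax+|x|=0$ with nonzero recession directions of those polyhedra. If anything, your version is slightly more careful than the paper's, since by exhibiting the ray $\{tx^*\mmid t\geq0\}\subseteq\Ss(0)$ you sidestep the degenerate case where an orthant piece is empty (and hence bounded) despite having a nontrivial recession cone.
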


\begin{proof}
By Proposition~\ref{propDecomp}, $\Ss(b)$ is bounded if and only if the set described by 
$$
(A+D_s)x=b,\ \ D_sx\geq0
$$
is bounded for each $s\in\{\pm1\}^n$. This set is bounded if and only if the recession cone characterized by
$$
(A+D_s)x=0,\ \ D_sx\geq0
$$
has only the trivial solution (cf.\ \cite{Schr1998}), from which the statement follows.
\end{proof}

Deciding on boundedness is intractable even for rank-one matrices.

\begin{proposition}\label{propBoundNp}
Checking whether $Ax+|x|=0$ has a non-trivial solution is NP-hard on a class of problems with $A$ having natural entries and rank one.
\end{proposition}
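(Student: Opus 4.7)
The plan is to exhibit a polynomial-time reduction from PARTITION to the problem restricted to rank-one matrices with natural entries, paralleling the reduction used in the preceding sibling proposition of this paper.

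First, I would restrict to matrices of the form $A=ev^T$ with $v\in\N^n$, which are rank one with natural entries. For such $A$, the equation $Ax+|x|=0$ becomes $(v^Tx)\,e+|x|=0$. Letting $\alpha\coloneqq v^Tx$, this forces $\alpha\leq 0$ and $|x_i|=-\alpha$ for all $i$. In particular, a non-trivial solution $x\neq0$ forces $\alpha<0$; rescaling by $-\alpha$ and setting $s\coloneqq\sgn(x)$ yields $s\in\{\pm1\}^n$ with $v^Ts=-1$. Conversely, any $s\in\{\pm1\}^n$ with $v^Ts=-1$ delivers a non-trivial solution $x\coloneqq s$, since then $Ax=-e$ and $|x|=e$. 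Thus the decision problem reduces to asking whether there exists $s\in\{\pm1\}^n$ with $v^Ts=-1$.

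Next, I would show this \quo{signed sum equals $-1$} problem is NP-hard by reducing from PARTITION: given $u_1,\dots,u_m\in\N$ with even total $U$, set
\begin{align*}
v_i\coloneqq 2u_i\ \ (i=1,\dots,m), \qquad v_{m+1}\coloneqq 1.
\end{align*}
Writing $N\coloneqq\{i\mmid s_i=-1\}$, the condition $\sum_i s_iv_i=-1$ is equivalent to $2\sum_{i\in N}v_i=\sum_iv_i+1=2U+2$, i.e., $\sum_{i\in N}v_i=U+1$. A parity check shows that index $m+1$ must lie in $N$ (otherwise $\sum_{i\in N}v_i$ would be even while $U+1$ is odd); removing it, the condition becomes $\sum_{i\in N\setminus\{m+1\}}u_i=U/2$, i.e., a valid partition. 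Conversely, any partition extends in the obvious way. This gives the polynomial-time reduction.

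The main step requiring care is the parity argument that pins index $m+1$ into $N$; once that is in place, the bijection with partitions is immediate. Note that this is essentially the same reduction template as the one already used in the earlier proposition of the paper concerning nonsingularity of $A+D_s$, so the NP-hardness of the intermediate signed-sum problem can alternatively be invoked as having been established there.
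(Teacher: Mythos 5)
Your proposal is correct and takes essentially the same route as the paper: restrict to $A=ev^T$ with $v\in\N^n$, observe that a non-trivial solution of $(v^Tx)e+|x|=0$ exists if and only if $v^Ts=-1$ for some $s\in\{\pm1\}^n$, and then invoke NP-hardness of this signed-sum (subset-sum-variant) problem. The only difference is that you spell out the PARTITION reduction (with the doubling-plus-extra-element parity trick) that the paper simply cites as a known NP-hard variant of subset sum; that added detail is correct and harmless.
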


\begin{proof}
Consider the matrix in the form $A=ev^T$ for some $v\in\N^n$. The AVE $Ax+|x|=0$ takes the form of $(v^Tx)e+|x|=0$. It has a non-trivial solution if and only if there is a solution such that $|x|=e$, that is, $x=s$ for some $s\in\{\pm1\}^n$. Equivalently, $v^Tx=v^Ts=-1$. Eventually, we again utilize the variant of the NP-hard subset sum problem, in which natural numbers $v_1,\dots,v_n$ are given, and one asks to split them into two subsets the sums of which differ by one.
\end{proof}

Naturally, finiteness of $\Ss(b)$ implies boundedness of $\Ss(b)$. The converse implication does not hold in general. This is illustrated in Figure~\ref{figSolSet1}, where $\Ss(b)$ is bounded but infinite; it can be observed that $\Ss(b)$ is bounded for every $b\in\R^2$ there. 

Thus, for checking boundedness of $\Ss(b)$, we can apply the conditions for finiteness. 
So far, there is no sufficient condition known that would imply boundedness but not necessarily finiteness.

%%%%%%%%%%%%%%%%%%%%%%%%%%%%%%%%%%%%%%%%%%%%%%%%%%%%%%%%%%%%%%% 
\section{Convexity and connectedness}

%%%
\paragraph{Convexity} 
When $[A\pm I_n]$ is regular, the solution set $\Ss(b)$ is a singleton and hence convex for every $b\in\R^n$. The converse implication is not valid: for $A=\left(\begin{smallmatrix}1&2\\2&1\end{smallmatrix}\right)$, the interval matrix $[A\pm I_n]$ is not regular, but $\Ss(b)$ is convex for every $b\in\R^n$. 

Below, we provide a complete characterization of convexity. 
Again, by convention, the empty set is considered as convex.

\begin{proposition}\label{propConvOrth}
The set $\Ss$ is convex if and only if it is located in one orthant only, i.e., there is $s\in\{\pm1\}^n$ such that $D_sx\geq0$ for each $x\in\Ss$.
\end{proposition}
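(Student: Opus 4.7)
The plan is to split into the two implications and note that the easy direction falls straight out of Proposition~\ref{propDecomp}. For the \quo{If} direction, assume all of $\Ss$ sits in the orthant $\{x\mmid D_sx\geq0\}$ for some $s\in\{\pm1\}^n$. Then by Proposition~\ref{propDecomp}, $\Ss$ coincides with the solution set of the linear system $(A+D_s)x=b,\ D_sx\geq0$, which is a convex polyhedron, hence convex.

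For the \quo{Only if} direction, I would argue by contradiction. Suppose $\Ss$ is convex but is not contained in any single orthant. Then there exist $x,y\in\Ss$ and some coordinate $i$ with $x_i>0$ and $y_i<0$ (this is the precise combinatorial reformulation of \quo{not in any single orthant}, since choosing a sign $s_j$ for each $j$ amounts to picking a consistent sign on the $j$th coordinates of all points of $\Ss$).

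The key computational step is the following identity. For every $\lambda\in[0,1]$, set $z\coloneqq (1-\lambda)x+\lambda y$. Since $\Ss$ is convex, $z\in\Ss$, i.e., $Az+|z|=b$. Combining this with $Ax+|x|=b$ and $Ay+|y|=b$ and using linearity of $A$ in $z$, the $Az$ terms cancel and one obtains
\begin{align*}
|(1-\lambda)x+\lambda y|=(1-\lambda)|x|+\lambda|y|
\end{align*}
for every $\lambda\in[0,1]$, where the equality is understood coordinatewise. Reading off the $i$th coordinate and using the elementary fact that $|\alpha+\beta|=|\alpha|+|\beta|$ forces $\alpha$ and $\beta$ to share a common sign, I get that $x_i$ and $y_i$ must be of the same sign, contradicting $x_i>0>y_i$.

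The only delicate step is the reformulation \quo{not contained in any single orthant $\Leftrightarrow$ some coordinate is strictly positive for one solution and strictly negative for another}; this needs a brief verification, but it is immediate since an orthant $\{D_sx\geq0\}$ can accommodate a point $x$ with $x_i=0$ for either choice of $s_i$, so the only obstruction to picking a common $s$ is a strict sign conflict in some coordinate. After that, everything else is routine linear algebra on the AVE.
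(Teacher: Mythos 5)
Your proof is correct and follows essentially the same route as the paper: the \quo{If} direction via Proposition~\ref{propDecomp}, and the \quo{Only if} direction by deriving $|(1-\lambda)x+\lambda y|=(1-\lambda)|x|+\lambda|y|$ coordinatewise from convexity and concluding that no coordinate can change strict sign across $\Ss$. The only difference is cosmetic (you argue by contradiction and spell out the reformulation of \quo{not in one orthant}, which the paper leaves implicit); just make sure to take $\lambda$ strictly in $(0,1)$ so that both terms $(1-\lambda)x_i$ and $\lambda y_i$ are nonzero when you invoke the equality case of the triangle inequality.
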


\begin{proof}
\quo{If.} 
Obvious in view of Proposition~\ref{propDecomp}.

\quo{Only if.} 
Let $x^1,x^2\in\Ss$, $\lambda^1,\lambda^2>0$, $\lambda^1+\lambda^2=1$. By definition and convexity of $\Ss$ we have
\begin{align*}
Ax^1+|x^1| &= b,\\
Ax^2+|x^2| &= b,\\
A(\lambda^1x^1+\lambda^2x^2)+|\lambda^1x^1+\lambda^2x^2| &= b,
\end{align*}
from which
\begin{align*}
\lambda^1|x^1|+\lambda^2|x^2|=|\lambda^1x^1+\lambda^2x^2|.
\end{align*}
Hence for every $i=1,\dots,n$ we have
\begin{align*}
\lambda^1|x^1_i|+\lambda^2|x^2_i|=|\lambda^1x^1_i+\lambda^2x^2_i|.
\end{align*}
However, this is possible if and only if $x^1_ix^2_i\geq0$.
\end{proof}

In the expression \nref{condConvMatA} below, the condition $D_{x^1}x^2\geq0$ equivalently reads $x^1_ix^2_i\geq0$ for every $i=1,\dots,n$.

\begin{proposition}
The set $\Ss(b)$ is convex for each $b\in\R^n$ if and only if every $x^1,x^2\in\R^n$ satisfy
\begin{align}\label{condConvMatA}
A(x^1-x^2)=|x^2|-|x^1|\ \Rightarrow\ D_{x^1}x^2\geq0.
\end{align}
\end{proposition}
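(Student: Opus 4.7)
The plan is a direct two-way argument translating the hypothesis of the implication into membership in a common solution set, and its conclusion into the single-orthant condition of Proposition~\ref{propConvOrth}.

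For the ``Only if'' direction, suppose $\Ss(b)$ is convex for every $b\in\R^n$, and let $x^1,x^2\in\R^n$ satisfy $A(x^1-x^2)=|x^2|-|x^1|$. Rewriting this as $Ax^1+|x^1|=Ax^2+|x^2|$, I would set $b\coloneqq Ax^1+|x^1|$, so that both $x^1,x^2\in\Ss(b)$. Since $\Ss(b)$ is convex, Proposition~\ref{propConvOrth} yields an $s\in\{\pm1\}^n$ with $D_sx\geq0$ for every $x\in\Ss(b)$; in particular $s_ix^1_i\geq0$ and $s_ix^2_i\geq0$ for each $i$, so $x^1_ix^2_i\geq0$, which is exactly $D_{x^1}x^2\geq0$.

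For the ``If'' direction, fix $b\in\R^n$ and any two solutions $x^1,x^2\in\Ss(b)$. Subtracting the defining equations gives $A(x^1-x^2)=|x^2|-|x^1|$, so the hypothesis of \nref{condConvMatA} applies and delivers $D_{x^1}x^2\geq0$, i.e.\ $x^1_ix^2_i\geq0$ for every $i$. This componentwise sign-compatibility means $x^1$ and $x^2$ lie in a common closed orthant, and hence for every $\lambda\in[0,1]$ we have the identity
\begin{align*}
|\lambda x^1+(1-\lambda)x^2|=\lambda|x^1|+(1-\lambda)|x^2|.
\end{align*}
Then
\begin{align*}
A(\lambda x^1+(1-\lambda)x^2)+|\lambda x^1+(1-\lambda)x^2|
=\lambda(Ax^1+|x^1|)+(1-\lambda)(Ax^2+|x^2|)=b,
\end{align*}
so $\lambda x^1+(1-\lambda)x^2\in\Ss(b)$, proving convexity.

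The only subtle point is the coordinate-wise equality $|\lambda x^1+(1-\lambda)x^2|=\lambda|x^1|+(1-\lambda)|x^2|$, but this is immediate from $x^1_ix^2_i\geq0$: on every coordinate $i$, both $x^1_i$ and $x^2_i$ are simultaneously $\geq0$ or $\leq0$, so their convex combination has the same sign and the absolute value distributes. Thus no additional nonsingularity or full-orthant-occupancy is required, and the proof is essentially a bookkeeping exercise around Proposition~\ref{propConvOrth}.
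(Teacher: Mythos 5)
Your proof is correct and follows essentially the same route as the paper: both directions rest on the observation that two solutions of $Ax+|x|=b$ satisfy the hypothesis of \nref{condConvMatA}, and that the conclusion $D_{x^1}x^2\geq0$ is exactly coordinatewise sign-compatibility, combined with Proposition~\ref{propConvOrth}. The only cosmetic differences are that you argue the \quo{only if} direction directly rather than by contradiction, and in the \quo{if} direction you verify the convex combination by hand instead of citing Proposition~\ref{propConvOrth}; both are fine.
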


\begin{proof}
\quo{If.} 
Let $b\in\R^n$ and $x^1,x^2\in\Ss(b)$. Then 
\begin{align*}
Ax^1+|x^1| &= b,\\
Ax^2+|x^2| &= b,
\end{align*}
whence $A(x^1-x^2)=|x^2|-|x^1|$. By \nref{condConvMatA} we have $D_{x^1}x^2\geq0$. So all solutions lie in one orthant and $\Ss(b)$ is convex by Proposition~\ref{propConvOrth}.

\quo{Only if.} 
Suppose to the contrary that 
$A(x^1-x^2)=|x^2|-|x^1|$ for some $x^1,x^2\in\R^n$ and $x^1_ix^2_i<0$ for some $i\in\seznam{n}$. Define $b^*\coloneqq Ax^1+|x^1|=Ax^2+|x^2|$. So we have $x^1,x^2\in\Ss(b^*)$. Since $x^1,x^2$ lie in different orthants, $\Ss(b^*)$ is not convex; a contradiction.
\end{proof}

Notice that condition \nref{condConvMatA} can be checked by a decomposition of space $\R^{2n}$ into orthants since then the condition becomes linear. There is little hope for a simpler condition as the convexity is hard to verify.

\begin{proposition}
Checking convexity of $\Ss$ is co-NP-hard an a class of problems with $b=0$ and $A$ having rank one and natural entries.
\end{proposition}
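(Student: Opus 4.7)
The plan is to reduce the NP-hard subset sum variant used in the paper's earlier hardness proofs (given $v_1,\dots,v_n\in\N$, decide whether $v^Ts=-1$ for some $s\in\{\pm1\}^n$) to non-convexity of $\Ss$. Following the pattern of Proposition~\ref{propBoundNp}, I would take $b=0$ and $A=e\tilde v^T$ for a carefully augmented $\tilde v\in\N^{n+2}$, so that $A$ is rank one with natural entries, and arrange things so that $\Ss$ is non-convex if and only if the given subset sum has a solution.

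For any $A=e\tilde v^T$ and $b=0$ the equation $(\tilde v^Tx)e+|x|=0$ forces every $|x_i|$ to be a common value $\beta=-\tilde v^Tx\geq0$, so exactly as in the proof of Proposition~\ref{propBoundNp} the solution set consists of the origin together with the open rays $\{\beta\tilde s:\beta>0\}$ over those $\tilde s\in\{\pm1\}^{n+2}$ satisfying $\tilde v^T\tilde s=-1$. Each such ray lies entirely in the open orthant determined by $\tilde s$, so for two distinct sign vectors $\tilde s^1\neq\tilde s^2$ no single closed orthant contains both rays. Combined with the fact that $0$ lies in every closed orthant, Proposition~\ref{propConvOrth} therefore implies that $\Ss$ is convex iff at most one $\tilde s$ satisfies $\tilde v^T\tilde s=-1$ (the cases of zero or one sign-vector solution give $\Ss=\{0\}$ or a single ray from the origin, both convex).

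The core step is the choice $\tilde v=(v_1,\dots,v_n,k,k)$ with $k:=1+\sum_{i=1}^n v_i$. Splitting on $(s_{n+1},s_{n+2})\in\{\pm1\}^2$, the equation $\tilde v^T\tilde s=-1$ reduces to $v^Ts\in\{-1-2k,\,-1,\,-1,\,-1+2k\}$. The outer values satisfy $|{-1\pm2k}|>\sum_i v_i$ and are therefore unreachable by any $v^Ts$, while the two middle cases each reproduce exactly the solutions of the original subset sum. Hence the number of $\tilde s$ with $\tilde v^T\tilde s=-1$ equals $2\,|\{s\in\{\pm1\}^n:v^Ts=-1\}|$, which is $\geq2$ precisely when the subset sum is a yes-instance. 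Combined with the previous paragraph, $\Ss$ is non-convex iff the subset sum has a solution, giving co-NP-hardness of convexity.

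The main obstacle is arranging this exact doubling of sign-vector solutions without leaving the class of rank-one matrices with natural entries. A simpler padding like $\tilde v=(v,0)$ would also work but relies on $0\in\N$; duplicating a sufficiently large $k$ makes the extra two cases of $(s_{n+1},s_{n+2})$ infeasible while preserving the naturality and rank-one structure of $A$.
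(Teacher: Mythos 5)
Your proof is correct and follows essentially the same route as the paper: both reduce from the subset-sum variant used in Proposition~\ref{propBoundNp} by padding the rank-one matrix $ev^T$ with $b=0$ so that a yes-instance forces at least two solution rays lying in distinct orthants (hence non-convexity via Proposition~\ref{propConvOrth}), while a no-instance leaves only the origin. The only difference is the padding: the paper adjoins a single variable via the equation $v^Tx+|y|=0$ (introducing a zero entry in the augmented rank-one matrix), whereas your two extra coordinates with the large weight $k$ achieve the same doubling of sign-vector solutions while keeping all entries strictly positive, a minor but legitimate tightening of the \quo{natural entries} claim.
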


\begin{proof}
In the proof of Proposition~\ref{propBoundNp}, we proved NP-hardness of checking whether 
the AVE $Ax+|x|=0$ has a non-trivial solution, where $A=ev^T$ has size $n\times n$. Consider now the AVE in $n+1$ variables
\begin{align}\label{eqPfPropConvNp}
(ev^T)x+|x|=0,\ v^Tx+|y|=0.
\end{align}
If the AVE $Ax+|x|=0$ has only the trivial solution, then \nref{eqPfPropConvNp} has only the trivial solution, too. If the AVE $Ax+|x|=0$ has a non-trivial solution $x^*$, then by the proof of Proposition~\ref{propBoundNp} we can assume that $v^Tx^*=-1$. Now, the equation $v^Tx+|y|=0$ reads $|y|=1$. Therefore, \nref{eqPfPropConvNp} has the solutions $(x^*,1)$ and $(x^*,-1)$, but none of their strict convex combinations.
\end{proof}

%%%
\paragraph{Connectedness}
The solution set of AVE is connected if there exists a unique solution. Another simple condition is given below. It seems to be a hard problem to characterize connectedness in general.

\begin{proposition}
If $b=0$, then the solution set of AVE is connected.
\end{proposition}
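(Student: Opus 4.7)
The plan is to exploit the homogeneity of the system when $b=0$. The set $\Ss$ is nonempty because $x=0$ trivially solves $Ax+|x|=0$, so it suffices to show that every solution $x^*\in\Ss$ is connected to the origin via a continuous path inside $\Ss$.

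The natural candidate path is the straight line segment $\gamma(t)\coloneqq tx^*$ for $t\in[0,1]$. I would verify directly that $\gamma(t)\in\Ss$ for every $t\in[0,1]$: using linearity of $x\mapsto Ax$ and positive homogeneity of the absolute value (so that $|tx^*|=t|x^*|$ when $t\geq 0$), we obtain
\begin{align*}
A(tx^*)+|tx^*| = t\bigl(Ax^*+|x^*|\bigr) = t\cdot 0 = 0.
\end{align*}
Thus the whole segment from $0$ to $x^*$ is contained in $\Ss$, so $\Ss$ is star-shaped with respect to the origin, which implies path-connectedness, and hence connectedness.

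There is essentially no obstacle here; the argument is a one-line computation once one notices that $|\cdot|$ is positively homogeneous. The only subtlety worth mentioning is that positive homogeneity requires $t\geq0$, which is exactly why we connect through the origin rather than directly between two arbitrary solutions (indeed, Figure~\ref{figSolSet2} and the $x+|x|=0$ example show that $\Ss$ need not be convex even when $b=0$, so a direct segment between two solutions need not work).
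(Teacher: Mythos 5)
Your proof is correct. It rests on the same key idea as the paper's one-line argument --- every solution is connected to the origin --- but you verify it by a direct positive-homogeneity computation showing $\Ss$ is star-shaped about $0$, whereas the paper argues via the orthant decomposition of Proposition~\ref{propDecomp} (each orthant's piece is a convex polyhedron containing the origin); your version is slightly more self-contained, and your closing remark correctly identifies why one must route the path through the origin rather than joining two solutions directly.
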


\begin{proof}
In each orthant, the corresponding solution set is connected and contains the origin, via which is the overall solution set connected.
\end{proof}

%%%%%%%%%%%%%%%%%%%%%%%%%%%%%%%%%%%%%%%%%%%%%%%%%%%%%%%%%%%%%%% 
\section{Optimization reformulations}

AVE can be formulated as an optimization problem~\cite{ManMey2006,Man2007b,Man2007,Man2015,ZamHla2021a}. Each of the problems below has the optimal value $0$ if and only if AVE is solvable. Moreover, the solution of AVE then can be easily deduced then. %However, the problems are nonconvex.
\begin{align}
\label{minSum}
&\min\ e^T(b-Ax-|x|) \st (A+I_n)x\leq b,\ (A-I_n)x\leq b,\\
\label{minAbsSq}
&\min\ (b-Ax-|x|)^T(b-Ax-|x|) \st (A+I_n)x\leq b,\ (A-I_n)x\leq b,\\
\label{minPM}
&\min\ (b-Ax-x)^T(b-Ax+x) \st (A+I_n)x\leq b,\ (A-I_n)x\leq b,\\
&\min\ x^Ty \st (A+I_n)x-(A-I_n)y= b,\ x,y\geq0.
\label{minBilin}
\end{align}
Problem \nref{minSum} is a concave optimization problem with a piecewise linear objective function. The objective function of problem \nref{minAbsSq} is piecewise convex quadratic, but it is not convex on the whole space in general (consider, e.g., the example $A=0$ and $b=1$). Problems \nref{minPM} and \nref{minBilin} are quadratic problems. The objective function of \nref{minPM} reads
$$
(b-Ax-x)^T(b-Ax+x)=x^T(A^TA-I_n)x-2b^TAx+b^Tb,
$$
so it is convex if and only if $A^TA-I_n$ is positive semidefinite. That is the reason why condition \nref{condRegSuff2} implies an efficiently solvable AVE.

In formulations \nref{minSum} to \nref{minPM}, the feasibility system
\begin{align}\label{minFeasMna}
(A+I_n)x\leq b,\ (A-I_n)x\leq b
\end{align}
appears. It plays an important role in the analysis and development of properties of AVE. Notice that it equivalently draws
\begin{align*}
Ax+|x|\leq b.
\end{align*}

\begin{proposition}
For any formulation \nref{minSum} to \nref{minPM} we have that the optimization problem is feasible for each $b\in\R^n$ of and only if the system $Ax+|x|<0$ is solvable.
\end{proposition}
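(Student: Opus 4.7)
My plan is to exploit the fact that the feasibility region common to \nref{minSum}--\nref{minPM} is exactly $\{x\mmid Ax+|x|\leq b\}$, as the paper has already noted, and then to use the positive homogeneity of the map $x\mapsto Ax+|x|$.

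For the \quo{only if} direction, I would instantiate the hypothesis at a strictly negative vector such as $b\coloneqq -e$. By assumption, there exists $x\in\R^n$ with $Ax+|x|\leq -e<0$, which immediately provides a solution to $Ax+|x|<0$.

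For the \quo{if} direction, let $x^0$ be a witness of $Ax^0+|x^0|<0$, and set $c\coloneqq Ax^0+|x^0|$, so $c<0$ componentwise. Given an arbitrary $b\in\R^n$, I would try the ansatz $x\coloneqq \lambda x^0$ for $\lambda>0$. Using positive homogeneity,
\begin{align*}
A(\lambda x^0)+|\lambda x^0|=\lambda(Ax^0+|x^0|)=\lambda c.
\end{align*}
Since every component of $c$ is strictly negative, $\lambda c_i\to -\infty$ as $\lambda\to\infty$, so choosing $\lambda$ large enough (e.g., $\lambda\geq \max_i b_i/c_i$ when $b_i<0$, and any $\lambda>0$ suffices for coordinates with $b_i\geq 0$) yields $\lambda c\leq b$, hence $x=\lambda x^0$ is feasible.

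No genuine obstacle is expected here; the only point to keep clean is to observe the equivalence between the linear system \nref{minFeasMna} and the inequality $Ax+|x|\leq b$ (already recorded in the paper) and to argue the scaling rigorously on each coordinate, treating separately indices where $b_i\geq 0$ and $b_i<0$.
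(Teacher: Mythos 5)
Your proposal is correct and matches the paper's own argument essentially verbatim: the \quo{only if} direction instantiates $b\coloneqq -e$, and the \quo{if} direction scales a strict solution $x^0$ by a sufficiently large $\lambda>0$ using positive homogeneity of $x\mapsto Ax+|x|$. The coordinatewise case split on the sign of $b_i$ is a slightly more careful rendering of the paper's choice $\alpha\geq\max_i b_i/(Ax+|x|)_i$, but it is the same idea.
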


\begin{proof}
\quo{If.} 
Let $b\in\R^n$ be arbitrary and let $x\in\R^n$ be such that $Ax+|x|<0$. Define $x^{\alpha}\coloneqq \alpha x$, where $\alpha>0$ is sufficiently large; one can take any $\alpha\geq\max_i\frac{b_i}{(Ax+|x|)_i}$. Then $Ax^{\alpha}+|x^{\alpha}|\leq b$. 

\quo{Only if.} 
Take $b\coloneqq-e$ and let $x$ be a feasible solution to $(A+I_n)x\leq b$, $(A-I_n)x\leq b$. Then $Ax+|x|\leq b=-e<0$.
\end{proof}

Notice that solvability of the system $Ax+|x|<0$ can be easily checked by linear programming since it is equivalent to solvability of $(A+I_n)x\leq -e$, $(A-I_n)x\leq -e$.

Recall that $z\in\mna{M}$ is the greatest element of a set $\mna{M}\subset\R^n$ if $z\geq y$ for every $y\in\mna{M}$. Further recall that $A$ is a Z-matrix if $a_{ij}\leq0$ for any $i\not=j$.

\begin{proposition}\label{propZmatGrEl}
Let $A$ be a Z-matrix and suppose that the feasible set to \nref{minFeasMna} is nonempty and bounded above. Then \nref{minFeasMna} possesses the greatest element, which solves the AVE and it is the unique solution of the linear program
\begin{align*}
\max\ p^Tx \st (A+I_n)x\leq b,\ (A-I_n)x\leq b
\end{align*}
for any $p>0$.
\end{proposition}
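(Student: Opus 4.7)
My approach rests on a lattice-closure property of the feasible set $F=\{x\in\R^n \mmid Ax+|x|\leq b\}$ extracted from the Z-matrix hypothesis. First I would show that $F$ is closed under the componentwise maximum: for $x,y\in F$ and $z_i\coloneqq\max(x_i,y_i)$, fix an index $i$ and assume without loss of generality that $z_i=x_i$. Then $a_{ii}z_i+|z_i|=a_{ii}x_i+|x_i|$, while for each $j\neq i$ the combination $a_{ij}\leq 0$ and $z_j\geq x_j$ gives $a_{ij}z_j\leq a_{ij}x_j$; summing, $(Az)_i+|z_i|\leq (Ax)_i+|x_i|\leq b_i$, so $z\in F$.

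Next I would extract a greatest element. Define $M_i\coloneqq\sup\{x_i\mmid x\in F\}$, which is finite by the boundedness hypothesis, and for each $i$ pick a sequence $x^{(i,k)}\in F$ with $x^{(i,k)}_i\to M_i$. Taking $y^{(k)}\coloneqq\max_i x^{(i,k)}$ componentwise keeps $y^{(k)}\in F$ by iterated application of the lattice property, and the squeeze $x^{(i,k)}_i\leq y^{(k)}_i\leq M_i$ forces $y^{(k)}\to M$. Since $F$ is closed (as the sublevel set of the continuous map $x\mapsto Ax+|x|-b$), we conclude $M\in F$, and by construction $M$ dominates every feasible point.

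To see that $M$ solves the AVE, I would argue by contradiction. Suppose $(AM+|M|)_k<b_k$ for some $k$ and consider $M+te_k$ for small $t>0$. For $i\neq k$ the $i$-th inequality only improves, since $ta_{ik}\leq 0$ and $|M_i|$ is unchanged. For $i=k$ a short case analysis on the sign of $M_k$ (covering $M_k>0$, $M_k=0$, and $M_k<0$ with $t<|M_k|$) shows that $|M_k+t|-|M_k|=\pm t$, so the left-hand side of the $k$-th inequality grows only by the linear amount $t(a_{kk}\pm 1)$, which is absorbed by the strict slack for $t$ sufficiently small. Hence $M+te_k\in F$, contradicting $M_k=\sup\{x_k\mmid x\in F\}$. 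For the LP statement, any feasible $x$ satisfies $x\leq M$ componentwise, hence $p^Tx\leq p^TM$; since $p>0$, equality forces $x_i=M_i$ for every $i$, yielding both optimality and uniqueness of $M$.

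The main obstacle I expect is the coordinate-bumping step: I must verify that $M+te_k$ genuinely lies in $F$ rather than merely approaches it, which requires treating the sign cases of $M_k$ separately, with $M_k=0$ being the case where naive linearisation breaks down. Once this is settled the Z-matrix hypothesis does the heavy lifting: closure under the componentwise maximum produces the greatest element, and that element is automatically a solution of the AVE and the unique optimum of the linear program for every strictly positive $p$.
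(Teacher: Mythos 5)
Your proposal is correct and follows essentially the same route as the paper: closure of the feasible set under componentwise maximum via the Z-matrix sign condition, existence of a greatest element, the coordinate-bump contradiction to show it solves the AVE, and the immediate LP argument from $x\leq M$ and $p>0$. The only difference is that where the paper invokes the greatest/least element theory for Z-matrices from the literature to obtain the greatest element and the LP uniqueness, you supply a self-contained argument (suprema, the lattice property applied to maximizing sequences, and closedness of the feasible set), which is a valid and slightly more elementary filling-in of that step.
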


\begin{proof}
First we show that if both $x,y$ solve \nref{minFeasMna}, then $z\coloneqq\max(x,y)$ solves \nref{minFeasMna}. Let $i\in\seznam{n}$ be arbitrary and suppose without loss of generality that $z_i=x_i$. Then
\begin{align*}
(Az)_i+|z_i|
=a_{ii}x_i+\sum_{j\not=i}a_{ij}z_j+|x_i|
\leq a_{ii}x_i+\sum_{j\not=i}a_{ij}x_j+|x_i|
=(Ax)_i+|x_i|
\leq b_i.
\end{align*}
According to the properties of Z-matrices and the greatest/least element theory \cite{CotPan2009} 
%Thm. 3.11.5
we get that there exists the greatest element $z^*$ of \nref{minFeasMna} and it uniquely solves the corresponding linear program. 

It remains to show that $z^*$ solves the AVE. Suppose to the contrary that there is $i$ such that $(Az^*)_i+|z^*_i|<b_i$. Define $z^0\coloneqq z^*+\eps e_i$, where $\eps>0$ is sufficiently small. Then $z^0$ also satisfies \nref{minFeasMna}, which is in contradiction with the fact that $z^*$ is the greatest element of~\nref{minFeasMna}.
\end{proof}

We now show that the assumptions of Proposition~\ref{propZmatGrEl} are satisfied only when $A+I_n$ is an M-matrix.

\begin{proposition}\label{propZmatMmat}
The following statements are equivalent:
\begin{enumerate}[(1)]
\item\label{propZmatMmat1}
$A+I_n$ is an M-matrix.
\item\label{propZmatMmat2}
$A$ is a Z-matrix and the feasible set to \nref{minFeasMna} is nonempty and bounded above for every $b\geq0$.
\item\label{propZmatMmat3}
$A$ is a Z-matrix and the feasible set to \nref{minFeasMna} is nonempty and bounded above for at least one $b\in\R^n$.
\end{enumerate}
\end{proposition}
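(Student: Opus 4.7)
My plan is to prove the cycle $(\ref{propZmatMmat1})\Rightarrow(\ref{propZmatMmat2})\Rightarrow(\ref{propZmatMmat3})\Rightarrow(\ref{propZmatMmat1})$; the first two implications are immediate, while the third is where all the work lies.

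For $(\ref{propZmatMmat1})\Rightarrow(\ref{propZmatMmat2})$, an M-matrix has nonpositive off-diagonal entries, so $A$ is a Z-matrix, and for any $b\geq 0$ the vector $x^*\coloneqq(A+I_n)^{-1}b$ is nonnegative, satisfies $(A+I_n)x^*=b$ and $(A-I_n)x^*=b-2x^*\leq b$, hence lies in the feasible set. Premultiplying $(A+I_n)x\leq b$ by the nonnegative matrix $(A+I_n)^{-1}$ yields $x\leq(A+I_n)^{-1}b$, giving boundedness above. The implication $(\ref{propZmatMmat2})\Rightarrow(\ref{propZmatMmat3})$ is trivial.

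For $(\ref{propZmatMmat3})\Rightarrow(\ref{propZmatMmat1})$, my first step is to note that the recession cone of~\nref{minFeasMna} equals $R=\{d\mmid (A+I_n)d\leq 0,\ (A-I_n)d\leq 0\}$ and depends only on $A$. Nonemptiness and boundedness above of $F(b)$ for at least one $b$ force $R\subseteq\R^n_{\leq 0}$; otherwise, translating a fixed feasible point along some $d\in R$ with a positive entry would violate any componentwise upper bound. This already gives nonsingularity of $A+I_n$: any nonzero $d$ in its kernel would satisfy $\pm d\in R$. I would then strengthen the recession-cone inclusion to the key implication
\begin{align*}
d\geq 0,\ (A+I_n)d\leq 0\ \Longrightarrow\ d=0,
\end{align*}
which follows from $(A-I_n)d=(A+I_n)d-2d\leq 0$ placing such $d$ in $R\cap\R^n_{\geq 0}=\{0\}$.

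The main obstacle is to upgrade this qualitative kernel-type property into the inverse-nonnegativity $(A+I_n)^{-1}\geq 0$, which together with $A+I_n$ being a nonsingular Z-matrix is the defining M-matrix property. My plan is to fix an arbitrary $j$, set $x\coloneqq(A+I_n)^{-1}e_j$, and decompose $x=x^+-x^-$ into its positive and negative parts with disjoint supports. Using the Z-matrix structure of $A+I_n$ and a short case split on each coordinate $i$ (trivial when $x_i\geq 0$; when $x_i<0$ one has $x_i^+=0$, so $((A+I_n)x^+)_i\leq 0$ by the Z-matrix property, and $((A+I_n)x^-)_i=((A+I_n)x^+)_i-(e_j)_i\leq 0$), one verifies $(A+I_n)x^-\leq 0$; combined with $x^-\geq 0$ and the previous implication, this forces $x^-=0$, so the $j$-th column of $(A+I_n)^{-1}$ is nonnegative. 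Ranging $j$ over $1,\dots,n$ concludes that $A+I_n$ is an M-matrix.
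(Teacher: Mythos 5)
Your overall strategy for $(\ref{propZmatMmat3})\Rightarrow(\ref{propZmatMmat1})$ is viable, but the step where you claim nonsingularity of $A+I_n$ is wrong as stated. If $d\neq 0$ lies in the kernel of $A+I_n$, then $(A-I_n)d=(A+I_n)d-2d=-2d$, so $d\in R$ requires $d\geq0$ and $-d\in R$ requires $d\leq0$; for a kernel vector with components of both signs, neither $d$ nor $-d$ belongs to $R$, and the inclusion $R\subseteq\R^n_{\leq0}$ yields no contradiction. Since your main construction $x\coloneqq(A+I_n)^{-1}e_j$ presupposes invertibility, this is a genuine gap, not a cosmetic slip. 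It is, however, repairable by the very same device you use later: write a kernel vector as $d=d^+-d^-$ with disjoint supports and set $v\coloneqq(A+I_n)d^+=(A+I_n)d^-$; at each index $i$ one of $d_i^+,d_i^-$ vanishes, so $v_i$ is a sum of off-diagonal terms of the Z-matrix $A+I_n$ against a nonnegative vector, whence $v\leq0$, and your key implication forces $d^+=d^-=0$. Alternatively, your key implication says exactly that the Z-matrix $M=A+I_n$ admits no $x\gneqq0$ with $Mx\leq0$, which by LP duality is semipositivity of $M^T$ and hence one of the standard equivalent characterizations of nonsingular M-matrices; invoking that finishes the proof at once.

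With that repaired, the rest checks out: $(\ref{propZmatMmat1})\Rightarrow(\ref{propZmatMmat2})$ is correct (and premultiplying $(A+I_n)x\leq b$ by $(A+I_n)^{-1}\geq0$ is a cleaner boundedness argument than a recession-cone one), the implication $d\geq0,\ (A+I_n)d\leq0\Rightarrow d=0$ is correctly extracted from $R\subseteq\R^n_{\leq 0}$, and the column-by-column positive/negative-part argument for $(A+I_n)^{-1}\geq0$ is valid. Note that your route is genuinely different from the paper's: there, $(\ref{propZmatMmat3})\Rightarrow(\ref{propZmatMmat1})$ is proved by applying the Farkas lemma to the infeasibility of \nref{minFeasMna} augmented with $e^Tx\geq\alpha$, passing to a basic solution of $(A+I_n)^Tu+(A-I_n)^Tv=e$, $u,v\geq0$, and reading off an M-matrix certificate of the form $(A+D_s)^Ty=e$ with $y\geq0$. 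Your argument is more elementary and self-contained, but it must supply the nonsingularity of $A+I_n$ explicitly, which is precisely where it currently breaks.
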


\begin{proof}
\quo{$\nref{propZmatMmat1}\Rightarrow\nref{propZmatMmat2}$} 
First, notice that $A$ is a Z-matrix. Second, for each $b\geq0$, the point $x_b\coloneqq(A+I_n)^{-1}b\geq0$ fulfills~\nref{minFeasMna}. Third, we show that  \nref{minFeasMna} is bounded above. If it is not the case, then there is a nontrivial point $x^*\geq0$ lying in the recession cone $(A+I_n)x\leq 0$, $(A-I_n)x\leq 0$. However, the first inequality implies $x^*\leq (A+I_n)^{-1}0=0$, whence $x^*=0$; a contradiction.

\quo{$\nref{propZmatMmat2}\Rightarrow\nref{propZmatMmat3}$} 
Obvious.

\quo{$\nref{propZmatMmat3}\Rightarrow\nref{propZmatMmat1}$} 
By the assumption, the system
\begin{align*}
(A+I_n)x\leq b,\ (A-I_n)x\leq b,\ e^Tx\geq\alpha
\end{align*}
is infeasible for $\alpha$ large enough. By the Farkas lemma~\cite{Schr1998}, the dual system
\begin{align*}
(A+I_n)^Tu+(A-I_n)^Tv=ew,\ b^Tu+b^Tv<\alpha w,\ u,v,w\geq0
\end{align*}
has a solution $u^*,v^*,w^*$. Since \nref{minFeasMna} is feasible, $w^*\not=0$. Thus we can assume that $w^*=1$, which reduces $u^*,v^*$ to be a solution to
\begin{align}\label{sysPfProofZmatMmat}
(A+I_n)^Tu+(A-I_n)^Tv=e,\ u,v\geq0.
\end{align}
Suppose that $u^*,v^*$ is a basic solution to \nref{sysPfProofZmatMmat}. A basic solution exists since the equations are linearly independent. To see it, suppose that multiplying the system by a vector $r\in\R^n$ yields an equation $0=0$. Then we deduce $(A+I_n)r=(A-I_n)r=0$, from which $r=0$.

If $(u^*)^Tv^*=0$, then the equation in \nref{sysPfProofZmatMmat} reads $(A+D_s)^Ty=e$, where $y=u^*+v^*\geq0$ and $s=\sgn(u^*-v^*)$. This means that $A+D_s$ is an M-matrix and hence $A+I_n$ is an M-matrix as well. 

If $(u^*)^Tv^*\not=0$, then there is $k$ such that $u_k^*>0$ and $v_k^*>0$. Since $u^*,v^*$ is a basic solution, at least $n$ elements vanish in total. Thus, there is also some $i$ such that $u_i^*=v_i^*=0$. The $i$th equation in \nref{sysPfProofZmatMmat} reads
$$
\sum_{j\not=i}a_{ij}u_j^*+\sum_{j\not=i}a_{ij}v_j^*=1.
$$
However, the left-hand side is nonpositive; a contradiction.
\end{proof}

In the following, we use the vector relation $a\gneqq b$ defined as $a\geq b$, $a\not=b$. 
We also employ the Karush--Kuhn--Tucker (KKT) optimality conditions~\cite{BazShe2006}.

\begin{proposition}
Let $A\in\R^{n\times n}$. In problem \nref{minBilin} for any $b\in\R^n$ each KKT point yields a solution to AVE if and only if 
\begin{align}\label{sysKktOptBilin}
|u|\gneqq|A^Tu|
\end{align}
is infeasible.
\end{proposition}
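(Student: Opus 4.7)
\emph{Proof plan.}
I would start by writing down the KKT conditions for~\nref{minBilin}. With a multiplier $u\in\R^n$ for the equality constraint and $\lambda,\mu\geq0$ for $x,y\geq0$, stationarity gives $\lambda=y-(A+I_n)^Tu$ and $\mu=x+(A-I_n)^Tu$, and complementary slackness reads $x^T\lambda=y^T\mu=0$. Setting $v\coloneqq A^Tu$, the KKT system becomes
\begin{align*}
y\geq u+v,\quad x\geq u-v,\quad x^T(y-u-v)=0,\quad y^T(x-u+v)=0,
\end{align*}
together with $x,y\geq0$ and $(A+I_n)x-(A-I_n)y=b$. Expanding the two slackness identities and substituting primal feasibility gives $2x^Ty=u^Tb$, so a KKT point solves AVE precisely when $x^Ty=0$.

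The core of the proof is a componentwise case analysis. At an index~$j$ with $x_j>0$ and $y_j>0$, complementary slackness forces $\lambda_j=\mu_j=0$, so $y_j=u_j+v_j>0$ and $x_j=u_j-v_j>0$, hence $u_j>|v_j|$. In the remaining configurations at least one of $x_j,y_j$ vanishes: the cases $x_j>0,y_j=0$ and $x_j=0,y_j>0$ respectively force $\lambda_j=0$ and $\mu_j=0$, yielding $u_j=-v_j$ or $u_j=v_j$; and $x_j=y_j=0$ leaves $\lambda_j,\mu_j\geq0$, which forces $u_j\leq-|v_j|$. In every case $|u_j|\geq|v_j|$, so at any KKT point $|u|\geq|A^Tu|$ holds componentwise, and $x_jy_j>0$ implies the strict inequality $|u_j|>|A^Tu|_j$.

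The \quo{if} direction now follows immediately: if $|u|\gneqq|A^Tu|$ were infeasible but some KKT point had $x^Ty>0$, then some $j$ would satisfy $x_jy_j>0$, so the associated multiplier~$u$ would have $|u|\geq|A^Tu|$ with strict inequality at~$j$, a contradiction.

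For the \quo{only if} direction I would reverse-engineer a KKT point from a witness $u^*$ of $|u^*|\gneqq|A^Tu^*|$. Pick $i$ with $|u^*_i|>|v^*_i|$, where $v^*\coloneqq A^Tu^*$; since the condition depends only on magnitudes, replacing $u^*$ by $-u^*$ if necessary we may assume $u^*_i>|v^*_i|\geq0$. I then define $x,y$ indexwise: for every $j$ with $u^*_j\geq|v^*_j|$ set $x_j\coloneqq u^*_j-v^*_j$ and $y_j\coloneqq u^*_j+v^*_j$ (both $\geq0$, with $\lambda_j=\mu_j=0$); for every $j$ with $u^*_j\leq-|v^*_j|$ set $x_j=y_j=0$ (so $\lambda_j=-u^*_j-v^*_j\geq0$ and $\mu_j=v^*_j-u^*_j\geq0$). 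Because $|u^*_j|\geq|v^*_j|$ holds for every $j$, at least one of the two rules applies at each coordinate, so $(x,y,u^*,\lambda,\mu)$ is a KKT triple for $b\coloneqq(A+I_n)x-(A-I_n)y$; and at index~$i$, $x_iy_i=(u^*_i)^2-(v^*_i)^2>0$ shows the constructed point fails to solve AVE. The main delicate part is the case analysis of paragraph two and checking the reverse construction at the boundary ties $u^*_j=\pm|v^*_j|$ (in particular when $v^*_j=0$), where both rules can apply and they must give consistent values.
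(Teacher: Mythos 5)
Your proposal is correct and follows essentially the same route as the paper: derive the KKT system for \nref{minBilin}, show that any KKT multiplier satisfies $|u|\geq|A^Tu|$ with strict inequality exactly at indices where $x_jy_j>0$, and conversely reconstruct a KKT point (and a suitable $b$) from a witness of \nref{sysKktOptBilin}. The only differences are cosmetic — an opposite sign convention for the multiplier $u$ (immaterial, since \nref{sysKktOptBilin} is invariant under $u\mapsto-u$), an explicit four-case analysis where the paper argues via complementarity directly, and the unused identity $2x^Ty=u^Tb$.
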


\begin{proof}
The KKT conditions for problem \nref{minBilin} and a feasible point $(x^*,y^*)$ read
\begin{align}\label{sysPfPropKktOptBilin}
y^*+(A+I_n)^Tu=v\geq0,\ 
x^*-(A-I_n)^Tu=w\geq0,\ 
v^Tx^*=w^Ty^*=0.
\end{align}
The point $(x^*,y^*)$ yields a solution of AVE (in particular, it is the point $x^*-y^*$) if and only if for each $i$ we have $x^*_i=0$ or $y^*_i=0$. 

Let $b\in\R^n$ be given and let $(u,v,w)$ satisfy the KKT conditions. For each $i$ we have $v_i=0$ or $x^*_i=0$. The former implies $-((A+I_n)^Tu)_i=y^*_i\geq0$, and the latter implies $-((A-I_n)^Tu)_i=w_i\geq0$. Thus $-u_i\geq (A^Tu)_i$ or $u_i\geq (A^Tu)_i$ for each $i$, whence $|u|\geq A^Tu$. Similarly from $w^Ty^*=0$ we derive $|u|\geq -A^Tu$, implying $|u|\geq |A^Tu|$.

If $(x^*,y^*)$ does not produce a solution of AVE, then there is $i$ such that $x^*_i>0$ and $y^*_i>0$. Thus $v_i=w_i=0$ and from $((A-I_n)^Tu)_i=x^*_i>0$ and $-((A+I_n)^Tu)_i=y^*_i>0$ we derive $|u|_i>|A^Tu|_i$.

Conversely, suppose that there is $u$ solving~\nref{sysKktOptBilin}. Then we easily reconstruct $x^*$, $y^*$, $v$ and $w$ such that they satisfy the KKT conditions \nref{sysPfPropKktOptBilin}.  
We simply put $y^*\coloneqq\max(0,-(A+I_n)^Tu)$, $v\coloneqq y^*+(A+I_n)^Tu\geq0$, $x^*\coloneqq\max(0,(A-I_n)^Tu)$, $w\coloneqq x^*-(A-I_n)^Tu\geq0$. Further, we take $b\coloneqq (A+I_n)x^*-(A-I_n)y^*$ in order that $(x^*,y^*)$ is feasible. The complementarity conditions will be satisfied, too. Suppose to the contrary there is $i$ such that $v_ix_i^*>0$. Then $y^*_i=w_i=0$, $(A+I_n)^Tu)_i>0$ and $(A-I_n)^Tu)_i>0$. Thus $(A^Tu)_i>|u_i|$, which contradicts~\nref{sysKktOptBilin}.

By the assumption there is $i$ such that $|u|_i>|A^Tu|_i$. By a suitable scaling of $u$ we can assume that $-u_i>|A^Tu|_i$. Then $v_i=w_i=0$, $x^*_i>0$ and $y^*_i>0$. Therefore, $(x^*)^Ty^*\not=0$, and $(x^*,y^*)$ yields no solution of AVE.
\end{proof}

Now, we show that checking \nref{sysKktOptBilin} is intractable.

\begin{proposition}
It is NP-hard to check solvability of \nref{sysKktOptBilin} on a set of rational matrices.
\end{proposition}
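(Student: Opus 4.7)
The plan is to reduce from the subset-sum variant already used elsewhere in the paper: given natural numbers $v_1,\dots,v_n$, decide whether there exists $s\in\{\pm 1\}^n$ with $v^Ts=-1$ (this is the NP-hard problem exploited in the proof of Proposition~\ref{propBoundNp}). From such an instance, I would construct in polynomial time a rational square matrix~$A$ of size $n+1$ (or $n+k$ for some small constant~$k$) whose block structure couples the first $n$ coordinates of the variable~$u$, which play the role of~$s$, with the auxiliary coordinate through rank-one blocks parametrised by~$v$. A natural candidate has the form $A^T=\bigl(\begin{smallmatrix}\alpha I_n & \beta v\\ \gamma v^T & \delta\end{smallmatrix}\bigr)$ with scalars $\alpha,\beta,\gamma,\delta$ tuned so that the inequalities $|u_i|\geq|(A^Tu)_i|$ for $i\leq n$ force $|u_i|$ to take a common value, while the inequality in the auxiliary coordinate couples that common value to the scalar quantity $v^Tu_{1:n}$.

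The forward direction would be direct: given $s$ with $v^Ts=-1$, put $u_{1:n}:=s$ and pick the auxiliary coordinate so that the identity $v^Ts=-1$ makes the componentwise weak inequality tight on the first $n$ coordinates but strict on the auxiliary one, witnessing \nref{sysKktOptBilin}. The converse direction is the heart of the argument: given a gneqq-witness~$u$, I would perform a case analysis over the orthant of~$u$ in the spirit of Proposition~\ref{propDecomp}. Within each orthant the condition $|u|\geq|A^Tu|$ becomes a linear system, and the designed coupling forces the first $n$ coordinates of $u$ to have a common absolute value; after normalising, $u_{1:n}\in\{\pm 1\}^n$, and the strict inequality on the auxiliary coordinate then translates precisely into the linear condition $v^Tu_{1:n}=-1$, yielding the required subset-sum solution.

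The main obstacle is the precise calibration of $\alpha,\beta,\gamma,\delta$. The gneqq condition combines a componentwise weak inequality with a strict inequality in at least one coordinate, and these two requirements pull in opposite directions: a loose coupling between the $u_{1:n}$-block and the auxiliary coordinate lets one scale the auxiliary coordinate freely, manufacturing strictness for free and making the system feasible irrespective of the subset-sum instance; an overly rigid coupling instead forces $|u|=|A^Tu|$ everywhere, ruling out any strict coordinate. The construction has to land exactly on the boundary between these two regimes. A secondary technical difficulty is ruling out degenerate witnesses in which some coordinates of $u$ vanish or the sign pattern of the nonzero entries is unrelated to the subset-sum solution; this requires inspecting each such subcase and checking that no strict inequality is achievable there without simultaneously violating the weak one.
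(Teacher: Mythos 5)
There is a genuine gap: your argument is a plan for a reduction, not a reduction. The entire burden of the proof lies in exhibiting a concrete rational matrix $A$ for which $|u|\gneqq|A^Tu|$ is solvable if and only if the subset-sum instance has a solution $s\in\{\pm1\}^n$ with $v^Ts=-1$, and you explicitly leave the calibration of $\alpha,\beta,\gamma,\delta$ open. You correctly identify why this calibration is delicate -- a loose coupling makes \nref{sysKktOptBilin} feasible for every instance, while a rigid one forces $|u|=|A^Tu|$ everywhere and kills the strict coordinate (note $A=I_n$ is exactly such a degenerate case: $|u|\geq|A^Tu|$ holds for all $u$ but never with a strict component) -- but identifying the obstacle is not the same as overcoming it. Without the explicit matrix and a verified converse direction (including the elimination of witnesses with vanishing coordinates, which you also defer), no NP-hardness has been established.

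For comparison, the paper takes a different and shorter route: it reduces from irregularity of an interval matrix $[A\pm I_n]$, which is already known to be NP-hard. By Rohn's characterization~\cite{Roh2009}, irregularity of $[A\pm(1+\eps)I_n]$ is equivalent to the existence of $u\not=0$ with $|u|\geq\frac{1}{1+\eps}|A^Tu|$, and the small rational perturbation $\eps>0$ is precisely what manufactures the strict inequality demanded by $\gneqq$ without any hand-built gadget: if $u\not=0$ satisfies $|u|\geq|A^Tu|$, then $|u|\geq|A^Tu|\geq\frac{1}{1+\eps}|A^Tu|$ holds with at least one strict component because $u\not=0$. This sidesteps exactly the boundary-tuning problem your construction runs into. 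If you want to salvage your subset-sum route, the cleanest fix is to compose the two known reductions rather than build a new one: Proposition~\ref{propBoundNp} already shows that irregularity of $[ev^T\pm I_n]$ encodes subset sum, so applying the paper's $\eps$-scaling to the rank-one matrix $ev^T$ would give you the statement with the concrete combinatorial source you wanted -- but the $\eps$-perturbation step, or some substitute for it, cannot be omitted.
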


\begin{proof}
We use a reduction from irregularity of an interval matrix $[A\pm I_n]$. Notice that its irregularity is equivalent to irregularity of $[A\pm (1+\eps)I_n]$, where $\eps>0$ is sufficiently small but the size of $\eps$ is polynomial (the set of nonsingular matrices is open). By \cite{Roh2009}, $[A\pm (1+\eps)I_n]$ is irregular if and only if there is $u\not=0$ such that 
%$(1+\eps)|u|\geq|A^Tu|$
\begin{align}\label{sysKktOptBilinNPhard}
|u|\geq \frac{1}{1+\eps} |A^Tu|.
\end{align}
Now, if \nref{sysKktOptBilinNPhard} holds with at least one strict inequality, then $[A\pm I_n]$ is irregular. Conversely, let $[A\pm I_n]$ be irregular. Then there is $u\not=0$ such that 
$$
|u|\geq|A^Tu|\geq \frac{1}{1+\eps} |A^Tu|.
$$  
In view of $u\not=0$, there is at least one strict inequality.
\end{proof}

The proof of the above observation shows that condition \nref{sysKktOptBilin} is closely related to regularity of the interval matrix $[A\pm I_n]$. However, it is not exactly the same. Regularity of $[A\pm I_n]$ implies infeasibility of \nref{sysKktOptBilin}, but not conversely (as a counterexample take $A=I_n$). As a consequence, any efficiently recognizable subclass of regular interval matrices (as discussed at the beginning of Section~\ref{sSolv}) serves as a sufficient condition to~\nref{sysKktOptBilin}.

%%%%%%%%%%%%%%%%%%%%%%%%%%%%%%%%%%%%%%%%%%%%%%%%%%%%%%%%%%%%%%% 
\section{Other auxiliary optimization problems}

There are other optimization problems used to solve AVE. An auxiliary linear program has often the form of \cite{ZamHla2021a}
\begin{align}
\label{minLP}
\min\ e^T(b-(A+D)x) \st (A+I_n)x\leq b,\ (A-I_n)x\leq b
\end{align}
for certain $D:|D|=I_n$.

Below, we formulate the results for the domain $D\in[-I_n,I_n]$, but analogously the results hold for the set of $D$ such that $|D|=I_n$. 
We use $\neg i$ to denote the index set $\seznam{n}\setminus\{i\}$.

\begin{proposition}
The following two statements are equivalent:
\begin{enumerate}[(1)]
\item\label{itPropAuxFeasOpt1}
For every $D\in[-I_n,I_n]$ and $b\in\R^n$, feasibility of \nref{minLP} implies that the optimum of \nref{minLP} solves AVE.
\item\label{itPropAuxFeasOpt2}
For every $D\in[-I_n,I_n]$ and $i\in\seznam{n}$ there is a solution to the system
\begin{align}
\label{sysPropAuxFeasOpt}
(Ax+|x|)_{\neg i} \leq 0,\ e^T(A+D)x>0.
\end{align}
\end{enumerate}
\end{proposition}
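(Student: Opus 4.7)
The plan is to prove both implications by contraposition, leveraging the following structural observation recorded first. For every $D\in[-I_n,I_n]$ and every $x\in F(b):=\{x\mmid (A+I_n)x\leq b,\ (A-I_n)x\leq b\}$, the componentwise inequality $b-(A+D)x\geq |x|-Dx\geq 0$ holds, since the two feasibility constraints give $b-Ax\geq|x|$ while $|D_{ii}|\leq 1$ gives $Dx\leq|x|$. Hence the objective of \nref{minLP} is nonnegative on $F(b)$, and furthermore, an optimum $x^*$ of \nref{minLP} fails to solve AVE exactly when there is some index $i_0$ with $(Ax^*)_{i_0}+|x^*_{i_0}|<b_{i_0}$; at every such index \emph{both} one-sided constraints $(A+I_n)_{i_0}x^*<b_{i_0}$ and $(A-I_n)_{i_0}x^*<b_{i_0}$ are strictly slack, because their maximum equals $(Ax^*)_{i_0}+|x^*_{i_0}|$.

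For \nref{itPropAuxFeasOpt2}$\Rightarrow$\nref{itPropAuxFeasOpt1}, assume \nref{itPropAuxFeasOpt1} fails; pick $D_0\in[-I_n,I_n]$, $b_0\in\R^n$, and an LP optimum $x^*$ at $(D_0,b_0)$ that does not solve AVE, and fix an index $i_0$ as above. For any $y$ satisfying $(Ay+|y|)_{\neg i_0}\leq 0$, both $(A\pm I_n)_j y\leq 0$ for $j\neq i_0$, so the perturbation $x^*+\varepsilon y$ preserves the constraints indexed by $j\neq i_0$ for every $\varepsilon>0$; the strict slackness at $i_0$ ensures that the two $i_0$-constraints also remain satisfied for all sufficiently small $\varepsilon>0$. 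Optimality of $x^*$ applied to $x^*+\varepsilon y\in F(b_0)$ then yields $e^T(A+D_0)(x^*+\varepsilon y)\leq e^T(A+D_0)x^*$, i.e.\ $e^T(A+D_0)y\leq 0$, which refutes \nref{itPropAuxFeasOpt2} at the pair $(D_0,i_0)$.

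For \nref{itPropAuxFeasOpt1}$\Rightarrow$\nref{itPropAuxFeasOpt2}, again contrapositively, let $(D_0,i_0)$ witness the failure of \nref{itPropAuxFeasOpt2}, so that $e^T(A+D_0)y\leq 0$ for every $y$ with $(Ay+|y|)_{\neg i_0}\leq 0$. Choose $b_1\coloneqq \alpha e_{i_0}$ for any $\alpha>0$. Each $x\in F(b_1)$ automatically satisfies $(A\pm I_n)_j x\leq 0$ for $j\neq i_0$ and thus $(Ax+|x|)_{\neg i_0}\leq 0$, so the hypothesis gives $e^T(A+D_0)x\leq 0$. Consequently $x=0\in F(b_1)$ maximizes $e^T(A+D_0)x$ over $F(b_1)$, hence it is an optimal solution of \nref{minLP} at $(D_0,b_1)$. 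But $A\cdot 0+|0|=0\neq \alpha e_{i_0}=b_1$, so this optimum violates AVE, refuting \nref{itPropAuxFeasOpt1}.

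The main obstacle is the structural observation in the first paragraph: once one knows that a non-AVE optimum must have \emph{both} one-sided $i_0$-constraints strictly slack (not merely one of them), the cone $\{y\mmid (Ay+|y|)_{\neg i_0}\leq 0\}$ of admissible perturbations aligns exactly with the hypothesis of \nref{itPropAuxFeasOpt2}, and both implications fall out. The reverse construction becomes transparent once one guesses the test right-hand side $b_1=\alpha e_{i_0}$ supported precisely on the failing coordinate.
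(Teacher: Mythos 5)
Your proof is correct. The direction \nref{itPropAuxFeasOpt1}$\Rightarrow$\nref{itPropAuxFeasOpt2} is essentially the paper's argument: take $b$ supported on the failing coordinate $i_0$, observe that every feasible point then satisfies $(Ax+|x|)_{\neg i_0}\leq 0$, and conclude that $x=0$ is an optimum not solving AVE. The other direction, however, is where you genuinely diverge. The paper proves \nref{itPropAuxFeasOpt2}$\Rightarrow$\nref{itPropAuxFeasOpt1} through LP duality: it applies the Farkas lemma to the system \nref{sysPropAuxFeasOpt}, deduces that every feasible solution $(u,v)$ of the dual of \nref{minLP} satisfies $u+v>0$, and then reads off $Ax^*+|x^*|=b$ from complementary slackness. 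You instead argue contrapositively and entirely on the primal side: at a non-AVE optimum $x^*$ both one-sided constraints at the slack index $i_0$ are strictly inactive (your key structural observation that their maximum is $(Ax^*)_{i_0}+|x^*_{i_0}|$), so every $y$ in the cone $\{y\mmid (Ay+|y|)_{\neg i_0}\leq 0\}$ is a feasible direction, and optimality forces $e^T(A+D_0)y\leq 0$ along all of them, which is exactly the negation of \nref{itPropAuxFeasOpt2}. Your route is more elementary (no duality, no Farkas), and it makes transparent the geometric meaning of \nref{sysPropAuxFeasOpt} as a guaranteed improving direction whenever some equation of the AVE is slack; the paper's route is less self-contained but yields the extra structural fact that all dual feasible points satisfy $u+v>0$, which is of independent interest. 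Both arguments implicitly use that the objective of \nref{minLP} is bounded below by $0$ on the feasible set, which you state and justify explicitly; this guarantees that an optimum exists whenever the LP is feasible, so the phrase \quo{the optimum} in \nref{itPropAuxFeasOpt1} is well posed.
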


\begin{proof}
\quo{$\nref{itPropAuxFeasOpt1}\Rightarrow\nref{itPropAuxFeasOpt2}$} 
Suppose that \nref{sysPropAuxFeasOpt} is infeasible for some $D\in[-I_n,I_n]$ and $i\in\seznam{n}$. Define $b\coloneqq e_i$. Then $x^*\coloneqq 0$ is obviously a feasible solution to \nref{minLP}. In view of infeasibility of \nref{sysPropAuxFeasOpt}, $x^*$ is an optimum to \nref{minLP} since there is no improving direction. However, $x^*$ does not solve AVE since $Ax^*+|x^*|=0\not=b$.

\quo{$\nref{itPropAuxFeasOpt2}\Rightarrow\nref{itPropAuxFeasOpt1}$} 
From the assumption and by the Farkas lemma~\cite{Schr1998}, the system
\begin{align*}
((A+I_n)_{\neg i,1:n})^T\tilde{u}+((A-I_n)_{\neg i,1:n})^T\tilde{v}=(A+D)^Te,\ 
\tilde{u},\tilde{v}\geq0
\end{align*}
is infeasible for each $D\in[-I_n,I_n]$ and $i\in\seznam{n}$. That is why for the dual problem to \nref{minLP},
\begin{align*}
%\min\ e^T(b-(A+D)x) \st (A+I_n)x\leq b,\ (A-I_n)x\leq b
\max\ -b^T(u+v) \st (A+I_n)^Tu+(A-I_n)^Tv = (A+D)^Te,\ u,v\geq0,
\end{align*}
we can claim that each feasible solution $u,v$ satisfies $u+v>0$ (that is, $u_i+v_i>0$ for each~$i$). The primal problem \nref{minLP} has an optimal solution since it is feasible and the objective function is linear and bounded below. For a primal optimum $x^*$ and a dual optimum $(u^*,v^*)$ the optimality conditions are true,
$$
((A+I_n)x^*-b)^Tu^*=((A-I_n)x^*-b)^Tv^*=0.
$$
Since $u^*+v^*>0$, we have for each $i$ that $((A+I_n)x^*)_i=b_i$ or $((A-I_n)x^*)_i=b_i$, giving raise to $(Ax^*+|x^*|)_i=b_i$. Therefore $Ax^*+|x^*|=b$.
\end{proof}

Despite the fact that condition \nref{sysPropAuxFeasOpt} can be checked by linear programming, it is desirable to have some matrix classes that satisfy it automatically.

Let $\umace{A}$ and $\omace{A}$ be symmetric. We say that an interval matrix $[\umace{A},\omace{A}]$ is \emph{positive definite} if every symmetric $A\in[\umace{A},\omace{A}]$ is positive definite. Checking positive definiteness of an interval matrix is co-NP-hard \cite{Roh1994}, but there are known various sufficient conditions \cite{Hla2018a,Hla2020a,Roh1994b}.

\begin{proposition}\label{propAuxFeasOptPD}
Let $A\in\R^{n\times n}$ be symmetric. Condition \nref{sysPropAuxFeasOpt} holds true provided $[A\pm ee^T]$ is positive definite.
\end{proposition}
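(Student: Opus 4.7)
The plan is to argue by contradiction via LP duality. Suppose \nref{sysPropAuxFeasOpt} fails for some $D\in[-I_n,I_n]$ and some $i\in\seznam{n}$. The first step is to rewrite $(Ax+|x|)_{\neg i}\leq 0$ as the pair of linear inequalities $(A+I_n)_{j*}x\leq 0$ and $(A-I_n)_{j*}x\leq 0$ for $j\neq i$, turning the system into a standard LP feasibility question. Applying LP duality (or the Farkas lemma) to the LP $\max e^T(A+D)x$ over this polyhedral cone, infeasibility is equivalent to $(A+D)^Te$ lying in the conic hull of the row vectors $(A\pm I_n)_{j*}^T$ for $j\neq i$; introducing $\tilde w_j\coloneqq u_j+v_j$ and $\tilde s_j\coloneqq u_j-v_j$ from the dual multipliers (and $\tilde w_i=\tilde s_i=0$), and using $A=A^T$, this reads
\begin{align*}
A(\tilde w-e)=De-\tilde s,\quad \tilde w\geq 0,\ \tilde w_i=0,\ |\tilde s_j|\leq \tilde w_j\ (j\neq i),\ \tilde s_i=0.
\end{align*}

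Set $v\coloneqq \tilde w-e$, so $v_i=-1$ (hence $v\neq 0$) and $v_j\geq-1$ for $j\neq i$. The key move is to exhibit a symmetric matrix $B\in[A\pm ee^T]$ that forces a contradiction along $v$, namely $B\coloneqq A-\sgn(v)\sgn(v)^T$. Since $-\sgn(v)\sgn(v)^T$ has entries in $\{-1,0,1\}$, it lies entrywise in $[-ee^T,ee^T]$, so $B$ is symmetric and in $[A\pm ee^T]$, hence positive definite by assumption. A direct computation gives $v^TBv=v^TAv-\|v\|_1^2$, so positive definiteness yields $v^TAv>\|v\|_1^2$.

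On the other hand, premultiplying $Av=De-\tilde s$ by $v^T$ and bounding termwise gives
\begin{align*}
v^TAv=v^TDe-v^T\tilde s\leq \|v\|_1+\sum_{j\neq i}|v_j|(1+v_j)\leq \|v\|_1+\sum_{j\neq i}(|v_j|+v_j^2)=2\|v\|_1+\|v\|_2^2-2,
\end{align*}
using $|De|\leq e$, $|\tilde s_j|\leq \tilde w_j=1+v_j$, the elementary inequality $|v_j|(1+v_j)\leq |v_j|+v_j^2$ valid for $v_j\geq -1$, and $|v_i|=v_i^2=1$. The Cauchy--Schwarz-type bound $(\sum_{j\neq i}|v_j|)^2\geq \sum_{j\neq i}v_j^2$ then rearranges to $\|v\|_1^2\geq 2\|v\|_1+\|v\|_2^2-2$, closing the chain $\|v\|_1^2<v^TAv\leq\|v\|_1^2$ and producing a contradiction.

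The main obstacle is spotting the correct adversarial matrix: $-\sgn(v)\sgn(v)^T$ is the extremal choice that minimizes $v^TMv$ over symmetric $M$ with $|M|\leq ee^T$ (attaining $-\|v\|_1^2$), and the constraint $v_i=-1$ forced by the excluded index contributes precisely the $+1$ needed to close the $\ell_1/\ell_2$ estimate via $(\|v\|_1-1)^2+1\geq \|v\|_2^2$. Everything else is routine LP duality and the triangle inequality.
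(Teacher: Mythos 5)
Your proof is correct, but it takes a genuinely different route from the paper. The paper argues \emph{constructively}: it reduces \nref{sysPropAuxFeasOpt} to the linearized systems $((A+D')x)_{\neg i}\leq0$, $e^T(A+D)x>0$ for all diagonal $D'\in[-I_n,I_n]$, exhibits the explicit candidate $x=(\tilde{x}^T,1)^T$ with $\tilde{x}=-(\tilde{A}+\tilde{D}')^{-1}a$ (so the $\neg i$ inequalities hold with equality), and verifies $e^T(A+D)x>0$ by recognizing that quantity as bounded below by a Schur complement of a symmetric matrix in $[A\pm ee^T]$. You instead argue by contradiction: the homogeneous Farkas lemma converts infeasibility into a dual certificate $A(\tilde w-e)=De-\tilde s$ with $\tilde w\geq0$, $|\tilde s|\leq\tilde w$, $\tilde w_i=\tilde s_i=0$, and you refute it by testing positive definiteness of the single adversarial member $A-\sgn(v)\sgn(v)^T\in[A\pm ee^T]$ against $v=\tilde w-e\neq0$, closing the argument with the elementary estimates $|v_j|(1+v_j)\leq|v_j|+v_j^2$ and $\bigl(\sum_{j\neq i}|v_j|\bigr)^2\geq\sum_{j\neq i}v_j^2$; I checked the chain $\|v\|_1^2<v^TAv\leq2\|v\|_1+\|v\|_2^2-2\leq\|v\|_1^2$ and it is sound. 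The paper's route buys an explicit feasible point (useful algorithmically, since the whole point of the proposition is that the linear program \nref{minLP} finds the AVE solution) and isolates exactly which Schur complement must be positive; your route is more self-contained (it does not need the cited equivalence with the family of linearized systems, only the direct rewriting of $(Ax+|x|)_{\neg i}\leq0$ as two linear inequalities) and makes transparent that positive definiteness is only ever invoked on the rank-one perturbations $A-ss^T$ with $s\in\{-1,0,1\}^n$, which by Rohn's vertex characterization is in fact equivalent to positive definiteness of $[A\pm ee^T]$, so no generality is gained or lost. The only cosmetic blemish is the reuse of the symbol $v$ for both a dual multiplier and the vector $\tilde w-e$.
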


\begin{proof}
Let $D\in[-I_n,I_n]$ and suppose without loss of generality that $i=n$. 
For every $D'\in[-I_n,I_n]$, we find a solution to the system
\begin{align*}
((A+D')x)_{\neg n} \leq 0,\ e^T(A+D)x>0;
\end{align*}
by \cite{Hla2013b,RohKre1994}, it is equivalent to having a solution to \nref{sysPropAuxFeasOpt}.

Let the matrices have the structure
\begin{align*}
A=\begin{pmatrix}\tilde{A} & a\\a^T & c\end{pmatrix},\ \ 
D'=\begin{pmatrix}\tilde{D}' & 0\\0^T & \gamma\end{pmatrix},\ \ 
D=\begin{pmatrix}\tilde{D} & 0\\0^T & -1\end{pmatrix}. 
\end{align*}
Notice that above we can assume that $D_{nn}=-1$ since we will construct a solution $x$ such that $x_n=1$. Indeed, we seek for a solution in the form $x=(\tilde{x}^T,1)^T$. To satisfy the inequalities $((A+D')x)_{\neg n} \leq 0$ as equations, we put $\tilde{x}\coloneqq-(\tilde{A}+\tilde{D}')^{-1}a$. Thus it remains to show that $x$ satisfies the remaining inequality $e^T(A+D)x>0$. Substituting for $x$, we get
\begin{align*}
e^T(A+D)x
&=(e^T,1) 
 \begin{pmatrix}(\tilde{A}+\tilde{D}')+(\tilde{D}-\tilde{D}') & a\\a^T & c-1\end{pmatrix}
 \begin{pmatrix}-(\tilde{A}+\tilde{D}')^{-1}a \\ 1\end{pmatrix} \\
&= -e^T(\tilde{D}-\tilde{D}') (\tilde{A}+\tilde{D}')^{-1}a
   -a^T(\tilde{A}+\tilde{D}')^{-1}a + c-1
\end{align*}
Denote $d\coloneqq \frac{1}{2}(\tilde{D}-\tilde{D}')e$. Then the above expression reads
\begin{align*}
e^T(A+D)x
& =-(a+d)^T (\tilde{A}+\tilde{D}')^{-1}(a+d) + d^T(\tilde{A}+\tilde{D}')^{-1}d + c-1\\
&\geq -(a+d)^T (\tilde{A}+\tilde{D}')^{-1}(a+d) + c-1\\
&>0.
\end{align*}
The last inequality follows from positive definiteness of $[A\pm ee^T]$ since the value $-(a+d)^T (\tilde{A}+\tilde{D}')^{-1}(a+d) + c-1$ is in fact the Schur complement of 
\begin{align*}
\begin{pmatrix}\tilde{A}+\tilde{D}' & a+d\\(a+d)^T & c-1\end{pmatrix}
\in[A\pm ee^T].
%\tag*{\qedhere}
\end{align*}
\end{proof}

Notice that positive definiteness of $[A\pm ee^T]$ already implies its regularity and thus unique solvability, however, the solution need not be easy to compute in general. In our case, the linear program \nref{minLP} does the job.

It is also worth mentioning that the class of matrices satisfying \nref{sysPropAuxFeasOpt} is closed under multiplication of any column by $-1$; one just uses the substitution $x_i\equiv-x_i$. As a consequence, with any $A$ the class also contains $-A$. Thus  \nref{sysPropAuxFeasOpt} holds true when $[A\pm ee^T]$ is negative definite, too.

An interval matrix $[\umace{A},\omace{A}]$ is \emph{an M-matrix} if every $A\in[\umace{A},\omace{A}]$ is an M-matrix, that is, the off-diagonal entries are nonpositive and $A^{-1}\geq0$. Checking M-matrix property is easy since $[\umace{A},\omace{A}]$ is an M-matrix if and only if $\umace{A}$ is an M-matrix and $\omace{A}_{ij}\leq0$ for $i\not=j$; see \cite{BarNud1974}. 

\begin{proposition}
Let $A\in\R^{n\times n}$ be symmetric. Condition \nref{sysPropAuxFeasOpt} holds true provided 
$[A\pm(2ee^T-I_n)]$ is an M-matrix.
\end{proposition}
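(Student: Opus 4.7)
The plan is to adapt the proof of Proposition~\ref{propAuxFeasOptPD} more or less verbatim, substituting the M-matrix hypothesis wherever positive definiteness of $[A\pm ee^T]$ was used there. By the same reduction via \cite{Hla2013b,RohKre1994}, it is enough to exhibit, for every $D\in[-I_n,I_n]$ and every $i$ (WLOG $i=n$ and $D_{nn}=-1$) and every $D'\in[-I_n,I_n]$, a vector $x$ with $((A+D')x)_{\neg n}\leq 0$ and $e^T(A+D)x>0$. Using the block partition $A=\begin{pmatrix}\tilde{A}&a\\a^T&c\end{pmatrix}$ and analogous ones for $D,D'$, I would set $x=(\tilde{x}^T,1)^T$ and $\tilde{x}\coloneqq-(\tilde{A}+\tilde{D}')^{-1}a$, which turns the first $n-1$ inequalities into equalities.

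To make this well-defined one needs $\tilde{A}+\tilde{D}'$ invertible, and here the M-matrix hypothesis enters for the first time: deleting the last row and column of $[A\pm(2ee^T-I_n)]$ leaves $[\tilde{A}\pm(2\tilde{e}\tilde{e}^T-I_{n-1})]$, which is again an M-matrix interval since both of Barth and Nuding's characterizing conditions (lower endpoint an M-matrix, upper endpoint with nonpositive off-diagonals) are inherited by principal submatrices. As $\tilde{A}+\tilde{D}'$ lies in this interval, it is a symmetric M-matrix, hence positive definite.

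Repeating the algebraic calculation of $e^T(A+D)x$ from Proposition~\ref{propAuxFeasOptPD} word for word (only the justifications differ), with $d\coloneqq\tfrac{1}{2}(\tilde{D}-\tilde{D}')\tilde{e}$ and $M\coloneqq(\tilde{A}+\tilde{D}')^{-1}$, one arrives at
\begin{align*}
e^T(A+D)x=-(a+d)^TM(a+d)+d^TMd+c-1.
\end{align*}
Since $M\succ 0$ we have $d^TMd\geq 0$, so it is enough to show that the Schur complement $c-1-(a+d)^TM(a+d)$ is strictly positive.

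The decisive step, which I expect to be the main obstacle, is to observe that the bordered matrix
\begin{align*}
\begin{pmatrix}\tilde{A}+\tilde{D}'&a+d\\(a+d)^T&c-1\end{pmatrix}=A+\begin{pmatrix}\tilde{D}'&d\\d^T&-1\end{pmatrix}
\end{align*}
itself lies in $[A\pm(2ee^T-I_n)]$: the perturbation has diagonal entries in $[-1,1]$ (the $\tilde{D}'_{ii}$'s and the fixed $-1$), off-diagonal entries $0$ inside the $\tilde{D}'$ block, and border entries $d_i\in[-1,1]$, all of which fit within the radii $1$ on the diagonal and $2$ off-diagonal that together make up $2ee^T-I_n$. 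Consequently the bordered matrix is a symmetric M-matrix, in particular positive definite, and its Schur complement is positive, so $e^T(A+D)x>0$. The subtle point is that the off-diagonal radius $2$ in $2ee^T-I_n$ (as opposed to the diagonal radius $1$) is exactly what allows the border vector $d$ to fit; this is precisely what dictates the unsymmetric form of the perturbation in the hypothesis.
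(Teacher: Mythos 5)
Your proof is correct, and it follows the paper's overall template (the same construction of $x$, the same reduction, and the recognition of $e^T(A+D)x$ as a Schur complement of a member of the interval), but the decisive step is carried out differently. The paper does \emph{not} complete the square: writing $A=\bigl(\begin{smallmatrix}\tilde{A} & a\\ g^T & c\end{smallmatrix}\bigr)$, it leaves the expression in the asymmetric form $-(g+2d)^T(\tilde{A}+\tilde{D}')^{-1}a+c-1$ and identifies it as the Schur complement of the \emph{non-symmetric} matrix $\bigl(\begin{smallmatrix}\tilde{A}+\tilde{D}' & a\\ (g+2d)^T & c-1\end{smallmatrix}\bigr)$, where the entire perturbation $2d$ (with entries in $[-2,2]$) sits in a single off-diagonal border; it then invokes the fact that Schur complements of M-matrices are M-matrices, with no appeal to symmetry or positive definiteness. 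Your version keeps the symmetric completed-square form, discards $d^TMd\geq0$, and borders symmetrically by $a+d$. Both arguments are valid, and the comparison is instructive: your route needs symmetry of $A$ (to complete the square and to get $M\succ0$ from the symmetric M-matrix property), whereas the paper's route would work verbatim for non-symmetric $A$. Conversely, your closing remark is not quite right: since $d_i\in[-1,1]$, your symmetric bordered matrix already lies in the smaller interval $[A\pm ee^T]$, so the off-diagonal radius $2$ is \emph{not} what your construction needs --- in fact your argument proves the (formally stronger) statement that, for symmetric $A$, it suffices for $[A\pm ee^T]$ to be an M-matrix. The radius $2$ off the diagonal is dictated by the paper's asymmetric placement of $g+2d$ in one border, which is the price paid for avoiding symmetry.
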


\begin{proof}
We proceed similarly as in the proof of Proposition~\ref{propAuxFeasOptPD} and we use the same notation. However, matrix $A$ is now structured as
\begin{align*}
A=\begin{pmatrix}\tilde{A} & a\\g^T & c\end{pmatrix}
\end{align*}
and the expression $e^T(A+D)x$ is estimated from below as follows
\begin{align*}
e^T(A+D)x
 = -(g+2d)^T (\tilde{A}+\tilde{D}')^{-1}a  + c-1 
 > 0.
\end{align*}
The last inequality follows from the M-matrix property since the value $-(g+2d)^T (\tilde{A}+\tilde{D}')^{-1}a  + c-1$ is the Schur complement of 
\begin{align*}
\begin{pmatrix}\tilde{A}+\tilde{D}' & a\\(g+2d)^T & c-1\end{pmatrix}
%\in[A-2ee^T+I_n,\, A+2ee^T],
\in[A\pm(2ee^T-I_n)],
\end{align*}
so it is also an M-matrix \cite{HorJoh1991}.
\end{proof}

%%%%%%%%%%%%%%%%%%%%%%%%%%%%%%%%%%%%%%%%%%%%%%%%%%%%%%%%%%%%%%% 
\section{Conclusion}

As Cottle~\cite{Cot2010} writes, \quo{there are more than 50 matrix classes discussed in the literature of the Linear Complementarity Problem}. In contrast, the matrix classes associated with the absolute value equations have not been thoroughly investigated so far. The purpose of this paper was to change it and to address this issue. We introduced (without explicitly mentioning it) the matrix classes corresponding to convexity, boundedness, connectedness, finiteness, nonnegativity and other properties of the solution set. Other matrix classes are related to optimality conditions of optimization reformulations.

We were able to characterize some of the matrix classes and decide on the computational complexity of the corresponding recognition problem. There are, however, several questions left as open problems.

%%%%%%%%%%%%%%%%%%%%%%%%%%%%%%%%%%%%%%%%%%%%%%%%%%%%%%%%%%%%%%% 
% REFERENCES
%%%%%%%%%%%%%%%%%%%%%%%%%%%%%%%%%%%%%%%%%%%%%%%%%%%%%%%%%%%%%%% 

\bibliographystyle{abbrv}
\bibliography{mat_class_ave}

\end{document}